\newtheorem{thm}{Theorem}[section]
\newtheorem{theorem}[thm]{Theorem}
\newtheorem{remark}[thm]{Remark}
\newtheorem{question}[thm]{Question}
\newtheorem{lemma}[thm]{Lemma}
\newtheorem{proposition}[thm]{Proposition}
\newtheorem{example}[thm]{Example}
\theoremstyle{definition}
\numberwithin{equation}{section}
\def\p{\partial}
\def\R{\mathbb{R}}
\begin{document}

	\title{Sharp Asymptotic Stability of Blasius Profile in the Steady Prandtl Equation}
	
	\author{Hao Jia \footnotemark[1]\ \footnotemark[3] \and Zhen Lei \footnotemark[2]\ \footnotemark[4]\and Cheng Yuan  \footnotemark[2]\ \footnotemark[5]}
	\renewcommand{\thefootnote}{\fnsymbol{footnote}}
	\footnotetext[1]{University of Minnesota.}
	\footnotetext[2]{School of Mathematical Sciences; LMNS and Shanghai Key Laboratory for Contemporary Applied Mathematics, Fudan University, Shanghai 200433, P. R.China.}
	\footnotetext[3]{Email: \url{jia@umn.edu}}
	 \footnotetext[4]{Email: \url{zlei@fudan.edu.cn}}
	\footnotetext[5]{Email: \url{cyuan22@m.fudan.edu.cn}}
	
	\date{\today}
	
	\maketitle
\begin{abstract}
This work presents an asymptotic stability result concerning the self-similar Blasius profiles $[\bar{u}, \bar{v}]$ of the stationary Prandtl boundary layer equation. Initially demonstrated by Serrin \cite{MR0282585}, the profiles $[\bar{u}, \bar{v}]$ were shown to act as a self-similar attractor of solutions $[u, v]$ to the Prandtl equation through the use of von Mises transform and maximal principle techniques. Specifically, as $x \to \infty$, $\|u - \bar{u}\|_{L^{\infty}_{y}} \to 0$. Iyer \cite{MR4097332} employed refined energy methods to derive an explicit convergence rate for initial data close to Blasius. Wang and Zhang \cite{MR4657422} utilized barrier function methods, removing smallness assumptions but imposing stronger asymptotic conditions on the initial data. It was suggested that the optimal convergence rate should be $\|u-\bar{u}\|_{L^{\infty}_{y}}\lesssim (x+1)^{-\frac{1}{2}}$, treating the stationary Prandtl equation as a 1-D parabolic equation in the entire space.

In this study, we establish that $\|u - \bar{u}\|_{L^{\infty}_{y}} \lesssim (x+1)^{-1}$. Our proof relies on discovering nearly conserved low-frequency quantities and inherent degenerate structures at the boundary, which enhance the convergence rate through iteration techniques. Notably, the convergence rate we have demonstrated is optimal. We can find special solutions of Prandtl's equation such that the convergence between the solutions and the Blasius profile is exact, represented as $ (x+1)^{-1} $.
\end{abstract}

 \tableofcontents
\linespread{1.5}

\section{Introduction}
In this article, we investigate the stationary Prandtl boundary layer equation on the quadrant:
\begin{equation}\label{generalprandtl}
\begin{cases}
uu_{x}+vu_{y}-u_{yy}+p'(x)=0,\quad (x,y)\in\R^{+}\times\R^{+};\\
u_{x}+v_{y}=0;\\
u(0,y)=u_{0}(y),\quad [u,v]|_{y=0}=0,\quad u|_{y\uparrow \infty}=U(x).
\end{cases}
\end{equation}
Here, $[u,v]$ denotes the velocity field, $p(x)$ represents the pressure, and $U(x)$ is the outer streaming speed. The functions $p(x)$ and $U(x)$ adhere to Bernoulli's law:
\begin{equation*}
2p(x)+U^{2}(x)=\text{constant}.
\end{equation*}
For simplicity, we set $U\equiv 1$, leading to $p(x)\equiv \text{constant}$. Thus, \eqref{generalprandtl} simplifies to the following concise form:
\begin{equation}\label{prandtl}
\begin{cases}
uu_{x}+vu_{y}-u_{yy}=0,\quad (x,y)\in\R^{+}\times\R^{+};\\
u_{x}+v_{y}=0;\\
u(0,y)=u_{0}(y),\quad [u,v]|_{y=0}=0,\quad u|_{y\uparrow \infty}=1.
\end{cases}
\end{equation}
\subsection{Physical Backgroud}
Prandtl's boundary layer equation holds significant importance in mathematical fluid mechanics. It serves to characterize the behavior of a parallel outer flow $\vec{U}=[U(x),0]$ passing a solid body through a fluid with small viscosity, representing a fundamental problem in fluid mechanics. In our specific scenario, $\vec{U}=[1,0]$, a solution derived from the Euler equations for an ideal fluid. In the context of fluids with low viscosity, the absence of sliding at the solid body's surface primarily influences the flow in a thin layer adjacent to the body, known as the \emph{boundary layer}.

The groundwork for boundary layer theory was established in 1904 by Prandtl \cite{Pr1904}. He identified two distinct regions within flows: the main flow region, where viscosity can be disregarded, corresponding to the inviscid solution, and the thin boundary layer near the body's surface, where viscosity plays a crucial role. The mathematical transition from the boundary layer to the outer flow is encapsulated by Prandtl's equation \eqref{generalprandtl}.

In our specific case, with $\vec{U}=[1,0]$, a special stable solution to the Euler equations for an ideal fluid with constant pressure, the general Prandtl's equation \eqref{generalprandtl} simplifies to the more concise form given by equation \eqref{prandtl}.
\subsection{Solutions of Prandtl's Equations}
Let us revisit the von Mises transform, a method that converts equation \eqref{prandtl} into a degenerate parabolic equation. We introduce new variables as follows:
\begin{equation}\label{von}
X=x;\quad \psi=\psi(x,y;u)=\int_{0}^{y}u(x,s)ds.
\end{equation}
When $ u $ remains positive except at $ y = 0 $, the von Mises transform becomes invertible. The inverse transformation is represented by
\begin{equation}\label{invervon}
x=X;\quad y=y(X,\psi;u).
\end{equation}
We define a new unknown function:
\begin{equation}\label{unknown}
w(X,\psi;u)=u^{2}\left(x,y(X,\psi;u)\right).
\end{equation}
Through direct calculations, Prandtl's equation \eqref{prandtl} is converted under the von Mises transform into the following quasilinear parabolic equation:
\begin{equation}\label{vonprandtl}
\begin{cases}
w_{X}-\sqrt{w}w_{\psi\psi}=0;\\
w(X,0)=0,\quad w(X,\psi)|_{\psi\uparrow\infty}=1;\\
w(0,\psi)=w_{0}(\psi),\quad \displaystyle w_{0}\left(\int_{0}^{y}u_{0}(s)ds\right)=\left(u_{0}(y)\right)^{2}.
\end{cases}
\end{equation}

Building upon the von Mises transform, Oleinik established the existence and uniqueness of classical solutions for Prandtl's equation \eqref{prandtl} (refer to \cite{Oleinik99}).

\begin{theorem}[\bf Oleinik]\label{oleinik}
Assume that the initial data $ u_{0}(y) $ satisfies for some $\alpha\in(0,1)$:
\begin{equation*}
\begin{aligned}
&u_{0}(y)\in C^{2,\alpha}([0,\infty)),\quad u_{0}(0)=0,\quad u_{0}(y)>0\ \text{for}\ y>0;\\
&u_{0}'(0)>0,\quad u_{0}''(y)=O(y^{2}),\quad \lim_{y\to\infty}u_{0}(y)=1.
\end{aligned}
\end{equation*}
Then the Prandtl equation \eqref{prandtl} admits a unique global classical solution $ [u,v] $ satisfying
\begin{equation*}
u(x,y)>0\ \text{for}\ y>0,\quad u\in C^{1}(\R^{+}\times \R^{+}),\quad v, u_{yy}\in C^{0}(\R^{+}\times \R^{+}),
\end{equation*}
and for any $ X_{0}>0 $:
\begin{itemize}
\item[(1)] $ u, u_{y}, u_{yy} $ are bounded in $ [0,X_{0}]\times \R^{+}$;
\item[(2)] $ u_{x}, v, v_{y} $ are locally bounded in $ [0,X_{0}]\times \R^{+}$.
\end{itemize}
\end{theorem}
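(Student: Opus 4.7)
The natural route is through the von Mises formulation \eqref{vonprandtl}, since the boundary condition $w(X,0)=0$ is the only source of degeneracy and the equation $w_X=\sqrt{w}w_{\psi\psi}$ is otherwise a standard quasilinear parabolic equation on the half-strip. The assumptions on $u_0$ translate into $w_0\in C^{2,\alpha}$ with $w_0(0)=0$, $w_0'(0)=2u_0'(0)^2>0$, $w_0\to 1$ as $\psi\to\infty$, and $w_0''(0)=0$, which is exactly the compatibility needed to launch a classical solution.

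\textbf{Main steps.} First, I would regularize the degenerate coefficient and the domain: replace $\sqrt{w}$ by $\sqrt{w+\varepsilon}$, truncate to $\psi\in[0,N]$ with $w=1$ at $\psi=N$, and solve the resulting uniformly parabolic quasilinear initial-boundary value problem by Schauder/Ladyzhenskaya--Solonnikov--Uraltseva theory, obtaining $w^{\varepsilon,N}\in C^{2,\alpha}$. Second, I would derive a priori estimates independent of $\varepsilon$ and $N$. The maximum principle applied to $w^{\varepsilon,N}$ gives $0\le w\le 1$. Differentiating the equation once in $\psi$, a maximum principle for $w_\psi$ yields a uniform upper bound and strict positivity on compact sets, which corresponds to $uu_y$ staying controlled and bounded away from zero near $y=0$; this is the Oleinik monotonicity trick. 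A similar manipulation controls $w_{\psi\psi}$ (hence $v$ and $u_{yy}$) locally. Third, with uniform $C^{1+\alpha/2,2+\alpha}_{\text{loc}}$ estimates in the interior and appropriate boundary behaviour at $\psi=0$ (using that $w_0''(y)=O(y^2)$ in the original variables implies the right flatness of $w_0$ at $\psi=0$), I would extract a subsequential limit $w$ as $\varepsilon\downarrow 0$, $N\uparrow\infty$ and verify that it solves \eqref{vonprandtl} classically on $\R^+\times\R^+$.

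\textbf{Return to $(x,y)$ and uniqueness.} Strict positivity of $w_\psi$ at $\psi=0$, inherited from $w_0'(0)>0$ via the maximum principle on $w_\psi$, ensures that $u=\sqrt{w}$ is positive for $y>0$ and that the inverse von Mises map \eqref{invervon} is well defined and $C^1$; this produces $[u,v]$ with the regularity claimed in (1)--(2), recovering $v=-\int_0^y u_x\,ds$ from the divergence-free condition. For uniqueness I would work again in the $w$-variable: if $w_1,w_2$ are two solutions with the same data, the difference $W=w_1-w_2$ satisfies a linear parabolic inequality with coefficients involving $(\sqrt{w_1}+\sqrt{w_2})^{-1}$ and $(w_1)_{\psi\psi}$; a weighted energy estimate on $W^2$ (weight chosen to absorb the singular coefficient near $\psi=0$, e.g.\ a power of $\psi$) together with the vanishing trace at $\psi=0,\infty$ gives $W\equiv 0$.

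\textbf{Main obstacle.} The hard point is the boundary behaviour at $\psi=0$, where the coefficient $\sqrt{w}$ degenerates. All the quantitative control of $u_y, u_{yy}, v$ near $y=0$ must come through uniform-in-$\varepsilon$ bounds on $w_\psi$ and $w_{\psi\psi}$ up to $\psi=0$, and this is what forces both the compatibility condition $u_0''(y)=O(y^2)$ in the hypotheses and the Oleinik maximum-principle argument on $w_\psi$. Once that barrier is crossed, the remaining work is technical but standard.
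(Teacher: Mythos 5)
The paper does not prove Theorem~\ref{oleinik}; it is stated as a citation of Oleinik's classical result in \cite{Oleinik99}, so there is no internal proof to compare against. Your proposal correctly reconstructs the standard Oleinik argument: pass to the von Mises variable $w=u^2$, regularize the degenerate coefficient and truncate the domain, solve the uniformly parabolic problems by Ladyzhenskaya--Solonnikov--Uraltseva theory, use the maximum principle on $w$ and the Oleinik monotonicity trick (maximum principle on $w_\psi$) to get uniform bounds, pass to the limit via interior Schauder estimates, invert the von Mises map using $w_\psi>0$ near $\psi=0$, and prove uniqueness by a comparison/energy argument in the $w$-variable with a weight absorbing the degeneracy at $\psi=0$. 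This is the route both Oleinik's book and subsequent works (e.g.\ \cite{MR4327905}, \cite{MR4232771}) take, and the hard point you single out---uniform-in-$\varepsilon$ control of $w_\psi$ and $w_{\psi\psi}$ up to $\psi=0$, which is where the hypotheses $u_0'(0)>0$ and $u_0''(y)=O(y^2)$ enter---is indeed the crux. Two small computational slips to fix: since $\psi_y=u$ and hence $\partial_\psi = \tfrac{1}{u}\partial_y$, one has $w_\psi = 2u_y$, so $w_0'(0)=2u_0'(0)$ (not $2u_0'(0)^2$), and a bound on $w_\psi$ controls $u_y$, not $uu_y$.
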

In a recent study by Wang and Zhang \cite{MR4327905}, utilizing maximal principle techniques, the authors demonstrated that the solutions proposed by Oleinik in Theorem \ref{oleinik} exhibit smoothness up to the boundary $y=0$ for any $x>0$, showcasing what is termed as \textit{instant smoothness}. Furthermore, Guo and Iyer's work \cite{MR4232771} established the higher regularity of Oleinik's solutions through energy methods.
Oleinik \cite{Oleinik99} also established that under favorable pressure conditions (i.e. $p'(x)\leq 0$), global solutions for \eqref{generalprandtl} exist. In the studies referenced in \cite{MR4327905} and \cite{MR4232771}, the authors also assumed the favorable pressure condition. Conversely, for adverse pressure scenarios, only local solutions exist. The discontinuation of solutions in cases of adverse pressure is known as \textit{boundary layer separation}, a phenomenon justified mathematically in \cite{MR4028516} and \cite{SWZ21}.

In \cite{Blas1908}, Blasius (1908) introduced a notable solution to Prandtl's equations known as \textit{Blasius's self-similar solution} or the \textit{Blasius profile}. Consider a function $f(\cdot)$ that satisfies the ordinary differential equation:
\begin{equation}\label{blasiusode}
\begin{cases}
f''' + \frac{1}{2}ff'' = 0, \quad \text{in} \ \mathbb{R}^{+};\\
f(0) = f'(0) = 0, \quad f'(\infty) = 1.
\end{cases}
\end{equation}
Define the vector field on the domain $\mathbb{R}^{+} \times \mathbb{R}^{+}$ as:
\begin{equation}\label{blasius}
[\bar{u}, \bar{v}] = \left[f'(\eta), \frac{1}{2\sqrt{x+1}}\left(\eta f'(\eta) - f(\eta)\right)\right], \quad \eta = \frac{y}{\sqrt{x+1}}.
\end{equation}
Then, the pair $[\bar{u},\bar{v}]$ satisfies Prandtl's equation \eqref{prandtl} along with the specified boundary conditions.

By solving the ODE \eqref{blasiusode} rather than a PDE, we can derive insights into the Blasius profile \eqref{blasius}. This leads to a fundamental question:
\begin{question}\label{question}
Do the solutions of Prandtl's equations \eqref{prandtl} converge to the Blasius profile \eqref{blasius}, and if they do, what is the convergence rate?
\end{question}
It is crucial to note that this question plays a significant role in mathematically validating the applicability of Prandtl's boundary layer expansion for viscous flow; see \cite{MR4642817}, \cite{MR4462474}, \cite{iyer2021globalinx} for further insights.
\subsection{Results in Existing Literature}
In the work of Serrin \cite{MR0282585}, he proved that the Blasius profile \eqref{blasius} is indeed a self-similar attractor of Prandtl's system \eqref{prandtl}, which answered the first part of Question \ref{question}. More precisely, he proved that
\begin{theorem}[\bf Serrin]\label{serrin}
Let $ u(x,y) $ be the solution of \eqref{prandtl}, then the following asymptotic behaviour holds
\begin{equation}
\lim_{x\to\infty}\|u-\bar{u}\|_{L^{\infty}_{y}}=0.
\end{equation}
\end{theorem}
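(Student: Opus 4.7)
The plan is to recast \eqref{prandtl} as a scalar degenerate parabolic equation via the von Mises transform, pass to self-similar coordinates in which the Blasius profile is stationary, and then extract convergence from the comparison principle combined with a blow-down/compactness argument. First I would apply \eqref{von}--\eqref{unknown} to obtain \eqref{vonprandtl}, namely $w_X=\sqrt{w}\,w_{\psi\psi}$ on $(0,\infty)^2$ with $w(X,0)=0$, $w(X,\infty)=1$, and $w(0,\psi)=w_0(\psi)$. A comparison principle for this quasilinear parabolic equation is available, and Oleinik's Theorem \ref{oleinik} supplies uniform $L^\infty$ and interior regularity bounds. In these variables the Blasius field \eqref{blasius} becomes a self-similar profile $\bar w(X,\psi)=G(\psi/\sqrt{X+1})$ for a monotone function $G$ derived from \eqref{blasiusode} with $G(0)=0$ and $G(\infty)=1$. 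Since $u=\sqrt{w}$, the statement of Theorem \ref{serrin} reduces to showing uniform convergence of $w$ to $\bar w$ on the whole half-line in $\psi$.

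\textbf{Rescaling and attractivity.} Set $\xi=\psi/\sqrt{X+1}$, $\tau=\log(X+1)$, and $W(\tau,\xi)=w(X,\psi)$. A direct computation gives the autonomous evolution
\begin{equation*}
W_\tau = \sqrt{W}\,W_{\xi\xi} + \frac{\xi}{2}W_\xi,\qquad W(\tau,0)=0,\ W(\tau,\infty)=1,
\end{equation*}
for which $\bar W(\xi):=G(\xi)$ is a stationary solution. Theorem \ref{serrin} is therefore the global attractivity statement $W(\tau,\cdot)\to\bar W$ in $L^\infty_\xi$ as $\tau\to\infty$. I would prove this in two steps. \emph{(i)} By Oleinik's uniform bounds and standard interior parabolic estimates, from any sequence $\tau_n\to\infty$ one extracts a subsequence along which $W(\tau_n+t,\xi)\to W_\infty(t,\xi)$ locally uniformly, and $W_\infty$ solves the same autonomous equation on $\mathbb{R}\times(0,\infty)$ with the same boundary values. \emph{(ii)} Using the comparison principle against suitably dilated or shifted Blasius profiles, together with the degenerate Hopf bound $W(\tau,\xi)\gtrsim \xi^2$ near $\xi=0$ coming from Oleinik's positivity of $u_y|_{y=0}$, one squeezes $W_\infty$ between $\varepsilon$-perturbations of $\bar W$ and identifies $W_\infty\equiv\bar W$. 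Since the limit is independent of the chosen subsequence, the full convergence $W(\tau,\cdot)\to\bar W$ follows, which is Theorem \ref{serrin}.

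\textbf{Main obstacle.} The principal difficulty is the double degeneracy of the problem: $\sqrt{W}$ vanishes at $\xi=0$ so parabolic theory degenerates at the boundary, and the spatial domain is unbounded so local uniform convergence does not automatically upgrade to $L^\infty_\xi$ convergence. For the boundary issue, the Hopf-type estimate described above, which persists to the limit by Oleinik's regularity, supplies enough nondegeneracy to run the comparison argument up to $\xi=0$. For the tail issue, one constructs exponentially decaying upper/lower barriers for which $W\to 1$ at a rate uniform in $\tau$, so that the tail of $|W-\bar W|$ can be made arbitrarily small uniformly in $\tau$. Combining these two ingredients with the comparison principle converts the local-in-$\xi$ limit into the claimed $L^\infty_\xi$ convergence, which by the relation $u=\sqrt{w}$ is exactly the conclusion of the theorem.
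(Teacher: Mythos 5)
The paper does not prove Serrin's theorem; it cites \cite{MR0282585} and describes the strategy in one sentence: under the von Mises transform, the twisted subtraction $\phi$ in \eqref{phii} satisfies the linear parabolic equation \eqref{eqphi} with zeroth-order coefficient $A\ge0$, and Serrin applies the maximum principle directly to \emph{that} equation in the original $(X,\psi)$ variables. Your proposal takes a genuinely different route: you pass to the self-similar frame $(\tau,\xi)$, extract an eternal limit flow $W_\infty$ by parabolic compactness, and then try to identify $W_\infty\equiv\bar W$ by a Liouville/rigidity argument using the one-parameter family of dilated Blasius profiles as two-sided barriers. This blow-down template is the modern dynamical-systems analogue of Serrin's approach and would, in principle, be more robust (it decouples the compactness from the rigidity, and the rigidity statement would be independently useful), whereas Serrin's direct maximum-principle argument is shorter but more ad hoc.

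That said, the proposal has two genuine gaps. First, the rigidity step (ii) is the entire content of the theorem and is not actually argued: ``one squeezes $W_\infty$ between $\varepsilon$-perturbations of $\bar W$ and identifies $W_\infty\equiv\bar W$'' is an assertion, not a proof. The natural sliding argument---take $\Lambda^*(\tau)=\inf\{\Lambda:\bar W(\Lambda\xi)\le W(\tau,\xi)\}$, observe it is monotone by comparison (note $\bar W(\Lambda\cdot)$ is a \emph{strict} stationary supersolution when $\Lambda>1$, since $L[\bar W(\Lambda\cdot)]=(1-\Lambda^2)\tfrac{\Lambda\xi}{2}\bar W'(\Lambda\xi)<0$), pass to an interior touching point in the eternal limit, and derive a contradiction via the strong maximum principle---can plausibly be carried out, but it is precisely the touching point escaping to $\xi\to0$ or $\xi\to\infty$ that you acknowledge as the ``main obstacle'' without resolving it. At $\xi=0$ both $W_\infty$ and $\bar W(\Lambda\cdot)$ vanish quadratically, so the boundary version of the argument needs a genuine second-order (not Hopf-first-order) comparison. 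Second, the tail barriers: you claim ``one constructs exponentially decaying upper/lower barriers,'' but under Oleinik's hypotheses (Theorem \ref{oleinik}) the only requirement on the far field is $u_0(\infty)=1$ with no rate, so the initial $w_0$ need \emph{not} be sandwiched between Gaussian-tailed dilated Blasius profiles, and the existence of the barriers you invoke is not guaranteed for the class of solutions the theorem covers. (This is exactly why Theorem \ref{main2} in the present paper adds the polynomial decay hypothesis \eqref{additional} before constructing barriers.) Without resolving these two points the outline does not yet constitute a proof.
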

The proof of Serrin is based on the von Mises transform mentioned above. Let us explain the idea briefly. Define
\begin{equation}\label{phii}
\phi(X,\psi)=u^{2}\left(X,y(X,\psi;u)\right)-\bar{u}^{2}\left(X,y(X,\psi;\bar{u})\right).
\end{equation}
We can regard $ \phi $ as the \textit{twisted subtraction} between $ u $ and $ \bar{u} $, since in the expression of $ \phi $,  $ u $ and $ \bar{u} $ are evaluated at the different vertical axis in physical coordinates.
\begin{remark}In our article, we adopt the convention that $ u , \bar{u} $ are viewed simultaneously as functions of $ (x,y) $ and $ (X,\psi) $, through the von Mises transformation. Although there is a slight abuse of notation (more rigorously different symbols should be used for the same function in different coordinates), the great advantage is simplicity in notations.
\end{remark}
Substitute \eqref{phii} into Prandtl's equation \eqref{prandtl}, the twisted subtraction $ \phi $ satisfies the following quasilinear parabolic equation:
\begin{equation}\label{eqphi}
\begin{cases}
\phi_{X}-u\phi_{\psi\psi}+A\phi=0,\quad \text{in}\ \R^{+}\times\R^{+};\\
\phi(0,\psi)=\phi_{0}(\psi),\quad \phi(X,0)=\phi(X,\infty)=0\\
A(X,\psi)=\displaystyle\frac{-2\bar{u}_{yy}}{\bar{u}(u+\bar{u})}\bigg|_{(X,\psi)}.
\end{cases}
\end{equation}

Under this transformation, Serrin employed the maximal principle technique for the equation \eqref{eqphi} to obtain the result (Theorem \ref{serrin}).

Recently, Iyer \cite{MR4097332} proved an explicit decay rate of the twisted subtraction $ \phi $ and its derivations in von Mises coordinates, when the initial $ \phi_{0}(\psi) $ is small in suitable weighted Sobolev spaces with high regularities. More precisely, in \cite{MR4097332}, Iyer proved that
\begin{equation}
\begin{aligned}
&\sum_{j\leq K_{0}-1}\Bigg(\|\p_{X}^{j}\phi\|_{L^{2}_{\psi}}(X+1)^{j+\frac{1}{4}-\delta}+\|\p_{X}^{j}\phi_{\psi}\|_{L^{2}_{\psi}}(X+1)^{j+\frac{3}{4}-\delta}\\
&+\|\p_{X}^{j}\phi\|_{L^{\infty}_{\psi}}(X+1)^{j+\frac{1}{2}-\delta}+\|\p_{X}^{j}\phi_{\psi}\|_{L^{\infty}_{\psi}}(X+1)^{j+1-\delta}\Bigg)\lesssim 1,\quad \text{for}\ \delta=\frac{1}{100},
\end{aligned}
\end{equation}
provided
\begin{equation}\label{iyerassume}
\sum_{l\leq 2K_{0}}|\p_{\psi}^{l}\phi_{0}(\psi)(\psi+1)^{10}|\leq \kappa
\end{equation}
holds for $0<\kappa \ll 1$ and $K_{0}\gg 1$ relative to universal constants. In particular,
\begin{equation*}
|\rho(X,\psi)|(X+1)^{\frac{1}{2}-\delta}\lesssim 1,\quad \rho(X,\psi):=u(X,\psi)-\bar{u}(X,\psi),
\end{equation*}
and
\begin{equation*}
|u(x,y)-\bar{u}(x,y)|(x+1)^{\frac{1}{2}-\delta}\lesssim 1.
\end{equation*}

The key ingredient in \cite{MR4097332} is \emph{a three-tiered energy estimate}: basic $L^{2}$ bounds, intermediate derivatives bounds, and high derivatives bounds. Therefore, to apply this technique, the initial data $\phi_{0}(\psi)$ needs to be quite regular in \cite{MR4097332}. 

However, when we turn only to prove the $L^{\infty}$ decay rate of $u$ and $\bar{u}$, one may hope to work with a lower regularity scheme. Before stating our main result Theorem \ref{main}, we first present the following auxiliary theorem, which can be viewed as a low regularity version of \cite{MR4097332}.
\begin{theorem}\label{iyer}
Under the assumptions in Theorem \ref{oleinik}, there exists a positive constant $ \kappa \ll 1 $ such that if the initial data $ u_{0}(y) $ in \eqref{prandtl} satisfies
\begin{equation}\label{smallnorm}
\sum_{j\leq 2}\left\|(1+y)\partial_{y}^{j}(u_{0}-\bar{u}_{0})(y)\right\|_{L^{2}(\R^{+})}\leq \kappa,
\end{equation}
then the twisted subtraction $ \phi $ satisfies the following decay estimates. For $\delta=\frac{1}{100}$:
\begin{equation}\label{iyerdecay}
\begin{cases}
\|\phi(X,\psi)\|_{L^{2}_{\psi}}\lesssim (X+1)^{-(\frac{1}{4}-\delta)},\|\phi(X,\psi)\|_{L^{\infty}_{\psi}}\lesssim (X+1)^{-(\frac{1}{2}-\delta)};\\
\|\phi_{\psi}(X,\psi)\|_{L^{2}_{\psi}}\lesssim (X+1)^{-(\frac{3}{4}-\delta)},\|\phi_{\psi}(X,\psi)\|_{L^{\infty}_{\psi}}\lesssim (X+1)^{-(1-\delta)};\\
\|\phi_{X}(X,\psi)\|_{L^{2}_{\psi}}\lesssim (X+1)^{-(\frac{5}{4}-\delta)},\left\|\phi_{X\psi}(X,\psi)(X+1)^{(\frac{5}{4}-\delta)}\right\|_{L^{2}_{X,\psi}}\lesssim 1.
\end{cases}
\end{equation}
\end{theorem}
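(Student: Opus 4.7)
The plan is to establish the decay estimates \eqref{iyerdecay} by a multi-tier weighted energy scheme applied directly to the twisted subtraction equation \eqref{eqphi}, combining temporal weights $(X+1)^\alpha$ with spatial weights in the Blasius self-similar variable $\eta = \psi/\sqrt{X+1}$. As a preparatory step, I would translate the physical-space hypothesis \eqref{smallnorm} into von Mises coordinates: the change of variables $\psi_0(y) = \int_0^y u_0$, together with the non-degeneracy $u_0'(0) > 0$ and the relation $w_0(\psi) = u_0^2$, converts the weighted $y$-norms of $u_0 - \bar u_0$ into corresponding weighted $\psi$-norms of $\phi_0 = w_0 - \bar w_0$, producing bounds of the form $\sum_{j \leq 2}\|(1+\psi)^{1/2}\p_\psi^j\phi_0\|_{L^2_\psi} \lesssim \kappa$ that serve as the initial reservoir of weight and regularity.

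For the base layer, I would multiply \eqref{eqphi} by $\phi$ and integrate in $\psi$ to obtain the dissipation identity
\begin{equation*}
\tfrac{1}{2}\tfrac{d}{dX}\|\phi\|_{L^2_\psi}^2 + \int_0^\infty u\phi_\psi^2\,d\psi + \int_0^\infty\Bigl(A - \tfrac{1}{2}u_{\psi\psi}\Bigr)\phi^2\,d\psi = 0,
\end{equation*}
noting that $A \geq 0$ because the Blasius ODE $f''' = -\tfrac{1}{2}ff''$ forces $\bar u_{yy} \leq 0$, and the residual low-order term is absorbable using the smallness $\kappa$. To promote this identity to the decay rate $(X+1)^{-(1/4-\delta)}$ I would run a parallel weighted energy estimate to propagate a uniform-in-$X$ bound on $\|(1+\eta)\phi\|_{L^2_\psi}$, where the cancellation between the temporal drift $\eta_X = -\tfrac{1}{2}\eta/(X+1)$ and the parabolic dissipation keeps the weighted norm bounded. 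A Nash-type interpolation
\begin{equation*}
\|\phi\|_{L^2_\psi}^6 \lesssim \|(1+\eta)\phi\|_{L^2_\psi}^4 (X+1)\int_0^\infty u\phi_\psi^2\,d\psi
\end{equation*}
then closes an ODE whose solution yields the advertised $L^2$ decay with any small loss $\delta > 0$.

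For the first and second derivative tiers I would differentiate \eqref{eqphi} in $\psi$ and test successively against $\phi_\psi$ and $\phi_{\psi\psi}$ with increasing temporal weights $(X+1)^{3/2-2\delta}$ and $(X+1)^{5/2-2\delta}$. The commutator $[\p_\psi, u\p_\psi^2]\phi$ produces lower order pieces $u_\psi \phi_{\psi\psi}$ and $u_{\psi\psi}\phi_\psi$ that are either integrated by parts into the dissipation or absorbed by smallness combined with the previous tier's decay, and the source term $A_\psi \phi$ on the right side of the differentiated equation is controlled by the Step 2 decay of $\phi$. The $L^\infty$ estimates then follow by Agmon's inequality $\|f\|_{L^\infty_\psi}^2 \leq 2\|f\|_{L^2_\psi}\|f_\psi\|_{L^2_\psi}$ applied to $\phi$ and $\phi_\psi$, and the $\phi_X$ bound is read off directly from the equation via $\phi_X = u\phi_{\psi\psi} - A\phi$. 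The principal obstacle is the degeneracy of the diffusion at $\psi = 0$, where $u$ vanishes like $\sqrt{\psi}$ so that the dissipation $\int u\phi_\psi^2$ offers no control on $\phi_\psi$ near the boundary; one must rely on the damping from $A$ (which remains finite and positive there) together with smallness of $\kappa$ to absorb the commutator terms, and carry out the whole scheme using only two $\psi$-derivatives of the data rather than the high regularity of \cite{MR4097332}.
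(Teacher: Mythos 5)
The core technical problem with your scheme is the choice of test function in the base-layer energy estimate, and the interpolation step that follows it. Multiplying \eqref{eqphi} by $\phi$ produces the degenerate dissipation $\int u\phi_\psi^2\,d\psi$, and you then invoke the Nash-type inequality $\|\phi\|_{L^2}^6 \lesssim \|(1+\eta)\phi\|_{L^2}^4 (X+1)\int u\phi_\psi^2\,d\psi$. This inequality is false as stated: genuine 1D Nash interpolation gives $\|\phi\|_{L^2}^6 \lesssim \|\phi\|_{L^1}^4\|\phi_\psi\|_{L^2}^2$, and since $u\lesssim 1$ one only has $\int u\phi_\psi^2 \leq \|\phi_\psi\|_{L^2}^2$, so replacing $\|\phi_\psi\|_{L^2}^2$ by $\int u\phi_\psi^2$ in Nash's inequality makes it strictly stronger, hence unproved. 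The damping from $A$ cannot repair this since $A$ controls $\phi^2$, not $\phi_\psi^2$, and no amount of smallness in $\kappa$ changes the functional-analytic shape of the inequality. The paper's resolution is to test \eqref{eqphi} against $\phi\frac{1}{u}(1+\psi^{1-2\mu})$ rather than $\phi$: this exchanges the degenerate dissipation for the non-degenerate $\int|\phi_\psi|^2(1+\psi^{1-2\mu})\,d\psi$, at the cost of working with the stronger energy $\int\phi^2\frac{1}{u}\,d\psi$, and generates a term $\int\phi^2\big(\frac{A}{u}+\frac{u_X}{2u^2}\big)$ that is controlled by a precise cancellation $-\bar u_{yy}+\bar u\bar u_X = 0$ coming from the Blasius equation itself. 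Without that $\frac{1}{u}$ weight in the test function, the required coercivity never appears, and the decay rate cannot be extracted.

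The second gap is in your higher-derivative tiers. You propose to differentiate \eqref{eqphi} in $\psi$ and test against $\phi_\psi$, $\phi_{\psi\psi}$. In the von Mises variable the coefficient $u$ degenerates as $\psi^{1/2}$, so $u_\psi = u_y/u \sim \psi^{-1/2}$ and $u_{\psi\psi}\sim \psi^{-3/2}$ near $\psi=0$; the resulting commutators $u_\psi\phi_{\psi\psi}$ and $u_{\psi\psi}\phi_\psi$ are genuinely singular, and it is not clear they can be integrated by parts into the dissipation or absorbed by smallness alone. The paper instead differentiates in $X$ (for the top tier) and tests the original equation against $\phi_X\frac{1}{u}(1+\psi^{1-2\mu})(X+1)^{\cdot}$ for the middle tier, reconstructing $\phi_{\psi\psi}$ from $\phi_X$ via $\phi_{\psi\psi} = \frac{\phi_X + A\phi}{u}$ when needed. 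Relatedly, the translation of \eqref{smallnorm} into von Mises coordinates in the paper tracks the weighted quantities $\int\phi_0^2\frac{1+\psi}{u}$, $\int|\partial_\psi\phi_0|^2(1+\psi)$, and $\int|\phi_X(0,\cdot)|^2\frac{1+\psi}{u}$ — with the $\frac{1}{u}$ factor and the $X$-derivative (equal to $2u_0''(y_1(\psi)) - 2\bar u_0''(y_2(\psi))$ by the equation) rather than $\partial_\psi^2\phi_0$ — precisely so that only two physical $y$-derivatives of $u_0$ are ever needed. Your claimed translated bound $\sum_{j\le 2}\|(1+\psi)^{1/2}\partial_\psi^j\phi_0\|_{L^2_\psi}\lesssim\kappa$ omits the $\frac{1}{u}$ weight and asks for $\partial_\psi^2\phi_0$ directly, which is subtler to derive and does not match the energies actually propagated.
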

\begin{remark}
It is worth noting that the initial data assumptions in Theorem \ref{iyer} are less regular compared to those in \emph{\cite{MR4097332}}, as we only need to utilize \eqref{iyerdecay} in our current study. In addition, notice that the assumptions \eqref{iyerassume} in \emph{\cite{MR4097332}} are established in the von Mises coordinates. Since Prandtl's equation is originally formulated in physical coordinates, it is more natural to impose assumptions on the initial data within the physical coordinate system. Since the assumptions on the initial data in Theorem \ref{iyer} are weaker than those in \emph{\cite{MR4097332}}, the conclusions in Theorem \ref{iyer} cannot be derived from the results in \emph{\cite{MR4097332}}. Similar to \emph{\cite{MR4097332}}, the proof of Theorem \ref{iyer} is based on weighted energy estimate techniques and also more careful calculations. For completeness, we present a self-contained proof of Theorem \ref{iyer} in Appendix \ref{pfiyer}. The key point is that the energy estimates crucial for the proofs both here and in Iyer's work can be closed even in relatively low regularity function spaces.
\end{remark}
In the recent work \cite{MR4657422}, Wang and Zhang proved the decay rate\begin{equation}
\|u-\bar{u}\|_{L^{\infty}_{y}}\lesssim (x+1)^{-(\frac{1}{2}-)}\footnote{This is, for any $\gamma\in(0,\frac{1}{2})$,
\begin{equation*}
\|u-\bar{u}\|_{L^{\infty}_{y}}\leq C(\gamma)(x+1)^{-\gamma},\quad C(\gamma)|_{\gamma\uparrow\frac{1}{2}}=\infty.
\end{equation*}}
,
\end{equation}
without smallness assumptions but with a stronger localization condition. Specifically,
\begin{equation}\label{zhangcondition}
|u_{0}(y)-1 |\lesssim e^{-Cy^{2}}
\end{equation}
for some large $ C>0 $.  By the results on Serrin \cite{MR0282585}, one can expect that the perturbations become sufficiently small at large times, provided that suitable localization estimates can be established.

Indeed, in \cite{MR4657422}, the key estimate to construct barrier function is
\begin{equation}\label{zhangkey}
|\phi_{0}(\psi)|\lesssim \p_{\psi}\bar{w}_{0}(\psi),\quad \bar{w}=\bar{u}^{2},
\end{equation}
hence we need $C>0$ is large in \eqref{zhangcondition} due to the exponential decay of $\p_{\psi}\bar{w}_{0}$.

In \cite{MR4097332} and \cite{MR4657422}, it was suggested that the convergence rate $ (x+1)^{-(\frac{1}{2}-)} $ is nearly optimal, based on comparing with the one-dimensional heat equation. However, the following heuristic analysis suggests that the convergence rate may reach $ (x+1)^{-1} $.\\

{\bf Heuristic Analysis.} {\small Recall the equation \eqref{vonprandtl}. If $ u $ satisfies Prandtl's equation \eqref{prandtl}, then in von Mises coordinate, $ w=u^{2} $ satisfies}
\begin{equation}\label{prandtlvon}
 w_{X}-\sqrt{w}w_{\psi\psi}=0.
\end{equation}
{\small We introduce the following transformation:}
\begin{equation}\label{transform}
\begin{cases}
s = \log(X+1);\\
Y = \displaystyle\frac{\psi}{\sqrt{X+1}};\\
W(s,Y) = w(X,\psi).
\end{cases}
\end{equation}
{\small Under this transformation, \eqref{prandtlvon} transforms to}
\begin{equation}\label{prandtlnew}
\partial_{s}W - \frac{Y}{2}\partial_{Y}W - \sqrt{W}W_{YY} = 0.
\end{equation}
{\small By the definition of the von Mises transform \eqref{von}, $\bar{u}^{2}$ still exhibits a self-similar structure in the von Mises coordinates. Therefore, under the transform \eqref{transform}, $\bar{u}^{2} = \overline{W}(Y)$ for some profile $\overline{W}(\cdot)$, and $\overline{W}$ satisfies the following ordinary differential equation for $Y>0$,}
\begin{equation}\label{Weq}
\frac{Y}{2}\partial_{Y}\overline{W}(Y) + \sqrt{\overline{W}}\partial_{Y}^{2}\overline{W}(Y) = 0.
\end{equation}
{\small Consider the operator $W \rightarrow \Phi(W) := \frac{Y}{2}\partial_{Y}W + \sqrt{W}W_{YY}$. Heuristically,}
\begin{equation*}
\partial_{s}(W - \overline{W}) = \Phi(W) - \Phi(\overline{W}) \approx \Phi'(\overline{W})(W - \overline{W}).
\end{equation*}
{\small Thus, to determine the exact convergence rate of $W(s,Y)$ towards $\overline{W}(Y)$, it is important to study the linearized operator:}
\begin{equation}\label{linearized}
\mathcal{L} = -\Phi'(\overline{W}) = -\frac{Y}{2}\partial_{Y} - \frac{1}{2}\frac{\overline{W}''}{\sqrt{\overline{W}}} - \sqrt{\overline{W}}\partial_{Y}^{2}.
\end{equation}
{\small By judiciously defining the domain of $\mathcal{L}$ within suitably weighted Sobolev spaces, the convergence rate depends on the principal eigenvalue of $\mathcal{L}$. Notably,}
\begin{equation*}
\mathcal{L}(\varphi) = \varphi, \quad \text{with}\ \varphi(Y) = Y\overline{W}'(Y),
\end{equation*}
{\small which can be easily verified through \eqref{Weq}. Given the intrinsic properties of the Blasius profile, the positivity of $\varphi(Y)$ in $(0,\infty)$ shows that $\lambda = 1$ is the principal eigenvalue of $\mathcal{L}$. This can be rigorously established using arguments similar to those in Section 6.5 of \cite{MR1625845}. 

Consequently, the spectral analysis of $\mathcal{L}$ implies an exponential convergence rate of $e^{-s}$ within the $(s,Y)$ coordinates, corresponding to $(X+1)^{-1}$ within the von Mises coordinates.} \\

In this paper, we confirm the above heuristic analysis and establish the expected sharp rate given by $ (x+1)^{-1} $ as $ x \to \infty $, using additional structures the Prandtl equation possesses. We identify these special structures and use nearly conserved weighted quantities (which capture the localization property of the solutions), to establish the sharp rate of decay $ (x+1)^{-1} $. Our main result will be presented in the next subsection \ref{result}, and our strategies will be outlined in subsection \ref{idea}. The optimality of the decay rate we obtained will be explained in Example \ref{optimal}.

\subsection{Main Results}\label{result}
Our first main result is the following theorem:
\begin{theorem}\label{main}
Suppose that the initial data $ u_{0}(y) $ satisfies Oleinik's conditions in Theorem \ref{oleinik}, and
\begin{equation}\label{decayassume}
(1+y)(u_{0}-\bar{u}_{0})\in L^{1}_{y\in\R^{+}},\quad \sum_{j\leq 2}\left\|(1+y)\partial_{y}^{j}(u_{0}-\bar{u}_{0})(y)\right\|_{L^{2}(\R^{+})}\leq \kappa \ll 1.
\end{equation}
Then,  in both  physical coordinates and  von Mises coordinates, the following asymptotic stability results hold for $ X,x>0 $:
\begin{equation}\label{mainresult}
\|u(X,\psi)-\bar{u}(X,\psi)\|_{L^{\infty}_{\psi\in\R^{+}}}\lesssim (X+1)^{-1},\quad \|u(x,y)-\bar{u}(x,y)\|_{L^{\infty}_{y\in\R^{+}}}\lesssim (x+1)^{-1}.
\end{equation}
\end{theorem}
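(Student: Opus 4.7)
The plan is to bootstrap the preliminary decay rate $(X+1)^{-(1/2-\delta)}$ from Theorem \ref{iyer} up to the sharp rate $(X+1)^{-1}$ through an iteration procedure driven by two new ingredients: a nearly conserved low-frequency quantity (enabled by the $L^1$-weighted assumption on the initial data that is absent from \cite{MR4097332}), together with the degenerate boundary structure of $\bar{u}^2$ near $\psi=0$ that matches the self-similar geometry of $\overline{W}$.

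First I would work with the twisted subtraction $\phi = u^2 - \bar{u}^2$ in von Mises coordinates, which satisfies \eqref{eqphi}. The new hypothesis $(1+y)(u_0-\bar{u}_0) \in L^1_y$ translates, via $\psi = \int_0^y u_0$, into a bound on a weighted moment $\int_0^\infty (1+\psi)\,|\phi_0(\psi)|\,d\psi$. Motivated by the heuristic spectral analysis and the identity $\mathcal{L}(Y\overline{W}') = Y\overline{W}'$, I would search for a weight $\chi(\psi)$ (essentially the adjoint eigenfunction of $\mathcal{L}$ at $\lambda = 1$, pulled back to von Mises coordinates) and form the functional $I(X) := \int_0^\infty \phi(X,\psi)\,\chi(\psi)\,d\psi$. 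Integrating \eqref{eqphi} against $\chi$ and exploiting $u(X,0)=0$ together with the decay of $\phi$ at $\psi=\infty$, I expect $I(X)$ to satisfy a linear ODE of the form $\frac{d}{dX}I + \frac{1}{X+1}I = N(X)$, where $N(X)$ is a quadratic remainder coming from the nonlinear factor $u = \sqrt{\bar{u}^2+\phi}$ in the diffusion coefficient. This yields $|I(X)|\lesssim (X+1)^{-1}$ provided $N$ decays faster than $(X+1)^{-2}$.

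Next I would run a bootstrap on the decay exponent. Assume inductively $\|\phi\|_{L^\infty_\psi}\lesssim (X+1)^{-\alpha_k}$ with $\alpha_0 = \frac{1}{2}-\delta$ from Theorem \ref{iyer}. Writing $\phi = c(X)\,\varphi(\psi/\sqrt{X+1}) + \phi^\perp$ where $\varphi$ corresponds to $Y\overline{W}'(Y)$ in self-similar coordinates, the scalar coefficient $c(X)$ is controlled by $I(X)$ and hence decays as $(X+1)^{-1}$. The transverse part $\phi^\perp$ must satisfy an evolution for which the linearized operator has a strictly larger spectral gap. Through weighted energy estimates adapted to the boundary degeneracy $\overline{W}(Y) \sim Y$ as $Y\downarrow 0$ — which makes the diffusion $\sqrt{\overline{W}}\,\partial_Y^2$ genuinely weaker near the wall and provides a favorable Hardy-type inequality — I would upgrade $\alpha_k \mapsto \alpha_{k+1}$, iterating until the rate saturates at $1$. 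The pointwise bound follows from the weighted $L^2$ decays of $\phi$ and $\phi_\psi$ in \eqref{iyerdecay} combined with their iterated upgrades, via Gagliardo–Nirenberg. Finally, the identity $|u-\bar{u}| = |\phi|/(u+\bar{u})$ and the nondegeneracy of $\bar{u}_y$ at the wall in an $L^\infty$ sense transport the bound from von Mises to physical coordinates.

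The hardest step is expected to be the simultaneous control of $I(X)$ and the gap estimate for $\phi^\perp$, since both rest on a delicate choice of weights that must (i) be compatible with the boundary degeneracy at $\psi=0$ where $u,\bar{u}$ vanish and the parabolic operator in \eqref{eqphi} loses uniform ellipticity; (ii) absorb the potential term $A\phi$, whose coefficient is singular through the factor $1/\bar{u}$; and (iii) allow the nonlinear remainder $N(X)$ — arising from $u \neq \bar{u}$ in the coefficient $u\phi_{\psi\psi}$ — to be closed using only the previous iterate's rate. Turning (i) from an obstruction into a favorable Hardy structure, and synchronizing (ii)–(iii) with the conservation law so that each iteration step gains a definite amount of decay, is the crucial structural insight that separates the sharp exponent $1$ from the $\frac{1}{2}-\delta$ rate of the earlier works.
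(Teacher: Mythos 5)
Your proposal identifies the correct heuristic --- the spectral picture of $\mathcal{L}$ in \eqref{linearized} with principal eigenvalue $\lambda=1$ and eigenfunction $\varphi=Y\overline{W}'$, giving the $e^{-s}=(X+1)^{-1}$ rate --- but this is precisely the heuristic the paper states in the introduction and then deliberately does \emph{not} carry out. The actual proof avoids the spectral decomposition altogether. In its place is a Nash-type device: one proves a \emph{uniformly bounded} weighted $L^1$ functional $\int_0^\infty\frac{1}{u}|\phi(X,\psi)|\psi^\alpha\,d\psi\lesssim1$ with $\alpha$ close to $1$ (Lemma \ref{weight}, Proposition \ref{weightt}) and feeds it into a Nash--Gagliardo interpolation between that low-frequency moment and $\|\phi_\psi\|_{L^2}$. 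Combined with the weighted energy balance, this produces a closed nonlinear differential inequality for $\|\phi/\sqrt{u}\|_{L^2}^2$ whose cubic remainder decays faster each time the current rate is plugged back in; a finite number of iterations reaches the sharp exponent (Lemma \ref{L2decay} through Proposition \ref{h1decay}). The paper's ``nearly conserved quantity'' is that bounded moment, \emph{not} a decaying spectral coefficient, and the iteration is on the rate of the energy remainder, not on an eigenmode/transverse splitting. This route sidesteps the entire rigorous spectral project your proposal requires --- eigenfunction completeness, a quantitative spectral gap on $\mathrm{span}(\varphi)^\perp$, construction and decay of the adjoint eigenfunction, and invariance of the splitting under the nonlinear flow --- all of which are delicate because $\mathcal{L}$ is degenerate at $Y=0$ (where $\overline{W}(Y)\sim Y$) and has a singular potential.

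Two concrete gaps remain even granting the spectral machinery. First, the weight in $I(X)=\int\phi(X,\psi)\chi(\psi)\,d\psi$ cannot be a fixed function of $\psi$: the adjoint eigenfunction lives in the self-similar variable $Y=\psi/\sqrt{X+1}$, so its pull-back is genuinely $X$-dependent, and the clean ODE $I'+\tfrac{1}{X+1}I=N$ will not appear with a static $\chi$. Moreover, starting from the seed rate $\alpha_0=\tfrac12-\delta$, the quadratic remainder $N$ decays only like $(X+1)^{-(1-2\delta)}$, far from the $(X+1)^{-(2+\varepsilon)}$ you quote as sufficient, so the first pass does not deliver $I\lesssim(X+1)^{-1}$ and the bootstrap structure has to be built rather than asserted. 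Second, the passage to physical coordinates is not merely a matter of dividing $\phi$ by $u+\bar u$. Near the wall $\bar u\sim\psi^{1/2}/(X+1)^{1/4}$, and the $\dot{H}^1\hookrightarrow\dot{C}^{1/2}$ bound gives $|u-\bar u|\lesssim(X+1)^{-1}$ uniformly but with \emph{no} vanishing in $\psi$; the change-of-variables estimate (the term $B_1$ in the proof of Theorem \ref{physicaldecay}) requires $|u-\bar u|\lesssim\psi^{1/4}(X+1)^{-9/8}$, i.e.\ $\dot{W}^{1,4}\hookrightarrow\dot{C}^{3/4}$ regularity. The paper obtains this from a dedicated $L^4$ energy estimate $\|\phi_\psi\|_{L^4}^4\lesssim(X+1)^{-11/2}$ (Theorem \ref{l4decay}); the Gagliardo--Nirenberg step you invoke, applied only to the $L^2$ decays of $\phi,\phi_\psi$, does not supply it.
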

\begin{remark}\label{initialdata}
It is important to highlight that \eqref{decayassume} can be deduced from the assumptions outlined in Iyer's work \emph{\cite{MR4097332}}. The condition \eqref{decayassume} is natural since it is stated in the physical coordinate system, without the use of von Mises transformations.
\end{remark}
\begin{remark}
Clearly, \eqref{decayassume} can be verified provided the following localization condition holds for some $\kappa_{1}>0$ depending on $ \kappa $,
\begin{equation}\label{localization}
\left | (1+y)^{r_{1}} (u_{0}-\bar{u}_{0})(y) \right|\lesssim 1,\ \left|(1+y)^{r_{2}}\partial_{y}^{2}(u_{0}-\bar{u}_{0})(y)\right| \leq \kappa_{1} \ll 1, \  \emph{for}\  r_{1}>2, r_{2}> \frac{3}{2}.
\end{equation}
Furthermore, since $ u_{0}(y)\in C^{2,\alpha} $ \emph{( Oleinik's condition )} , by Sobolev interpolation inequality, \eqref{decayassume} still holds, provided that
\begin{equation}\label{localiza}
\left|(1+y)^{\Theta}(u_{0}-\bar{u}_{0})(y)\right|\leq \kappa_{2}\ll 1
\end{equation}
holds for some $ \Theta = \Theta(\alpha) $ depends on $ \alpha $.
\end{remark}

We emphasize that the convergence rate achieved in our main result (Theorem \ref{main}) is \emph{optimal}, which can be seen from the following example.
\begin{example}\label{optimal} For any $ A>0 $, one can check that
\begin{equation*}
u_{A}(x,y)=f'\left(\frac{y}{\sqrt{x+A}}\right)
\end{equation*}
is the solution of Prandtl equation \eqref{prandtl}, where $ f(\cdot) $ is the solution of Blasius equation \eqref{blasius}. Clearly, by the basic properties of the Blasius profile \emph{ (see Lemma \ref{Blasius})}, when $ A $ is close to $ 1 $, the initial data $ u_{A}(0,y) $ satisfies the assumption \eqref{decayassume}. However, when $ A\neq 1 $, and $ \frac{1}{2}\leq A\leq 2 $:
\begin{equation*}
\begin{aligned}
\left|\bar{u}(x,\sqrt{x+1})-u_{A}(x,\sqrt{x+1})\right|&=\left|f'(1)-f'\left(\frac{\sqrt{x+1}}{\sqrt{x+A}}\right)\right|\\
&\geq f''(2)\left|1-\frac{\sqrt{x+1}}{\sqrt{x+A}}\right|\geq \frac{f''(2)}{x+1}.
\end{aligned}
\end{equation*}
Hence, the convergence rate $ (x+1)^{-1} $ between the solution of the Prandtl equation and the Blasius profile can not be improved in general.

\end{example}

In our second main result, we prove the sharp convergence rate for \emph{general initial data}, without the assumption that the initial data be sufficiently close to $\bar{u}_{0}(y)$ in the sense of Theorem \ref{main}.
\begin{theorem}\label{main2}
Assume the initial data $u_{0}(y)$ of Prandtl's equation \eqref{prandtl} satisfies the assumptions in Theorem \ref{oleinik}. In addition,
\begin{equation}\label{additional}
\begin{cases}
0\leq u_{0}(y)\leq 1,\\
|u_{0}(y)-1|\lesssim \displaystyle\frac{1}{1+y^{4}},
\end{cases}
\quad \emph{for}\ y\in[0,\infty).
\end{equation}
Then the sharp convergence rates \eqref{mainresult} still hold, i.e. for $ X,x>0 $:
\begin{equation*}
\|u(X,\psi)-\bar{u}(X,\psi)\|_{L^{\infty}_{\psi\in\R^{+}}}\lesssim (X+1)^{-1},\quad \|u(x,y)-\bar{u}(x,y)\|_{L^{\infty}_{y\in\R^{+}}}\lesssim (x+1)^{-1}.
\end{equation*}
\begin{remark}Comparing with \emph{\cite{MR4657422}}, we only need the convergence of the initial data $u_{0}(y)$ to the outer flow at infinity at a rate of the reciprocal of a polynomial. Furthermore, we obtain the sharp decay rate $(x+1)^{-1}$, which is stronger than the decay rate $(x+1)^{-(\frac{1}{2}-)}$ obtained in \emph{\cite{MR4657422}}.

The additional condition $0\leq u_{0}(y)\leq 1$ ensures that we can construct suitable barrier functions. This condition covers the most physically relevant case when the flow is generated by the top outer flow and is slowed down by the viscosity near the boundary.
\end{remark}
\end{theorem}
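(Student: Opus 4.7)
The plan is to reduce Theorem \ref{main2} to the small-data result Theorem \ref{main}: at a sufficiently late station $x_{0}$, Serrin's Theorem \ref{serrin} guarantees that $u(x_{0},\cdot)$ is close to $\bar u(x_{0},\cdot)$ in $L^{\infty}$, and, combined with uniform polynomial localization of $1-u$ propagated by a barrier argument from the hypothesis $|u_{0}-1|\lesssim(1+y)^{-4}$, this upgrades via interpolation to the weighted Sobolev smallness required by \eqref{decayassume}. A scale-invariance rescaling then matches the reference Blasius profile and yields the sharp rate.

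\medskip
\noindent\textbf{Step 1 (Pointwise bounds and localization).} The hypothesis $0\le u_{0}\le 1$ together with the maximum principle applied to $u$ and to $1-u$ (both solving the same Prandtl equation) gives $0\le u(x,y)\le 1$. I would then propagate $|1-u(x,y)|\lesssim(1+y)^{-4}$ uniformly in $x$ by a barrier argument in von Mises coordinates, where $g:=1-u^{2}$ satisfies the degenerate parabolic equation $g_{X}=\sqrt{1-g}\,g_{\psi\psi}$. A super-solution of the form $C(1+\psi)^{-4}$ augmented by a suitable $X$-dependent correction (dominating the positive lower-order contribution $\sqrt{1-g}\,g_{\psi\psi}$) yields the claimed decay by comparison; since $\psi\sim y$ as $y\to\infty$, this transfers to the physical variable. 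Analogous bounds on $\partial_{y}(1-u)$ and $\partial_{y}^{2}(1-u)$ follow by differentiating the equation and iterating the barrier scheme, using the instant smoothing of Wang--Zhang \cite{MR4327905} and Guo--Iyer \cite{MR4232771}.

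\medskip
\noindent\textbf{Step 2 (Smallness at a restart time).} Fix $\kappa'>0$ to be chosen. By Serrin's Theorem \ref{serrin} there exists $x_{0}\gg 1$ with $\|u(x_{0},\cdot)-\bar u(x_{0},\cdot)\|_{L^{\infty}_{y}}\le\kappa'$; parabolic interior regularity on the strip $[x_{0}/2,x_{0}]$ upgrades this to $L^{\infty}$ smallness of $\partial_{y}^{j}(u-\bar u)(x_{0},\cdot)$ on compact sets for $j=1,2$. Combining with the localization of Step 1 (applied to both $u$ and the explicit Blasius $\bar u$) gives
\[
|\partial_{y}^{j}(u-\bar u)(x_{0},y)|\ \lesssim\ \min\bigl(\kappa'_{j},\ (1+y)^{-\sigma_{j}}\bigr),\qquad j=0,1,2,
\]
with $\kappa'_{j}\to 0$ as $x_{0}\to\infty$; integrating against $(1+y)^{2}$ and optimizing the cutoff produces $\sum_{j\le 2}\|(1+y)\partial_{y}^{j}(u(x_{0},\cdot)-\bar u(x_{0},\cdot))\|_{L^{2}_{y}}\to 0$. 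To match the reference profile $f'(y)$ of Theorem \ref{main}, apply the scale invariance of the Prandtl equation with $\mu:=\sqrt{x_{0}+1}$: the rescaled function $U(X,Y):=u(x_{0}+\mu^{2}X,\mu Y)$ solves the Prandtl equation and satisfies $U(0,Y)-f'(Y)=u(x_{0},\mu Y)-\bar u(x_{0},\mu Y)$, since $\bar u(x_{0},\mu Y)=f'(Y)$. The smallness of the previous display transfers to the hypothesis \eqref{decayassume} for $U$ once $\kappa'$ is small enough relative to $\mu$, and Theorem \ref{main} applied to $U$ yields $\|U(X,\cdot)-f'(\cdot/\sqrt{X+1})\|_{L^{\infty}_{Y}}\lesssim(X+1)^{-1}$. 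Undoing the change of variables via $X+1=(x+1)/(x_{0}+1)$ gives $\|u(x,\cdot)-\bar u(x,\cdot)\|_{L^{\infty}_{y}}\lesssim(x_{0}+1)/(x+1)\lesssim(x+1)^{-1}$ with constant depending on $u_{0}$ through $x_{0}$.

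\medskip
\noindent\textbf{Main obstacle.} The hardest step is the barrier construction in Step 1. Unlike the Gaussian barrier $\partial_{\psi}\bar w_{0}$ used by Wang--Zhang \cite{MR4657422}, the polynomial weight $(1+\psi)^{-4}$ is not itself a super-solution of the von Mises equation, so a careful $X$-dependent correction---a natural candidate being a multiple of the Blasius profile $1-\bar w$ which satisfies the equation with equality, glued to a polynomial tail in the outer region---is required to make the comparison principle work. A secondary technical point is the final rescaling: the weighted $L^{2}$ norm of the second $Y$-derivative picks up a factor $\mu^{3}\sim(x_{0}+1)^{3/2}$, which must be absorbed by the quantitative smallness $\kappa'_{2}$ obtained from Serrin's theorem combined with parabolic interpolation and the polynomial localization; this forces $\kappa'$ to depend quantitatively on $x_{0}$ and on the interior regularity constants available at $x_{0}$.
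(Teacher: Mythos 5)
Your plan and the paper's proof both hinge on the scaling invariance of the von Mises equation and on reducing to the small-perturbation result Theorem \ref{main}, but they do so in genuinely different ways, and the difference is precisely where your plan still has a real gap.

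The paper never restarts the evolution at a late time and never invokes Serrin's theorem. Instead it works entirely at $X=0$: Lemma \ref{mainlm1} shows $|w(0,\psi)-\bar w(0,\psi)|\lesssim \psi/(1+\psi^{4})$ in von Mises variables; Lemma \ref{mainlm3} then sandwiches the initial trace $w(0,\psi)$ between two \emph{rescalings} $g_{-}(\lambda\psi)\le w(0,\psi)\le g_{+}(\Lambda\psi)$ of small perturbations $g_{\pm}=\bar w(0,\cdot)\pm\kappa B$ of the Blasius data, with $\lambda\ll1$, $\Lambda\gg1$; the solutions evolving from $g_{\pm}$ fall under Theorem \ref{main}, and by the scale invariance of $w_{X}=\sqrt{w}w_{\psi\psi}$ the solutions evolving from the rescaled data are just rescalings of those. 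The parabolic comparison principle applied once, at $X=0$, together with Lemma \ref{mainlm4} (a rescaled Blasius profile is $O((X+1)^{-1})$-close to the original, in the weighted sense encoded by $M(X,\psi)$), finishes the proof. Because the barriers are actual Prandtl solutions already known (from Theorem \ref{main}) to converge at rate $(X+1)^{-1}$, the paper never needs to propagate any localization of $1-u$ forward in $x$.

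Your plan instead waits for Serrin's theorem to produce smallness at some large station $x_{0}$, rescales around $x_{0}$, and re-enters Theorem \ref{main}. The rescaling bookkeeping you sketch in Step 2 is essentially sound --- the $\mu^{3}$ loss on the squared weighted $H^{2}$ norm is compensated because the parabolic regularity length scale at $x_{0}$ is $\sqrt{x_{0}}$, so $\partial_{y}^{2}(u-\bar u)$ gains a factor $x_{0}^{-1}$ pointwise, and the interplay is exactly borderline (a fact you are right to flag). But your Step 1, propagating $|1-u(x,y)|\lesssim(1+y)^{-4}$ together with two derivatives uniformly in $x$, is a genuine gap, not just a technicality: as you note, $(1+\psi)^{-4}$ is convex so it is a \emph{sub}solution of $g_{X}=\sqrt{w}g_{\psi\psi}$, and a suitable $X$-dependent supersolution is not constructed. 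Moreover you need this not only for $1-u$ but for its first two $y$-derivatives, and it is not clear the barrier scheme closes at that level of regularity with the hypotheses of Theorem \ref{oleinik} alone. This is exactly the difficulty the paper's construction bypasses: since the sandwich is set up at $X=0$ and then carried by the comparison principle, the required far-field control for $X>0$ comes for free from the already-established decay of the reference solutions $w_{\pm}$, rather than having to be re-derived for $u$ itself. If you want to salvage your route, the missing ingredient is precisely a rigorous uniform-in-$x$ polynomial localization (and Schauder-type bound) for $1-u$, which would take about as much work as the paper's Lemma \ref{mainlm3}; so while the plan is viable in principle, the paper's sandwich-from-$X=0$ argument is shorter and more robust.
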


\subsection{Main Ideas}\label{idea}
Now let us introduce the main ideas in this paper.

\textit{(a) Low frequency “conserved” quantity.} 

Inspired by \cite{MR4097332}, our approach is based on deriving the decay rate through energy estimates. Typically, we can establish the following inequality:
\begin{equation}\label{formalenergy}
\frac{d}{dX}\|\phi\|_{L^{2}_{\psi}}^{2}+\|\phi_{\psi}\|_{L^{2}_{\psi}}^{2}\leq \mathcal{E}(X),
\end{equation}
where the remainder term $ \mathcal{E}(X) $  decays in $ X $ at a certain rate.   The decay estimate in $ L^{2} $ for \eqref{eqphi} stems from \eqref{formalenergy}, although this derivation may not be straightforward. Assuming the existence of a low-frequency quantity $ \|\phi\|_{Y} $ that we can control, i.e. $ \|\phi\|_{Y} \lesssim1$ for $X\ge0$, and the validity of the following interpolation inequality:
\begin{equation}\label{interpolation}
\|\phi\|_{L^{2}_{\psi}}\lesssim \|\phi\|_{Y}^{1-s}\|\phi_{\psi}\|_{L^{2}_{\psi}}^{s},\quad 0<s<1.
\end{equation}
Substitute \eqref{interpolation} into \eqref{formalenergy}, we can deduce the differential inequality, under the condition that $ \|\phi\|_{Y} $ remains uniformly bounded over time:
\begin{equation}\label{de}
\frac{d}{dX}\|\phi\|_{L^{2}_{\psi}}^{2}+\|\phi\|_{L^{2}_{\psi}}^{\frac{2}{s}}\leq \mathcal{E}(X).
\end{equation}
Clearly, \eqref{de} indicates the decay rate of $ \|\phi\|_{L^{2}_{\psi}} $, marking the initial step towards deriving the decay rate of higher-order derivatives of $ \phi $.

The origins of this idea can be traced back at least to Nash's seminal work \cite{Nash58} on deriving the decay rate of fundamental solutions for parabolic equations. An important observation in \cite{Nash58} is the inequality (formulated here specifically to the one-dimensional case)
\begin{equation}\label{nash}
\|\Gamma\|_{L^{2}}\lesssim \|\Gamma\|_{L^{1}}^{\frac{2}{3}}\|\partial\Gamma\|_{L^{2}}^{\frac{1}{3}}.
\end{equation}
By combining the $ L^{1} $ conservation principle for the fundamental solution $ \Gamma $, \eqref{nash}, and the standard parabolic energy estimate, Nash deduced the decay rate of $ \Gamma $ in $ L^{2} $. It is worthwhile to note that the low-frequency quantity identified in \cite{Nash58} and \cite{MR4097332} only suggests a decay rate of $ (X+1)^{-\frac{1}{4}} $ for the $ L^{2} $ norm of $\phi$, resulting in a decay rate of $ (X+1)^{-\frac{1}{2}} $ for the $ L^{\infty} $ norm of $\phi$, which falls short of the anticipated decay rate $(X+1)^{-1}$.

%However, in equation \eqref{eqphi}, it is not immediately apparent how to identify a conserved or dissipated quantity with a frequency lower than $ L^{1} $. Drawing inspiration from Example \ref{optimal}, 
To improve the convergence rate further, we need to obtain stronger localization properties than just $L^1$ bounds. It is natural to consider the quantity
\begin{equation*}
I(X)=\int_{0}^{\infty}\psi^{\alpha}|\phi(X,\psi)|d\psi,
\end{equation*}
where $ \alpha $ is a positive parameter to be determined later. As $ \alpha>0 $, the quantity $ I(X) $ exhibits a lower frequency than the $ L^{1} $ norm, suggesting that the boundedness of $ I(X) $ will result in a more rapid decay rate in $ L^{2} $ than $ (X+1)^{-\frac{1}{4}} $. There are two technical obstacles:
\begin{itemize}
\item[(i)] The non-smooth nature of $ |\phi| $ prevents a direct differentiation of $ I(X) $;
\item[(ii)] It is not immediately clear whether $ I(X) $ remains finite for all $ X $.
\end{itemize}
To overcome them, we introduce a small parameter $ \epsilon>0 $ and a large parameter $ N>1 $, and consider
\begin{equation*}
I_{\epsilon,N}(X)=\int_{0}^{\infty}\psi^{\alpha}\left(\phi^{2}+\epsilon^{2}\right)^{\frac{1}{2}}\chi_{N}(\psi)d\psi,
\end{equation*}
where $ \chi_{N} $ is a standard cut-off function associated with parameter $ N $. Hence the uniform bound on $ I_{\epsilon, N}(X) $ implies the boundedness of $ I(X) $. To establish the bound of $ I_{\epsilon, N}(X) $, it is sufficient to demonstrate that for suitable $s_1,s_2>0$, and some $ f_{1},f_{2}\in L^{1}(\R^{+}) $,
\begin{equation}
I_{\epsilon,N}'(X)\lesssim o(\epsilon^{s_{1}})+o(N^{-s_{2}})+f_{1}(X)I(X)+ f_{2}(X).
\end{equation}
See Section \ref{L1} for more details.

\emph{(b) Improve the decay rate by iteration techniques.} 

Our objective is to establish the $ L^{2} $ decay rate for $ \phi $ based on the differential inequality \eqref{de}. However, if the remainder term $ \mathcal{E}(X) $ does not decay rapidly, we may not achieve the desired decay rate for $ \|\phi\|_{L^{2}_{\psi}} $. Specifically, if we expect $ \|\phi\|_{L^{2}_{\psi}}\lesssim (X+1)^{-Q} $ for $ X>0 $, then it is necessary to show that
\begin{equation}\label{decayfast}
\mathcal{E}(X)\lesssim (X+1)^{-\frac{2Q}{s}}.
\end{equation}

In the first step of our energy estimates, the evaluation of the remainder term $ \mathcal{E}(X) $ relies only on the bounds in Theorem \ref{iyer}. Consequently, initially, $ \mathcal{E}(X) $ does not satisfy the decay rate specified in \eqref{decayfast}.

To tackle this issue, it is important to recognize that $ \mathcal{E}(X) $ can be formally expressed as terms in the form of
\begin{equation}\label{polynomial}
\int_{0}^{\infty} \mathcal{Q}(\phi,\phi_{X})d\psi,
\end{equation}
involving polynomials $ \mathcal{Q} $ of at least cubic order. While it is not possible to directly achieve the desired decay rate, we can attain a decay rate faster than the initial one indicated in Theorem \ref{iyer}.

Upon establishing a decay rate faster than that stipulated in Theorem \ref{iyer}, the decay rate of $ \mathcal{E}(X) $ accelerates further due to the nonlinear structure outlined in \eqref{polynomial}. Reintegrating this improved decay rate of $ \mathcal{E}(X) $ back into \eqref{de}, we can achieve a faster decay rate for $ \|\phi\|_{L^{2}} $ compared to the previous iteration, resulting in an enhanced decay rate for $ \mathcal{E}(X)$.

By iteratively applying this process for a finite number of steps, we can ultimately attain the targeted decay rate for $ \|\phi\|_{L^{2}} $ and consequently the anticipated decay rate for the high-order energy of $ \phi $ through successive energy estimates. We refer to Section \ref{enhanced} and Section \ref{proofmain} for the detailed proof.

\emph{(c) Remove the smallness assumption.}

As in Wang and Zhang \cite{MR4657422}, the proof is based on the comparison principle of the (transformed) Prandtl equation. In Theorem \ref{main2}, we only assume the initial data $u_{0}(y)$ converges to the outer flow in a polynomial rate, hence we can not construct barrier functions as in Wang and Zhang's work \cite{MR4657422}. The main feature of our approach is to combine the scaling invariance and the result in Theorem \ref{main} to construct suitable barrier functions, which allows for much weaker assumptions on the asymptotic behavior of the initial data. We refer to subsection \ref{pfmain2} for more details.
\subsection{Structure of the Paper}

The remainder of the paper is structured as follows: In Section \ref{Preliminary}, we will introduce several auxiliary lemmas that will be frequently referenced in our proof. In Section \ref{L1}, we will establish the weighted $ L^{1} $ estimate, which is a key ingredient in improving decay rates. In Section \ref{enhanced} and Section \ref{proofmain}, we give a detailed proof of the main theorem. We prove Theorem \ref{main} in subsection \ref{pfmain1}, and we prove Theorem \ref{main2} in subsection \ref{pfmain2}. The proof is divided into several steps. Lastly, For the sake of completeness, we will give a self-contained proof of Theorem \ref{iyer} in Appendix \ref{pfiyer}.\\

\textit{Notations.} In this article, we adopt the following notation conventions:
\begin{itemize}
\item[(1)] We use $ A\lesssim B $ means $ A\leq c B $ for some absolute constant $ c $. The notation $ A\sim B $ means $ A\lesssim B $ and $ B\lesssim A $. Additionally, $ A \lesssim_{\star} B $ implies $ A\leq c B $ with $ c $ dependent on a specific quantity $ \star $, i.e., $ c=c(\star) $. 

\item[(2)] The notation $ \p_{l}\phi $ denotes the partial derivative of $ \phi $ with respect to the variable $ l $, and $ \phi_{l}=\p_{l}\phi $. 
\end{itemize}

\section{Preliminary}\label{Preliminary}
In this section, we present several preliminary results that will be used frequently.

The first lemma addresses the basic properties of the Blasius profile, some of which can be found in the existing literature. For the sake of completeness, we summarize them here.

\begin{lemma}\label{Blasius}
Assume $ f(\eta) $ is the solution of the Blasius equation \eqref{blasiusode}. Then:
\begin{itemize}
\item[(1)]
\begin{equation*}
f(\eta), f'(\eta), f''(\eta)\geq 0, \quad f'''(\eta)\leq 0.
\end{equation*}

\item[(2)] For $ 0\leq\eta\leq 1$:
\begin{equation*}
f(\eta)\sim \eta^{2},\ f'(\eta)\sim \eta,\ f''(\eta)\sim 1,\  -f'''(\eta)\sim \eta^{2}.
\end{equation*}

\item[(3)] For $ \eta\geq 1 $, there exists $ c>0 $ such that:
\begin{equation*}
f(\eta)\sim\eta,\ f'(\eta)\sim 1,\ 0\leq f''(\eta)\lesssim e^{-c\eta^{2}},\ -e^{-c\eta^{2}}\lesssim f'''(\eta)\leq 0.
\end{equation*}

\item[(4)] Particularly, the Blasius profile $ \bar{u} $ satisfies:
\begin{equation*}
-\frac{\bar{u}^{2}(x,y)}{x+1}\lesssim \bar{u}_{yy}(x,y)\leq 0.
\end{equation*}
\end{itemize}
\end{lemma}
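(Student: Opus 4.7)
The plan is to analyze $f$ directly through the ODE~\eqref{blasiusode}, exploiting the identity $f''' = -\tfrac{1}{2} f f''$ and the integrating-factor representation
\begin{equation*}
f''(\eta) = f''(0) \exp\Big(-\tfrac{1}{2}\int_{0}^{\eta} f(s)\,ds\Big),
\end{equation*}
together with the classical fact (Weyl's shooting/scaling argument for the Blasius boundary value problem) that~\eqref{blasiusode} admits a solution with $\alpha := f''(0) > 0$. Granting this, the representation immediately gives $f''(\eta) > 0$ for all $\eta \geq 0$. Since $f(0)=f'(0)=0$, integrating yields $f'(\eta), f(\eta) > 0$ for $\eta > 0$, and then $f''' = -\tfrac{1}{2} f f'' \leq 0$, which establishes~(1).

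For~(2), I would Taylor-expand at the origin using $f(0)=f'(0)=f'''(0)=0$ and $f''(0)=\alpha$. On the compact interval $[0,1]$ the quantity $\int_{0}^{\eta} f$ is bounded, so the representation above shows $f''(\eta) \sim \alpha \sim 1$; integrating in turn gives $f'(\eta)\sim \eta$ and $f(\eta)\sim \eta^{2}$, and substituting into $-f'''= \tfrac{1}{2}ff''$ delivers $-f'''(\eta)\sim \eta^{2}$.

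For~(3), the monotonicity of $f'$ from~(1) together with $f'(\infty)=1$ squeezes $f'(1) \leq f'(\eta) \leq 1$ on $[1,\infty)$, and part~(2) supplies $f'(1) > 0$; hence $f'\sim 1$ and $f \sim \eta$ on $[1,\infty)$. Feeding the lower bound $f(s) \gtrsim s$ for $s\geq 1$ back into the exponent of the integrating-factor identity yields $f''(\eta) \lesssim e^{-c\eta^{2}}$ for some $c>0$; the matching bound $|f'''(\eta)| \lesssim e^{-c'\eta^{2}}$ then follows from $|f'''| = \tfrac{1}{2} f f'' \lesssim \eta\, e^{-c\eta^{2}}$ after slightly shrinking the exponent.

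Finally, for~(4), a direct computation from $\bar{u}(x,y) = f'(y/\sqrt{x+1})$ gives $\bar{u}_{yy}(x,y) = f'''(\eta)/(x+1)$, which is nonpositive by~(1). The matching lower bound reduces to the pointwise inequality $|f'''(\eta)| \lesssim (f'(\eta))^{2}$ on $[0,\infty)$, which follows immediately from parts~(2)--(3): both sides behave like $\eta^{2}$ for $\eta \leq 1$, while for $\eta \geq 1$ the right side is bounded below and the left side decays exponentially. The only genuinely nontrivial ingredient is the existence of a Blasius solution with $f''(0) > 0$ satisfying $f'(\infty)=1$; I would simply cite this as the known solvability of the Blasius BVP, after which every remaining assertion is an essentially mechanical consequence of the ODE and the integrating-factor identity.
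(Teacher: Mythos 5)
Your proof is correct and follows essentially the same path as the paper: positivity and Gaussian decay of $f''$ come from the integrating-factor formula, the small-$\eta$ asymptotics from elementary integration, and part (4) from the pointwise comparison $-f'''(\eta) \lesssim (f'(\eta))^2$. The only notable variations are cosmetic: you invoke $f''(0)>0$ from the classical existence theory while the paper derives $f''(\eta_0)>0$ for some $\eta_0$ directly from the boundary conditions $f'(0)=0$, $f'(\infty)=1$ (logically equivalent once the integrating factor is applied), and your derivation of $-f'''\sim\eta^2$ by substituting the already-established $f\sim\eta^2$, $f''\sim 1$ into the ODE identity $-f'''=\tfrac12 f f''$ is a modest streamlining of the paper's Taylor computation of $f^{(3)}(0)$, $f^{(4)}(0)$, $f^{(5)}(0)$.
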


\begin{proof}
To begin, as $ f'(\infty)=1 $ and $ f'(0)=0 $, $ f'' $ must be positive at some $ \eta_{0} $. Considering $ f'''+\frac{1}{2}ff''=0 $, we have:
\begin{equation}\label{twice}
f''(\eta)=f''(\eta_{0})\exp\left(-\frac{1}{2}\int_{\eta_{0}}^{\eta}f(\tau)d\tau\right)>0\quad {\rm for\ all}\  \eta \geq 0.
\end{equation}
Consequently:
\begin{equation*}
f'(\eta)=\int_{0}^{\eta}f''(\tau)d\tau> 0,\quad f(\eta)=\int_{0}^{\eta}f'(\tau)d\tau> 0\quad {\rm for\ all}\  \eta > 0.
\end{equation*}
Thus, $ f'''=-\frac{1}{2}ff''\leq 0 $.

By Taylor's formula:
\begin{equation*}
f(\eta)\sim\eta^{2},\quad f'(\eta)\sim \eta,\quad \text{for}\ \eta\leq 1. 
\end{equation*}
Since $ f(0)=0 $, one has $ f'''(0)=-\frac{1}{2}f(0)f''(0)=0 $. Moreover,
\begin{equation*}
\begin{aligned}
&f^{(4)}(0)=-\frac{1}{2}\big[f(0)f'''(0)+f'(0)f''(0)\big]=0,\\
&f^{(5)}(0)=-\frac{1}{2}\big[f(0)f^{(4)}(0)+2f'(0)f'''(0)+(f''(0))^{2}\big]<0.
\end{aligned}
\end{equation*}
Utilizing Taylor's formula again, one has
\begin{equation*}
-f'''(\eta)\sim\eta^{2},\quad \text{for}\ \eta\leq 1.
\end{equation*}

For the third statement, since $ f'(\infty)=1 $,
\begin{equation*}
\lim_{\eta\to \infty}\frac{f(\eta)}{\eta}=1,
\end{equation*}
 there exist $ C_{1},C_{2}>0 $ such that:
\begin{equation*}
C_{1}\eta\leq f(\eta) \leq C_{2}\eta,\quad \text{for}\ \eta\geq 1.
\end{equation*}
Hence, when $\eta\geq 1$, by \eqref{twice}
\begin{equation*}
f''(\eta)\leq f''(1)\exp\left(-\frac{1}{2}\int_{1}^{\eta}C_{1}\eta' d\eta'\right)\lesssim e^{-\frac{C_{1}}{2}\eta^{2}},
\end{equation*}
and the same exponential decay bound holds for $ f'''(\eta) $ due to Blasius equation.
\end{proof}

In the second lemma, we state an elementary but useful property that establishes a comparison principle between $u$ and $\bar{u}$ in the von Mises coordinate system. It is worth noting that a similar property was previously proved in \cite{MR4657422}, albeit with the involvement of abstract constants. The constants here are precise and hence independent of data.
\begin{lemma}\label{comparison}
Under the assumptions in Theorem \ref{main}, we have
\begin{equation*}
\frac{1}{2}\bar{u}(X,\psi)\leq u(X,\psi)\leq 2\bar{u}(X,\psi)\quad \text{in}\ \R^{+}\times\R^{+}.
\end{equation*}
Moreover,
\begin{equation*}
\frac{1}{2}y(X,\psi;\bar{u})\leq y(X,\psi;u)\leq 2y(X,\psi;\bar{u}).
\end{equation*}
\end{lemma}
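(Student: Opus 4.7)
The plan is to work in von Mises coordinates with the squared speeds $w=u^{2}$ and $\bar w = \bar u^{2}$, which satisfy $w_X - \sqrt{w}\,w_{\psi\psi}=0$, and to establish the equivalent statement $\tfrac14\bar w\leq w\leq 4\bar w$ via the parabolic maximum principle, using $4\bar w$ as an upper barrier and $\tfrac14\bar w$ as a lower barrier. For any constant $c>0$, a direct calculation gives
\[
\partial_{X}(c\bar w)-\sqrt{c\bar w}\,(c\bar w)_{\psi\psi}=c(1-\sqrt c)\sqrt{\bar w}\,\bar w_{\psi\psi},
\]
and since $\bar w_{\psi\psi}=2\bar u_{yy}/\bar u\leq 0$ by Lemma~\ref{Blasius}(4), this quantity is $\geq 0$ for $c\geq 1$ and $\leq 0$ for $c\leq 1$. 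In particular, $4\bar w$ is a supersolution and $\tfrac14\bar w$ is a subsolution of the quasilinear equation.

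Setting $h:=w-4\bar w$ and using the factorization $\sqrt w-\sqrt{\bar w}=(w-\bar w)/(\sqrt w+\sqrt{\bar w})$, subtracting the equations for $w$ and $4\bar w$ yields the linear parabolic inequality
\[
h_{X}-\sqrt w\,h_{\psi\psi}+\tilde A\,h=\frac{12\,\bar w\,\bar w_{\psi\psi}}{\sqrt w+\sqrt{\bar w}}\leq 0,\qquad \tilde A:=-\frac{4\bar w_{\psi\psi}}{\sqrt w+\sqrt{\bar w}}\geq 0,
\]
and an analogous inequality with a nonnegative zeroth-order coefficient and nonpositive right-hand side holds for $k:=\tfrac14\bar w-w$. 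On the lateral parabolic boundary $h(X,0)=k(X,0)=0$, and $h(X,\infty)=-3$, $k(X,\infty)=-\tfrac34$, all $\leq 0$, so the only remaining ingredient before invoking the maximum principle is the initial check $\tfrac14\bar w_{0}(\psi)\leq w_{0}(\psi)\leq 4\bar w_{0}(\psi)$ for all $\psi>0$.

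This initial check is the main obstacle: a uniform $L^{\infty}$ bound on $u_{0}-\bar u_{0}$ is useless as $\psi\to 0$, where both $w_{0}$ and $\bar w_{0}$ vanish. In physical coordinates, the task is to show $\tfrac12\bar u_{0}(\bar y)\leq u_{0}(y)\leq 2\bar u_{0}(\bar y)$ where $y,\bar y$ are linked by $\psi=\int_{0}^{y}u_{0}=\int_{0}^{\bar y}\bar u_{0}$. From \eqref{decayassume}, Gagliardo--Nirenberg in one dimension gives $\|u_{0}-\bar u_{0}\|_{L^{\infty}}+\|(u_{0}-\bar u_{0})'\|_{L^{\infty}}\lesssim\kappa$, and in particular $|u_{0}'(0)-f''(0)|\lesssim\kappa$. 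For $\bar y$ bounded below by a fixed small constant $c_{0}$, $\bar u_{0}(\bar y)$ is bounded below; the identity $\int_{0}^{\bar y}(u_{0}-\bar u_{0})=-\int_{y}^{\bar y}\bar u_{0}$ together with the Lipschitz bound on $\bar u_{0}$ yields $|y-\bar y|\lesssim\kappa$, so $u_{0}(y)=\bar u_{0}(y)+O(\kappa)=\bar u_{0}(\bar y)(1+O(\kappa))$. For $0<\bar y<c_{0}$, Oleinik's condition $u_{0}'(0)>0$ legitimizes the Taylor expansions $\bar u_{0}(\bar y)=f''(0)\bar y+O(\bar y^{2})$ and $u_{0}(y)=u_{0}'(0)y+O(y^{2})$, and the matching $\int_{0}^{y}u_{0}=\int_{0}^{\bar y}\bar u_{0}$ forces $y/\bar y=\sqrt{f''(0)/u_{0}'(0)}\,(1+O(c_{0}))=1+O(\kappa+c_{0})$, so again $u_{0}(y)/\bar u_{0}(\bar y)=1+O(\kappa+c_{0})$. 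Fixing $c_{0}$ small and then $\kappa$ small enough makes both initial inequalities hold strictly.

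The parabolic maximum principle applied to the linear inequalities for $h$ and $k$ now gives $h\leq 0\leq k$ throughout $\mathbb{R}^{+}\times\mathbb{R}^{+}$, so $\tfrac14\bar w\leq w\leq 4\bar w$, and taking square roots yields $\tfrac12\bar u\leq u\leq 2\bar u$. For the height comparison, fix $X>0$ and set $F(z):=\int_{0}^{z}\bar u(X,s)\,ds$; integrating the pointwise comparison in $s$ from $0$ to $y$ gives $\tfrac12 F(y)\leq\psi=F(\bar y)\leq 2F(y)$. Monotonicity of $\bar u$ in $y$ produces the auxiliary estimates $F(2\bar y)\geq 2F(\bar y)$ and $F(\bar y/2)\leq\tfrac12 F(\bar y)$, obtained by comparing the integral of $\bar u$ on the outer half of $[0,2\bar y]$ (respectively $[0,\bar y]$) with the inner half. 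Combining these with the previous two-sided bound and the strict monotonicity of $F$ yields $\tfrac12\bar y\leq y\leq 2\bar y$, completing the proof.
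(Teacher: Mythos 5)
Your main comparison argument is essentially the paper's: set $V=w-4\bar w$ (your $h$), derive a linear parabolic inequality with nonnegative zeroth-order coefficient and nonpositive source, verify the parabolic boundary data, and invoke the maximum principle; your algebraic route (factoring $\sqrt w-\sqrt{\bar w}$ and splitting $w-\bar w=h+3\bar w$) is an equivalent reformulation of the paper's bound $u-\bar u\le u-2\bar u$. The initial-condition check at $\psi=0$ via Taylor expansion and matching of $y,\bar y$ is also close in spirit to the paper's, which instead compares $\psi$ with $f(y)$ and applies the mean-value theorem; your version adds the auxiliary threshold $c_0$ but the substance is the same.

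The height-comparison step, however, has a genuine gap. Your chain $\tfrac12 F(y)\le\psi\le 2F(y)$ is obtained by integrating a \emph{physical} pointwise comparison $\tfrac12\bar u(X,s)\le u(X,s)\le 2\bar u(X,s)$ over $s\in[0,y]$, but the maximum principle only delivers the comparison in von Mises coordinates, i.e.\ between $u$ and $\bar u$ evaluated at the same $\psi$, which corresponds to \emph{different} physical heights $y(X,\psi;u)$ and $y(X,\psi;\bar u)$. The physical pointwise comparison is never established and does not follow formally from the von Mises one. The fix is actually shorter than what you wrote: since $y(X,\psi;v)=\int_0^\psi\frac{d\psi'}{v(X,\psi')}$ for $v\in\{u,\bar u\}$ (the integral converges because $\bar u(X,\psi')\sim\sqrt{\psi'}$ near zero), inserting $\tfrac12\bar u(X,\psi')\le u(X,\psi')\le 2\bar u(X,\psi')$ into the integrand immediately gives $\tfrac12\,y(X,\psi;\bar u)\le y(X,\psi;u)\le 2\,y(X,\psi;\bar u)$, with no need for the auxiliary monotonicity estimates on $F$.
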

\begin{proof}For convenience, let's denote $ \psi(y) := \psi(x,y;u)|_{x=0} $. Consequently, 
\begin{equation*}
\psi(y)=\int_{0}^{y}\bar{u}_{0}(s)ds+\int_{0}^{y}\left[u_{0}(s)-\bar{u}_{0}(s)\right]ds=f(y)+\int_{0}^{y}\left[u_{0}(s)-\bar{u}_{0}(s)\right]ds.
\end{equation*}
By our assumptions in Theorem \ref{main}, Sobolev embedding, and the boundary condition $ u_{0}(0)=\bar{u}_{0}(0)=0 $, we obtain
\begin{equation}\label{Lm2}
f(y)-\kappa c(y)\leq \psi(y)\leq f(y)+\kappa c(y),\quad c(y)=
\begin{cases}
y^{2},\quad y\leq 1;\\
1,\quad y\geq 1.
\end{cases}
\end{equation}
Since
\begin{equation*}
\lim_{y\to\infty}u_{0}(y)=\lim_{y\to\infty}\bar{u}_{0}(y)=1.
\end{equation*}
Hence, for $ \kappa\leq \frac{1}{20} $, there exists an absolute constant $ y_\ast \geq 1 $, dependent solely on $ \bar{u}_{0} $, such that
\begin{equation*}
\frac{9}{10} \leq u_{0}(y), \bar{u}_{0}(y) \leq \frac{11}{10},\quad {\rm for}\ y\geq y_\ast.
\end{equation*}
Hence, based on \eqref{Lm2}, if $ \psi(y)\geq f(y_\ast) +1 $, it implies $y > y_\ast$ and thus
\begin{equation*}
\frac{9}{10} \leq u(0,\psi),\bar{u}(0,\psi) \leq \frac{11}{10},\quad \psi\geq  f(y_\ast) +1.
\end{equation*}
According to the characteristics of the Blasius profile (Lemma \ref{Blasius}), the aforementioned estimate \eqref{Lm2} indicates the existence of a constant $ A $ such that
\begin{equation*}
f\big((1-A\kappa)y\big)\leq \psi(y) \leq f\big((1+A\kappa)y\big).
\end{equation*}
Thus, following the definition of the von Mises transform, there exists $ y_{\psi} $ such that
\begin{equation*}
u(0,\psi)=u_{0}(y_{\psi}),\quad \text{with}\  \frac{1}{1+A\kappa}f^{-1}(\psi)\leq y_{\psi} \leq \frac{1}{1-A\kappa}f^{-1}(\psi).
\end{equation*}
Therefore, if $ \psi \leq f(y^{*})+1 $, by selecting a small $ \kappa $ and utilizing the mean-value theorem
\begin{equation*}
\begin{aligned}
|u(0,\psi)-\bar{u}(0,\psi)|&=|u_{0}(y_{\psi})-\bar{u}_{0}\big(f^{-1}(\psi)\big)|\\
&\leq |u_{0}(y_{\psi})-\bar{u}_{0}(y_{\psi})|+|\bar{u}_{0}(y_{\psi})-\bar{u}_{0}\big(f^{-1}(\psi)\big)|\\
&\leq \|u_{0}'-\bar{u}_{0}'\|_{L^{\infty}_{y}}\times y_{\psi}+\|u_{0}'\|_{L^{\infty}_{y}}\times |y_{\psi}-f^{-1}(\psi)|\\
&\lesssim \kappa f^{-1}(\psi)\leq\frac{1}{2}\bar{u}_{0}\big(f^{-1}(\psi)\big)=\frac{1}{2}\bar{u}(0,\psi).
\end{aligned}
\end{equation*}
Therefore, when a small value of $ \kappa $ is selected, we obtain
\begin{equation*}
\frac{1}{4}\bar{u}^{2}(0,\psi)\leq u^{2}(0,\psi)\leq 4 \bar{u}^{2}(0,\psi).
\end{equation*}
Let
\begin{equation*}
 V(X,\psi)=u^{2}(X,\psi)-4\bar{u}^{2}(X,\psi) = u^{2}\left(X,y(X,\psi;u)\right)-4\bar{u}^{2}\left(X,y(X,\psi;\bar{u})\right).
 \end{equation*}
By direct calculation,
\begin{equation*}
\p_{X}V(X,\psi)-u(X,\psi)\p_{\psi}^{2}V(X,\psi)=\frac{8\bar{u}_{yy}(u-\bar{u})}{\bar{u}}\leq \frac{8\bar{u}_{yy}(u-2\bar{u})}{\bar{u}}=\frac{8\bar{u}_{yy}V}{\bar{u}(u+2\bar{u})}.
\end{equation*}
By the previous arguments, 
\begin{equation*}
V(0,\psi)\leq 0;\quad V(X,0)=0,\quad V(X,\infty)<0.
\end{equation*}
Therefore, by employing the standard parabolic maximal principle, we deduce that $ V(X,\psi)\leq 0 $, which implies $ u(X,\psi)\leq 2\bar{u}(X,\psi) $. Similarly, $ \bar{u}(X,\psi)\leq 2u(X,\psi) $, thereby concluding the proof.
\end{proof}
\begin{lemma}\label{Lm3}
Under the assumptions in Theorem \ref{main}, there exists $ 0<c<1<C $ such that for $v\in\{u,\bar{u}\}$:
\begin{equation*}
\begin{aligned}
&c\frac{\psi^{1/2}}{(X+1)^{1/4}} \leq v(X,\psi) \leq C\frac{\psi^{1/2}}{(X+1)^{1/4}},\quad \psi\leq \sqrt{X+1};\\
&c< v(X,\psi) <C,\quad \psi\geq \sqrt{X+1}.
\end{aligned}
\end{equation*}
\end{lemma}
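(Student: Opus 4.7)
By Lemma \ref{comparison}, we have $\tfrac{1}{2}\bar u(X,\psi)\le u(X,\psi)\le 2\bar u(X,\psi)$, so it suffices to establish the claimed two-sided bounds for $v=\bar u$; the bounds for $u$ then follow at the cost of adjusting the absolute constants $c,C$ by a factor of $2$. The plan is to rewrite the relation defining the von Mises coordinate for the Blasius profile in self-similar variables and then read off the asymptotics from Lemma \ref{Blasius}.

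First I would compute $\psi$ for $\bar u$ explicitly. Using $\bar u(x,y)=f'(\eta)$ with $\eta=y/\sqrt{x+1}$, the change of variable $s\mapsto\tau=s/\sqrt{x+1}$ gives
\begin{equation*}
\psi=\int_{0}^{y}\bar u(x,s)\,ds=\sqrt{X+1}\int_{0}^{\eta}f'(\tau)\,d\tau=\sqrt{X+1}\,f(\eta),
\end{equation*}
so that the Blasius von Mises coordinate satisfies the key identity
\begin{equation*}
f(\eta)=\frac{\psi}{\sqrt{X+1}},\qquad \bar u(X,\psi)=f'(\eta).
\end{equation*}
Thus the two regimes in the statement correspond exactly to $\eta\le 1$ (small $\eta$) and $\eta\ge 1$ (large $\eta$), with the threshold determined by $f(1)\sim 1$, i.e.\ $\psi\sim\sqrt{X+1}$.

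Next, I would split into the two regimes and invoke Lemma \ref{Blasius}. For the inner region $\psi\le\sqrt{X+1}$, the monotonicity of $f$ and the estimate $f(\eta)\sim\eta^{2}$ for $\eta\le 1$ imply that $\eta\le C_{0}$ for some absolute constant, and that $\eta^{2}\sim\psi/\sqrt{X+1}$, hence $\eta\sim\psi^{1/2}/(X+1)^{1/4}$. Since $f'(\eta)\sim\eta$ in this range, we obtain
\begin{equation*}
\bar u(X,\psi)=f'(\eta)\sim\eta\sim\frac{\psi^{1/2}}{(X+1)^{1/4}},
\end{equation*}
which gives the desired two-sided bound. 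For the outer region $\psi\ge\sqrt{X+1}$, we have $f(\eta)\ge c_{0}>0$, so $\eta$ is bounded below away from zero; combined with $f'(\infty)=1$ and the positivity $f'(\eta)>0$ for $\eta>0$ from Lemma \ref{Blasius}(1), we conclude $c\le f'(\eta)\le C$ in this range, giving the second estimate.

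Finally, to transfer the $\bar u$ bounds to $u$ and to derive the auxiliary statement on $y(X,\psi;u)$ vs.\ $y(X,\psi;\bar u)$ (which will be needed in later sections), I would appeal directly to Lemma \ref{comparison}. The only technical point to watch is the interface $\psi\sim\sqrt{X+1}$: here both estimates are consistent because $\psi^{1/2}/(X+1)^{1/4}\sim 1$, so there is no real obstacle, only the bookkeeping of constants across the two regimes. I expect no substantive difficulty; the entire argument is a direct application of the self-similar structure of the Blasius profile together with the comparison principle already established.
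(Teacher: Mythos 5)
Your proof is correct and takes essentially the same route as the paper: compute $\psi(x,y;\bar u)=\sqrt{X+1}\,f(\eta)$ so that $\bar u(X,\psi)=f'\!\left(f^{-1}\!\left(\psi/\sqrt{X+1}\right)\right)$, read off the two regimes from the Blasius asymptotics in Lemma \ref{Blasius}, and transfer to $u$ via Lemma \ref{comparison}. The paper's proof is terser but identical in substance, merely stating the key identity and citing Lemmas \ref{Blasius} and \ref{comparison}.
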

\begin{proof}
Since
\begin{equation*}
\psi(x,y;\bar{u})=\int_{0}^{y}f\left(\frac{s}{\sqrt{x+1}}\right)ds=\sqrt{x+1}f\left(\frac{y}{\sqrt{x+1}}\right).
\end{equation*}
Therefore,
\begin{equation*}
\bar{u}(X,\psi)=f'\left[f^{-1}\left(\frac{\psi}{\sqrt{X+1}}\right)\right].
\end{equation*}
Therefore, Lemma \ref{Lm3} can be derived from the characteristics of the Blasius solution (Lemma \ref{Blasius}) and the comparability of $ u $ and $ \bar{u} $ in the von Mises coordinate system (Lemma \ref{comparison}).
\end{proof}
At last, we recall some standard inequalities.
\begin{lemma}\label{sobolev}Assume $ f $ is absolutely continuous in every bounded interval of $ \R $, then:
\begin{itemize}
    \item[(1)] Sobolev embedding $ \dot{W}^{1,p}\rightarrow \dot{C}^{\alpha} $, for $ t,s\in \R $:
    \begin{equation*}
    |f(t)-f(s)| \lesssim \|f'\|_{L^{p}}|t-s|^{\alpha},\quad \alpha=1-\frac{1}{p}.
    \end{equation*}
    \item[(2)] Agmon's inequality, for $ t\in \mathbb{R} $:
    \begin{equation*}
    |f(t)|^{2} \lesssim \|f\|_{L^{2}}\|f'\|_{L^{2}}
    \end{equation*}
    \item[(3)] Hardy's inequality,
    \begin{equation*}
    \int_{\R}\frac{|f(t)|^{2}}{|t|^{2\alpha}}dt \lesssim_{\alpha} \int_{\R}|f'(t)|^{2}|t|^{2-2\alpha}dt,\quad 0\leq \alpha<\frac{1}{2}.
    \end{equation*}
    In addition, if $ f(0)=0 $, then for $ \frac{1}{2}<\alpha\leq 1 $,
    \begin{equation*}
     \int_{\R}\frac{|f(t)|^{2}}{|t|^{2\alpha}}dx \lesssim_{\alpha} \int_{\R}|f'(t)|^{2}|t|^{2-2\alpha}dx.
     \end{equation*}
\end{itemize}
\end{lemma}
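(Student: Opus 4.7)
\textbf{Proof proposal for Lemma \ref{sobolev}.} All three inequalities are classical; the plan is to derive each from the fundamental theorem of calculus combined with a Hölder or Cauchy--Schwarz step, which keeps the argument short and self-contained.

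For part (1), the plan is to write
\begin{equation*}
f(t)-f(s)=\int_{s}^{t}f'(\tau)\,d\tau,
\end{equation*}
which is valid because $f$ is absolutely continuous on bounded intervals, and then apply Hölder's inequality with conjugate exponent $p'=p/(p-1)$ to bound the right-hand side by $\|f'\|_{L^{p}}|t-s|^{1/p'}=\|f'\|_{L^{p}}|t-s|^{1-1/p}$. This is entirely routine; the only minor observation is that on the compact interval $[s,t]$ the $L^{p'}$-norm of the constant function $1$ equals $|t-s|^{1/p'}$.

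For part (2), I would first reduce to the case where $f$ decays at $\pm\infty$, which is standard when $f\in L^{2}(\R)$ with $f'\in L^{2}(\R)$. Then for any $t\in\R$, write $|f(t)|^{2}=2\int_{-\infty}^{t}f(\tau)f'(\tau)\,d\tau$ and symmetrically $|f(t)|^{2}=-2\int_{t}^{\infty}f(\tau)f'(\tau)\,d\tau$. Averaging these two representations and applying Cauchy--Schwarz yields $|f(t)|^{2}\leq\int_{\R}|f||f'|\,d\tau\leq\|f\|_{L^{2}}\|f'\|_{L^{2}}$, which is Agmon's inequality up to an absolute constant.

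For part (3), the strategy is the standard integration-by-parts proof of Hardy. Writing $|t|^{-2\alpha}=\frac{d}{dt}\left(\frac{\operatorname{sgn}(t)}{1-2\alpha}|t|^{1-2\alpha}\right)$ for $0\leq\alpha<1/2$, integrate by parts in $\int_{\R}|f(t)|^{2}|t|^{-2\alpha}\,dt$; the boundary term at $0$ vanishes because $1-2\alpha>0$ and $f$ is bounded near $0$, and the boundary term at $\infty$ vanishes after a density/truncation argument. This yields $\int_{\R}|f(t)|^{2}|t|^{-2\alpha}\,dt\lesssim\int_{\R}|f(t)||f'(t)||t|^{1-2\alpha}\,dt$, and a Cauchy--Schwarz step of the form $|f(t)||f'(t)||t|^{1-2\alpha}=\big(|f||t|^{-\alpha}\big)\cdot\big(|f'||t|^{1-\alpha}\big)$ absorbs the $|f||t|^{-\alpha}$ factor into the left-hand side. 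For the range $1/2<\alpha\leq 1$ with $f(0)=0$, the same integration by parts works with $|t|^{-2\alpha}=\frac{d}{dt}\left(\frac{-\operatorname{sgn}(t)}{2\alpha-1}|t|^{1-2\alpha}\right)$; now the boundary term at $0$ requires $f(0)=0$ to offset the singularity of $|t|^{1-2\alpha}$, which is precisely the hypothesis. The mildly delicate step, and the one I expect to need the most care, is justifying the vanishing of the boundary terms at $0$ and $\infty$; this is handled by a standard approximation of $f$ by compactly supported smooth functions that respect the vanishing condition at the origin when needed.
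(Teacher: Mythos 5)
Your proofs of all three parts are correct and are exactly the classical textbook arguments; the paper does not prove Lemma~\ref{sobolev} at all but simply records these as ``standard inequalities,'' so your route is the intended one. The only point worth flagging is that, as stated over all of $\R$ with $0\le\alpha<\frac12$, Hardy's inequality implicitly requires $f$ to have some decay at infinity (take $f\equiv 1$: the right-hand side vanishes while the left does not); your truncation/density step is exactly where this enters and should be taken as implicitly assuming, e.g., that $f$ vanishes at $\pm\infty$ or that both integrals are finite, which is always the case in the paper's applications. Likewise, the absorption step in part (3) needs the left-hand integral to be finite a priori, which is again handled by the truncation argument you mention; once this is said, the division-through step is rigorous.
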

\section{Weighted $L^{1}$ Estimate}\label{L1}
In this section, we establish the following weighted $L^{1}$ estimate, which controls the \emph{low-frequency part} of $ \phi $ in \eqref{eqphi}.
\begin{lemma}\label{weight}
Under the assumptions in Theorem \ref{main}, for $ \alpha=\frac{19}{20} $ and any $ X\in\R^{+} $,  we have
\begin{equation*}
\int_{0}^{\infty}\frac{1}{u}|\phi(X,\psi)|\psi^{\alpha}d\psi \lesssim 1.
\end{equation*}

\end{lemma}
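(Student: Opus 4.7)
My plan is to mimic Nash's weighted $L^{1}$-conservation trick, adapted to the twisted subtraction $\phi$ in von Mises coordinates. Because $|\phi|$ is not $C^{2}$ and the integration domain is unbounded, regularize with $\mu_{\epsilon}(r):=\sqrt{r^{2}+\epsilon^{2}}-\epsilon$ (a $C^{2}$, convex approximation of $|r|$ with $\mu_{\epsilon}(0)=0$, $|\mu_{\epsilon}'|\le 1$, $\mu_{\epsilon}''\ge 0$, and $\mu_{\epsilon}\nearrow |\cdot|$ as $\epsilon\downarrow 0$), and a smooth spatial cutoff $\chi_{N}\in C_{c}^{\infty}([0,\infty))$ with $\chi_{N}\equiv 1$ on $[0,N]$, $\operatorname{supp}\chi_{N}\subset [0,2N]$, $|\chi_{N}^{(k)}|\lesssim N^{-k}$. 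Define
\[
I_{\epsilon,N}(X):=\int_{0}^{\infty}\chi_{N}(\psi)\,\psi^{\alpha}\frac{\mu_{\epsilon}(\phi(X,\psi))}{u(X,\psi)}\,d\psi,
\]
and aim to show $\sup_{X\ge 0}I_{\epsilon,N}(X)\lesssim 1$ uniformly in $\epsilon,N$; monotone convergence and Fatou's lemma then yield Lemma \ref{weight}.

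\textbf{Initial data.} Via the change of variable $d\psi=u_{0}(y)\,dy$ at $X=0$, Lemmas \ref{Blasius} and \ref{Lm3} give $\psi\sim y^{2}$ near $0$ and $\psi\sim y$ at infinity, while
\[
\phi(0,\psi(y))=(u_{0}+\bar u_{0})(y)\,(u_{0}-\bar u_{0})(y)+\bigl(\bar u_{0}^{2}(y)-\bar u_{0}^{2}(\tilde y)\bigr),\qquad \tilde y:=y(0,\psi(y);\bar u).
\]
The first summand contributes $\lesssim \int(1+y)|u_{0}-\bar u_{0}|\,dy$. For the second, the mean-value theorem together with $|y-\tilde y|\lesssim \int_{0}^{y}|u_{0}-\bar u_{0}|$ and a Fubini swap (using the exponential decay of $\bar u_{0}'$ from Lemma \ref{Blasius}) produces the same majorant. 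Hypothesis \eqref{decayassume} then gives $I_{\epsilon,N}(0)\lesssim 1$ uniformly in $\epsilon,N$.

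\textbf{Evolution.} Differentiate $I_{\epsilon,N}$ in $X$, substitute $\phi_{X}=u\phi_{\psi\psi}-A\phi$, and note that $A\ge 0$ (since $\bar u_{yy}\le 0$ by Lemma \ref{Blasius}) and $\mu_{\epsilon}'(\phi)\phi\ge 0$, so the zeroth-order contribution $-\int \chi_{N}\psi^{\alpha}A\mu_{\epsilon}'(\phi)\phi/u\,d\psi$ has a good sign and is discarded. For the second-order term, write $\mu_{\epsilon}'(\phi)\phi_{\psi\psi}=(\mu_{\epsilon}(\phi))_{\psi\psi}-\mu_{\epsilon}''(\phi)\phi_{\psi}^{2}$, drop the non-negative square, and integrate by parts twice: since $\mu_{\epsilon}(\phi(X,0))=0$ and $\chi_{N}$ has compact support, all boundary contributions vanish, leaving
\[
\int(\chi_{N}\psi^{\alpha})_{\psi\psi}\,\mu_{\epsilon}(\phi)\,d\psi=\alpha(\alpha-1)\int \chi_{N}\psi^{\alpha-2}\mu_{\epsilon}(\phi)\,d\psi+\mathcal E_{N}(X),
\]
where the principal term is $\le 0$ (since $0<\alpha<1$) and is discarded, and $\mathcal E_{N}(X)$ collects $\chi_{N}'$, $\chi_{N}''$ contributions on $[N,2N]$ of size $\lesssim N^{\alpha-1}\|\phi\|_{L^{\infty}}$ which vanish as $N\to\infty$ by Theorem \ref{iyer}. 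The remaining drift term is handled via the identity $u_{X}=u_{yy}/u$ (derived by converting $u_{x}|_{y}$ to $u_{X}|_{\psi}$ in Prandtl's equation, using $y_{X}|_{\psi}=v/u$), hence $u_{X}/u^{2}=u_{yy}/u^{3}$. Combining the Blasius bound $|\bar u_{yy}|\lesssim \bar u^{2}/(X+1)$ (Lemma \ref{Blasius}), the comparability $u\sim\bar u$ (Lemma \ref{comparison}), and the $L^{\infty}$ decay of $\phi$, $\phi_{\psi}$ from Theorem \ref{iyer}, one bounds
\[
\Bigl|\int \chi_{N}\psi^{\alpha}\frac{\mu_{\epsilon}(\phi)u_{X}}{u^{2}}\,d\psi\Bigr|\le f_{1}(X)\,I_{\epsilon,N}(X)+f_{2}(X),
\]
for some $f_{1},f_{2}\in L^{1}_{X}(\R^{+})$. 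Grönwall's inequality together with the initial bound then closes $I_{\epsilon,N}(X)\lesssim 1$ uniformly; taking $\epsilon\downarrow 0$ and $N\to\infty$ completes the proof.

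\textbf{Main obstacle.} The decisive difficulty is the drift term. Near the boundary, Lemma \ref{Lm3} gives $1/u^{2}\sim (X+1)^{1/2}/\psi$, so naive $L^{\infty}$ bounds on $\phi$ produce a divergent $\psi$-integral and, worse, a factor in $X$ that fails to be integrable. One must exploit the parabolic degeneracy $|\bar u_{yy}|\lesssim \bar u^{2}/(X+1)$ to cancel the singular $1/u$-factor and gain an extra $(X+1)^{-1}$. A secondary but non-routine issue is the initial bound: the twisted subtraction $\phi(0,\psi)$ compares $u_{0}$ and $\bar u_{0}$ at different $y$-levels, so the physical-coordinate hypothesis \eqref{decayassume} must be transferred to the von Mises side through the comparison arguments underlying Lemma \ref{comparison}.
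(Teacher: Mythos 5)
Your overall strategy — regularize $|\phi|$, truncate with $\chi_N$, differentiate, and close by Grönwall — is the same Nash-type scheme the paper uses, and your treatment of the initial data, the parabolic second-order term (the $\mu_\epsilon''\ge 0$ trick, the $\alpha(\alpha-1)\le 0$ sign, and the $\chi_N'$, $\chi_N''$ error tails) is sound in outline. But there is a genuine gap in how you handle the two zeroth-order terms.

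You discard the $A$-contribution $-\int\chi_N\psi^\alpha A\,\mu_\epsilon'(\phi)\phi/u\,d\psi$ on sign grounds, and then try to bound the drift term $-\int\chi_N\psi^\alpha\mu_\epsilon(\phi)\,u_X/u^2\,d\psi$ by itself, claiming an estimate $\le f_1(X)I_{\epsilon,N}(X)+f_2(X)$ with $f_1,f_2\in L^1_X$. This cannot be true. Using $u_X=u_{yy}/u$, the comparability $u\sim\bar u$, and the Blasius bound $|\bar u_{yy}|\lesssim \bar u^{2}/(X+1)$, the drift coefficient satisfies $|u_X|/u\sim 1/(X+1)$, which is \emph{not} integrable on $\R^+$; the natural bound is then precisely $\frac{1}{X+1}I_{\epsilon,N}(X)$, and Grönwall yields only polynomial growth $I_{\epsilon,N}(X)\lesssim (X+1)^{C}$, not the uniform bound you need. (Trying instead to bound $\mu_\epsilon(\phi)$ by $\|\phi\|_{L^\infty}$ fails even worse: $\int_0^{2N}\psi^\alpha/u\,d\psi$ grows like $N^{\alpha+1}$.) The paper never separates these two terms. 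It forms the sum $\frac{A}{u}+\frac{u_X}{u^2}=\frac{1}{u^3}(u^2A+uu_X)$ and uses the exact cancellation $-\bar u_{yy}+\bar u\bar u_X=0$ (because $\bar u$ solves Prandtl), so that only the remainder $\mathcal R=\mathcal R_1+\mathcal R_2+\mathcal R_3$ survives, and $\mathcal R$ is quadratic in the perturbation $(\phi,\rho)$; this extra power of the small quantity is what makes the resulting factor $X$-integrable. Note that this cancellation is compatible with your regularizer: $\mu_\epsilon'(\phi)\phi-\mu_\epsilon(\phi)=\epsilon\bigl(1-\epsilon/\sqrt{\phi^2+\epsilon^2}\bigr)=O(\epsilon)$, so keeping the $A$-term and combining it with the $u_X$-term is possible, at the cost of an $O(\epsilon)$ error analogous to the paper's $K_2$. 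As written, however, your proof does not close.
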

\begin{proof}First, for large $ N>1 $, we introduce the smooth cutoff function $ \chi_{N}(\psi) $ defined as
\begin{equation*}
\chi(\psi)=\chi_{N}(\psi)=
\begin{cases} 1, & 0\leq\psi\leq N,\\
0, & \psi\geq 2N,
\end{cases}
\quad |\chi'(\psi)|\lesssim N^{-1},\ |\chi''(\psi)|\lesssim N^{-2}.
\end{equation*}
For $ \epsilon>0 $ $ N>1 $ and $ \alpha\in(0,1) $, we define
\begin{equation*}
I(X)=I_{\epsilon,N}(X)=\int_{0}^{\infty}\frac{1}{u}\psi^{\alpha}\left(\phi^{2}(X,\psi)+\epsilon^{2}\right)^{\frac{1}{2}}\chi(\psi)d\psi,
\end{equation*}
and we choose $\epsilon N^{2} \leq 1$ to ensure $ I(X)<\infty $ for all $ X\in\R^{+} $.

We first explain the idea briefly. If $ I(X) $ satisfies the following differential inequality(for $\delta=\frac{1}{100}$):
\begin{equation}\label{diff}
\begin{aligned}
I'(X) \lesssim& \epsilon^{\alpha} + N^{\alpha-3/2} +\epsilon\left(N^{\frac{1}{2}+\alpha}(X+1)^{\frac{1}{2}}+(X+1)^{\frac{3}{4}+\frac{\alpha}{2}}\right)\left(1+\|\phi_{X\psi}\|_{L^{2}_{\psi}}\right)\\
&+(X+1)^{-(\frac{3}{2}-\frac{\alpha}{2}-2\delta)}+(X+1)^{(\frac{1}{4}+\frac{\alpha}{2}+\delta)}\|\phi_{X\psi}\|_{L^{2}_{\psi}}\\
&+\left[(X+1)^{-(\frac{3}{2}-\delta)}+(X+1)^{-(\frac{5}{8}-\frac{\alpha}{2})}\|\phi_{X\psi}\|_{L^{2}_{\psi}}^{\frac{1}{2}}\right]I(X).
\end{aligned}
\end{equation}
By Theorem \ref{iyer}, the terms $ (X+1)^{(\frac{1}{4}+\frac{\alpha}{2}+\delta)}\|\phi_{X\psi}\|_{L^{2}_{\psi}} $ and  $(X+1)^{-(\frac{5}{8}-\frac{\alpha}{2})}\|\phi_{X\psi}\|_{L^{2}_{\psi}}^{\frac{1}{2}}$ are integrable over $\R^{+}$.

Applying Gronwall's inequality, we obtain that
\begin{equation*}
\begin{aligned}
I(X)\lesssim& I(0)+1+\left(\epsilon^{\alpha}+N^{\alpha-\frac{3}{2}}\right)X+\epsilon\left(N^{\frac{1}{2}+\alpha}(X+1)^{\frac{3}{2}}+(X+1)^{\frac{7}{4}+\frac{\alpha}{2}}\right)\\
\leq &\int_{0}^{\infty}\frac{1}{u_{0}(\psi)}\psi^{\alpha}|\phi_{0}(\psi)|\chi(\psi)d\psi+ \epsilon\int_{0}^{\infty}\frac{1}{u_{0}(\psi)}\psi^{\alpha}\chi(\psi)d\psi\\
&\quad +1+\left(\epsilon^{\alpha}+N^{\alpha-\frac{3}{2}}\right)X+\epsilon\left(N^{\frac{1}{2}+\alpha}(X+1)^{\frac{3}{2}}+(X+1)^{\frac{7}{4}+\frac{\alpha}{2}}\right)\\
\lesssim& \|\phi_{0}\|_{L^{\infty}_{\psi}}+\|\psi^{\alpha}\phi_{0}\|_{L^{1}_{\psi}} +1+\left(\epsilon^{\alpha}+N^{\alpha-\frac{3}{2}}\right)X+\epsilon\left(N^{\frac{1}{2}+\alpha}(X+1)^{\frac{3}{2}}+(X+1)^{\frac{7}{4}+\frac{\alpha}{2}}\right)\\
\lesssim&  1+\left(\epsilon^{\alpha}+N^{\alpha-\frac{3}{2}}\right)X+\epsilon\left(N^{\frac{1}{2}+\alpha}(X+1)^{\frac{3}{2}}+(X+1)^{\frac{7}{4}+\frac{\alpha}{2}}\right).
\end{aligned}
\end{equation*}
Then Lemma \ref{weight} is directly obtained since by Fatou's lemma,
\begin{equation}\label{limit}
\int_{0}^{\infty}\frac{1}{u}|\phi(X,\psi)|\psi^{\alpha}d\psi\leq \liminf_{\epsilon\to 0,N\to\infty}I(X)\lesssim 1.
\end{equation}

In the remaining part of the proof, we focus on obtaining the desired differential inequality \eqref{diff}  for $ I(X) $. First,  by applying equation \eqref{eqphi}, we get that
\begin{equation*}
\begin{aligned}
I'(X)&=\int_{0}^{\infty}\phi\phi_{X}(\phi^{2}+\epsilon^{2})^{-\frac{1}{2}}\frac{1}{u}\psi^{\alpha}\chi(\psi)d\psi-\int_{0}^{\infty}(\phi^{2}+\epsilon^{2})^{\frac{1}{2}}\frac{u_{X}}{u^{2}}\psi^{\alpha}\chi(\psi)d\psi\\
&=\int_{0}^{\infty}\phi\phi_{\psi\psi}(\phi^{2}+\epsilon^{2})^{-\frac{1}{2}}\psi^{\alpha}\chi(\psi)d\psi\\
&\quad-\bigg(\int_{0}^{\infty}A(X,\psi)\phi^{2}(\phi^{2}+\epsilon^{2})^{-\frac{1}{2}}\frac{1}{u}\psi^{\alpha}\chi(\psi)d\psi+\int_{0}^{\infty}(\phi^{2}+\epsilon^{2})^{\frac{1}{2}}\frac{u_{X}}{u^{2}}\psi^{\alpha}\chi(\psi)d\psi\bigg)\\
&=:J+K.
\end{aligned}
\end{equation*}
\textit{Estimate of $J$.}
Integrating by parts, we obtain that
\begin{equation*}
\begin{aligned}
J=&-\int_{0}^{\infty}\phi_{\psi}\Big(\phi(\phi^{2}+\epsilon^{2})^{-\frac{1}{2}}\Big)_{\psi}\psi^{\alpha}\chi(\psi)d\psi\\
&-\alpha \int_{0}^{\infty}\phi_{\psi}\phi(\phi^{2}+\epsilon^{2})^{-\frac{1}{2}}\psi^{\alpha-1}\chi(\psi)d\psi\\
&-\int_{0}^{\infty}\phi_{\psi}\phi(\phi^{2}+\epsilon^{2})^{-\frac{1}{2}}\psi^{\alpha}\chi'(\psi)d\psi\\
=&J_{1}+J_{2}+J_{3}.
\end{aligned}
\end{equation*}	
Direct computation reveals that $ J_{1} $ is non-positive, since
\begin{equation*}
\Big(\phi(\phi^{2}+\epsilon^{2})^{-\frac{1}{2}}\Big)_{\psi}=\frac{\epsilon^{2}\phi_{\psi}}{(\phi^{2}+\epsilon^{2})^{\frac{3}{2}}}.
\end{equation*}
For $J_{2}$, since $\psi^{\alpha-1}$ has a singularity at $\psi=0$, we decompose $J_{2}$ into two parts (with a parameter $\iota>0$ to be determined below) :
\begin{equation*}
\begin{aligned}
J_{2} &= -\alpha\int_{0}^{\iota}\phi_{\psi}\phi(\phi^{2}+\epsilon^{2})^{-\frac{1}{2}}\psi^{\alpha-1}\chi(\psi)d\psi\\
&\quad -\alpha\int_{\iota}^{\infty}\phi_{\psi}\phi(\phi^{2}+\epsilon^{2})^{-\frac{1}{2}}\psi^{\alpha-1}\chi(\psi)d\psi\\
&= J_{2}^{(1)} + J_{2}^{(2)}.
\end{aligned}
\end{equation*}
$J_{2}^{(1)}$ can be estimated directly:
\begin{equation*}
J_{2}^{(1)} \lesssim \|\phi_{\psi}\|_{L^{\infty}_{\psi}}\|\psi^{\alpha-1}\|_{L^{1}(0,\iota)} \lesssim \iota^{\alpha}.
\end{equation*}
For $J_{2}^{(2)}$, integrating by parts, we get that
\begin{equation*}
\begin{aligned}
J_{2}^{(2)} &\sim \int_{\iota}^{\infty}\psi^{\alpha-1}\chi(\psi)d\Big(-(\phi^{2}+\epsilon^{2})^{\frac{1}{2}}\Big)\\
&= \iota^{\alpha-1}\left(\phi^{2}(\iota)+\epsilon^{2}\right)^{\frac{1}{2}} + \int_{\iota}^{\infty}(\phi^{2}+\epsilon^{2})^{\frac{1}{2}}\Big[\chi'(\psi)\psi^{\alpha-1}+(\alpha-1)\chi(\psi)\psi^{\alpha-2}\Big]d\psi\\
&\lesssim \iota^{\alpha} + \iota^{\alpha-1}\epsilon.
\end{aligned}
\end{equation*}
We use the facts that $\chi$ is non-increasing and $\alpha<1$. By choosing $\iota=\epsilon$, we obtain that
\begin{equation*}
J_{2} \lesssim \epsilon^{\alpha}.
\end{equation*}

The term $J_{3}$ can be estimated similarly to $J_{2}$:
\begin{equation*}
\begin{aligned}
J_{3} &\sim \int_{0}^{\infty}(\psi+1)^{\alpha}\chi'(\psi)d\Big(-(\phi^{2}+\epsilon^{2})^{\frac{1}{2}}\Big)\\
&= \int_{0}^{\infty}(\phi^{2}+\epsilon^{2})^{\frac{1}{2}}\Big[\alpha\psi^{\alpha-1}\chi'(\psi)+\psi^{\alpha}\chi''(\psi)\Big]d\psi\\
&\leq \int_{0}^{\infty}(\phi^{2}+\epsilon^{2})^{\frac{1}{2}}\psi^{\alpha}|\chi''(\psi)|d\psi,
\end{aligned}
\end{equation*}
where we have used the fact that $\chi$ is non-increasing. Since $|\chi''(\psi)|\lesssim N^{-2}$, we have,
\begin{equation*}
\begin{aligned}
J_{3} &\lesssim \int_{0}^{\infty}(|\phi|+\epsilon)\psi^{\alpha}|\chi''(\psi)|d\psi\\
&\lesssim \epsilon N^{\alpha-1}+\|\phi\|_{L^{2}_{\psi}}\|\chi''(\psi)\psi^{\alpha}\|_{L^{2}_{\psi}}\\
&\lesssim \epsilon N^{\alpha-1}+N^{\alpha-\frac{3}{2}}.
\end{aligned}
\end{equation*}
Combining the above estimates, we obtain that
\begin{equation}\label{j}
J \lesssim \epsilon^{\alpha}+N^{\alpha-\frac{3}{2}}.
\end{equation}

\textit{Estimate of K.}
Recall that
\begin{equation*}
\begin{aligned}
K=&-\bigg(\int_{0}^{\infty}A(X,\psi)\phi^{2}(\phi^{2}+\epsilon^{2})^{-\frac{1}{2}}\frac{1}{u}\psi^{\alpha}\chi(\psi)d\psi+\int_{0}^{\infty}(\phi^{2}+\epsilon^{2})^{\frac{1}{2}}\frac{u_{X}}{u^{2}}\psi^{\alpha}\chi(\psi)d\psi\bigg)\\
=&-\int_{0}^{\infty}\phi^{2}(\phi^{2}+\epsilon^{2})^{-\frac{1}{2}}\psi^{\alpha}\chi(\psi)\Big(\frac{A}{u}+\frac{u_{X}}{u^{2}}\Big)d\psi\\
&+\int_{0}^{\infty}\Big[\phi^{2}(\phi^{2}+\epsilon^{2})^{-\frac{1}{2}}-(\phi^{2}+\epsilon^{2})^{\frac{1}{2}}\Big]\frac{u_{X}}{u^{2}}\psi^{\alpha}\chi(\psi)d\psi\\
=&:K_{1}+K_{2}.
\end{aligned}
\end{equation*}
First, we estimate the term $K_{2}$. Since
\begin{equation}\label{phi}
u-\bar{u}=\rho(X,\psi),\quad u^{2}-\bar{u}^{2}=\phi(X,\psi),
\end{equation}
we have $\phi=\rho(u+\bar{u})$. Taking derivatives with respect to $ X $, we get
\begin{equation*}
\phi_{X}=\rho_{X}(u+\bar{u})+\rho(2\bar{u}_{X}+ \rho_{X}).
\end{equation*}
Therefore,
\begin{equation}\label{rho}
\rho_{X}=\frac{\phi_{X}-2\rho \bar{u}_{X}}{2u}.
\end{equation}
By applying Cauchy-Schwarz, and Hardy's inequalities, and the fact that $ |\bar{u}_{X}|\lesssim u $, we have the following series of inequalities:
\begin{equation}\label{K2}
\begin{aligned}
&K_{2}\leq \epsilon \left(\int_{0}^{2N} \frac{|\bar{u}_{X}|}{u^{2}}\psi^{\alpha}+\int_{0}^{2N} \frac{|\rho_{X}|}{u^{2}}\psi^{\alpha}\right)\\
&\lesssim \epsilon\int_{0}^{2N}\left[\frac{1}{u}+\frac{|\phi_{X}|}{u^{3}}+\frac{|\phi|}{u^{3}}\right]\psi^{\alpha}d\psi\\
&\leq \epsilon \left[\int_{0}^{2N}\frac{1}{u}d\psi +\left(\int_{0}^{2N}\frac{|\phi_{X}|^{2}+|\phi|^{2}}{u^{4}}d\psi\right)^{\frac{1}{2}}\left(\int_{0}^{2N}\frac{\psi^{2\alpha}}{u^{2}}d\psi\right)^{\frac{1}{2}}\right]\\
&\lesssim \epsilon N+\epsilon\left(N^{\frac{1}{2}+\alpha}+(X+1)^{\frac{\alpha}{2}+\frac{1}{4}}\right)\left[\|\phi\|_{L^{2}_{\psi}}+\|\phi_{X}\|_{L^{2}_{\psi}}+(X+1)^{\frac{1}{2}}(\|\phi_{\psi}\|_{L^{2}_{\psi}}+\|\phi_{X\psi}\|_{L^{2}_{\psi}})\right]\\
&\lesssim \epsilon\left(N^{\frac{1}{2}+\alpha}(X+1)^{\frac{1}{2}}+(X+1)^{\frac{3}{4}+\frac{\alpha}{2}}\right)\left(1+\|\phi_{X\psi}\|_{L^{2}_{\psi}}\right).
\end{aligned}
\end{equation}
Finally, we estimate the term $K_{1}$. Following a similar approach as in \cite{MR4097332}, we decompose the term as
\begin{equation*}
\begin{aligned}
\frac{A}{u}+\frac{u_{X}}{u^{2}} &= \frac{1}{u^{3}}(u^{2}A+uu_{X}) \\
&= \frac{1}{u^{3}}\bigg(-2\bar{u}^{2}\frac{\bar{u}_{yy}}{\bar{u}(2\bar{u}+\rho)}+\phi A+(\bar{u}+ \rho)(\bar{u}_{X}+ \rho_{X})\bigg) \\
&= \frac{1}{u^{3}}\big(-\bar{u}_{yy}+\bar{u}\bar{u}_{X}+\mathcal{R}\big).
\end{aligned}
\end{equation*}
Here, the remainder term $\mathcal{R}$ can be expressed as
\begin{equation}\label{R}
\mathcal{R}= A\phi+\frac{\rho \bar{u}_{yy}}{2\bar{u}+\rho}+(\rho \bar{u}_{X}+u\rho_{X})=:\mathcal{R}_{1}+\mathcal{R}_{2}+\mathcal{R}_{3}.
\end{equation}
It is important to note that in the von Mises coordinate system,
\begin{equation*}
\bar{u}_{X}(X,\psi)=\bar{u}_{x}(X,y(X,\psi;\bar{u}))+\frac{\bar{v}}{\bar{u}}\bar{u}_{y}(X,y(X,\psi;\bar{u})).
\end{equation*}
Thus, in the von Mises coordinates, $-\bar{u}_{yy}+\bar{u}\bar{u}_{X}=0$ as $\bar{u}$ satisfies the Prandtl equation \eqref{prandtl}. Consequently,
\begin{equation*}
K_{1}=-\sum_{i=1}^{3}\int_{0}^{\infty}\phi^{2}(\phi^{2}+\epsilon^{2})^{-\frac{1}{2}}\psi^{\alpha}\chi(\psi)\frac{\mathcal{R}_{i}}{u^{3}}d\psi=K_{11}+K_{12}+K_{13}.
\end{equation*}
First, we estimate $K_{11}$. Indeed,
\begin{equation*}
\begin{aligned}
K_{11} &\lesssim (X+1)^{-1}\int_{0}^{\infty}\frac{1}{u^{3}}\phi^{2}\psi^{\alpha}d\psi \\
&\lesssim (X+1)^{-1}\bigg(\int_{0}^{\sqrt{X+1}}\frac{(X+1)^{\frac{3}{4}}}{\psi^{\frac{3}{2}}}\phi^{2}\psi^{\alpha}d\psi + \int_{\sqrt{X+1}}^{\infty}\phi^{2}\psi^{\alpha}\chi(\psi)d\psi\bigg) \\
&=:K_{11}^{(1)} + K_{11}^{(2)}.
\end{aligned}
\end{equation*}
For the term $K_{11}^{(1)}$, note that $\phi(x,0)=0$, thus
\begin{equation*}
\frac{\phi^{2}}{\psi^{\frac{3}{2}}} = \frac{\phi}{\psi}\frac{\phi}{\psi^{\frac{1}{2}}} \leq \|\phi_{\psi}\|_{L^{\infty}_{\psi}}\|\phi\|_{\dot{C}^{\frac{1}{2}}_{\psi}}.
\end{equation*}
By applying Sobolev embedding $\dot{H}^{1}\rightarrow \dot{C}^{1/2}$ in $\mathbb{R}^{1}$ and Theorem \ref{iyer}, we obtain
\begin{equation}\label{K111}
\begin{aligned}
K_{11}^{(1)} &\lesssim (X+1)^{\frac{1}{4}+\frac{\alpha}{2}}\|\phi_{\psi}\|_{L^{\infty}_{\psi}}\|\phi_{\psi}\|_{L^{2}_{\psi}} \\
&\lesssim (X+1)^{-(\frac{3}{2}-\frac{\alpha}{2}-2\delta)}.
\end{aligned}
\end{equation}
For $K_{11}^{(2)}$, utilizing Theorem \ref{iyer} once more, we get that
\begin{equation}\label{K112}
K_{11}^{(2)}\lesssim (X+1)^{-1}\|\phi\|_{L^{\infty}_{\psi}}I(X)\lesssim (X+1)^{-(\frac{3}{2}-\delta)}I(X).
\end{equation}
The term $K_{12}$ can be handled similarly to $K_{11}$ since
\begin{equation*}
|\mathcal{R}_{2}| \lesssim \frac{|\rho \bar{u}_{yy}|}{\bar{u}} \lesssim |A\phi| = |\mathcal{R}_{1}|.
\end{equation*}
Therefore,
\begin{equation}\label{K12}
K_{12} \lesssim (X+1)^{-(\frac{3}{2}-\frac{\alpha}{2}-2\delta)}+(X+1)^{-(\frac{3}{2}-\delta)}I(X).
\end{equation}

For the final term $K_{13}$, recall that
\begin{equation*}
\begin{aligned}
K_{13} &\leq \int_{0}^{\infty}|\phi|\psi^{\alpha}\frac{|\rho \bar{u}_{X}+u\rho_{X}|}{u^{3}}d\psi \\
&= \int_{0}^{\sqrt{X+1}}|\phi|\psi^{\alpha}\frac{|\rho \bar{u}_{X}+ u\rho_{X}|}{u^{3}}d\psi \\
&\quad + \int_{\sqrt{X+1}}^{\infty}|\phi|\psi^{\alpha}\frac{|\rho \bar{u}_{X}+ u\rho_{X}|}{u^{3}}d\psi \\
&= K_{13}^{(1)} + K_{13}^{(2)}.
\end{aligned}
\end{equation*}
Applying \eqref{phi} and \eqref{rho}, we have
\begin{equation}\label{K131}
\begin{aligned}
K_{13}^{(1)} &\lesssim \int_{0}^{\sqrt{X+1}}\left(\frac{|\phi\phi_{X}|\psi^{\alpha}}{u^{3}} + \frac{|\bar{u}_{X}|\phi^{2}\psi^{\alpha}}{u^{4}}\right)d\psi.
\end{aligned}
\end{equation}
To estimate the first term of \eqref{K131}, we apply H\"older's inequality, Hardy's inequality, and Theorem \ref{iyer}:
\begin{equation*}
\begin{aligned}
\int_{0}^{\sqrt{X+1}}\frac{|\phi\phi_{X}|\psi^{\alpha}}{u^{3}}d\psi &\lesssim (X+1)^{\frac{3}{4}}\int_{0}^{\sqrt{X+1}}\frac{|\phi\phi_{X}|\psi^{\alpha}}{\psi^{\frac{3}{2}}} \\
&\lesssim (X+1)^{\frac{3}{4}+\frac{\alpha}{2}}\left(\int_{0}^{\sqrt{X+1}}\frac{|\phi_{X}|^{2}}{\psi^{2}}\right)^{\frac{1}{2}}\left(\int_{0}^{\sqrt{X+1}}\phi^{2}\right)^{\frac{1}{4}}\left(\int_{0}^{\sqrt{X+1}}\frac{|\phi|^{2}}{\psi^{2}}\right)^{\frac{1}{4}} \\
&\lesssim (X+1)^{\frac{3}{4}+\frac{\alpha}{2}}\|\phi_{X\psi}\|_{L^{2}_{\psi}}\|\phi\|_{L^{2}_{\psi}}^{\frac{1}{2}}\|\phi_{\psi}\|_{L^{2}_{\psi}}^{\frac{1}{2}} \\
&\lesssim (X+1)^{(\frac{1}{4}+\frac{\alpha}{2}+\delta)}\|\phi_{X\psi}\|_{L^{2}_{\psi}}.
\end{aligned}
\end{equation*}
For the second term of \eqref{K131}, by applying Hardy's inequality, Theorem \ref{iyer}, and Lemma \ref{Blasius}, we get:
\begin{equation*}
\begin{aligned}
\int_{0}^{\sqrt{X+1}}\frac{|\bar{u}_{X}|\phi^{2} \psi^{\alpha}}{u^{4}}d\psi &\lesssim (X+1)^{\frac{\alpha}{2}-\frac{1}{4}}\int_{0}^{\sqrt{X+1}}\frac{\phi^{2}}{\psi^{\frac{3}{2}}}d\psi \\
&\lesssim (X+1)^{\frac{\alpha}{2}}\int_{0}^{\sqrt{X+1}}\frac{\phi^{2}}{\psi^{2}}d\psi \\
&\lesssim (X+1)^{\frac{\alpha}{2}}\|\phi_{\psi}\|_{L^{2}_{\psi}}^{2} \\
&\lesssim (X+1)^{-(\frac{3}{2}-\frac{\alpha}{2}-2\delta)}.
\end{aligned}
\end{equation*}
Therefore, 
\begin{equation}\label{k131}
 K_{13}^{(1)}\lesssim (X+1)^{(\frac{1}{4}+\frac{\alpha}{2}+\delta)}\|\phi_{X\psi}\|_{L^{2}_{\psi}}+(X+1)^{-(\frac{3}{2}-\frac{\alpha}{2}-2\delta)}.
\end{equation}
Now we estimate the far-field term $K_{13}^{(2)}$ by utilizing H\"older's inequality and Theorem \ref{iyer}:
\begin{equation}\label{K132}
\begin{aligned}
K_{13}^{(2)} &\lesssim \int_{\sqrt{X+1}}^{\infty}\left(|\phi\phi_{X}| \psi^{\alpha} + |\bar{u}_{X}|\phi^{2}\psi^{\alpha}\right)d\psi \\
&\lesssim  \|\phi_{X}\|_{L^{\infty}_{\psi}}I(X)+ \|\bar{u}_{X}\|_{L^{\infty}_{\psi}}\|\phi\|_{L^{\infty}_{\psi}}I(X)\\
&\lesssim (X+1)^{-(\frac{5}{8}-\frac{\delta}{2})}\|\phi_{X\psi}\|_{L^{2}_{\psi}}^{\frac{1}{2}}I(X)+(X+1)^{-(\frac{3}{2}-\delta)}I(X).
\end{aligned}
\end{equation}

Combining \eqref{K2}, \eqref{K111}, \eqref{K112}, \eqref{K12}, \eqref{k131}, and \eqref{K132}, we get:
\begin{equation*}
\begin{aligned}
K \lesssim &\epsilon\left(N^{\frac{1}{2}+\alpha}(X+1)^{\frac{1}{2}}+(X+1)^{\frac{3}{4}+\frac{\alpha}{2}}\right)\left(1+\|\phi_{X\psi}\|_{L^{2}_{\psi}}\right)+(X+1)^{-(\frac{3}{2}-\frac{\alpha}{2}-2\delta)}\\
&+(X+1)^{(\frac{1}{4}+\frac{\alpha}{2}+\delta)}\|\phi_{X\psi}\|_{L^{2}_{\psi}}+\left[(X+1)^{-(\frac{3}{2}-\delta)}+(X+1)^{-(\frac{5}{8}-\frac{\alpha}{2})}\|\phi_{X\psi}\|_{L^{2}_{\psi}}^{\frac{1}{2}}\right]I(X).
\end{aligned}
\end{equation*}

Referring back to \eqref{j}, we finally derive:
\begin{equation*}
\begin{aligned}
I'(X) \lesssim& \epsilon^{\alpha} + N^{\alpha-3/2} +\epsilon\left(N^{\frac{1}{2}+\alpha}(X+1)^{\frac{1}{2}}+(X+1)^{\frac{3}{4}+\frac{\alpha}{2}}\right)\left(1+\|\phi_{X\psi}\|_{L^{2}_{\psi}}\right)\\
&+(X+1)^{-(\frac{3}{2}-\frac{\alpha}{2}-2\delta)}+(X+1)^{(\frac{1}{4}+\frac{\alpha}{2}+\delta)}\|\phi_{X\psi}\|_{L^{2}_{\psi}}\\
&+\left[(X+1)^{-(\frac{3}{2}-\delta)}+(X+1)^{-(\frac{5}{8}-\frac{\alpha}{2})}\|\phi_{X\psi}\|_{L^{2}_{\psi}}^{\frac{1}{2}}\right]I(X).
\end{aligned}
\end{equation*}
which is the desired differential inequality \eqref{diff}. As explained at the beginning of the proof presentation, this concludes the proof of Lemma \ref{weight}.
\end{proof}

\section{Enhanced $ H^{1} $ Decay}\label{enhanced}
In this section, we investigate the decay rates of $\| \phi \|_{L^{2}_{\psi}}$ and $\| \phi_{\psi} \|_{L^{2}_{\psi}}$, which are demonstrated to be faster than those established in \cite{MR4097332}. The enhanced decay rates are derived through weighted energy estimates and iteration techniques.

We first establish the $L^{2}$ decay rate of $\phi$.
\begin{lemma}\label{L2decay} For $ \beta=\alpha+\frac{1}{2}=\frac{29}{20} $ and $ X\geq 0 $, we have
\begin{equation}\label{L2}
\int_{0}^{\infty}\frac{\phi^{2}}{u}d\psi \lesssim (X+1)^{-\beta}.
\end{equation}
\end{lemma}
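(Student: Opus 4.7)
The plan is to derive an energy inequality for $E(X):=\int_{0}^{\infty}\phi^{2}/u\,d\psi$, combine it with the weighted $L^{1}$ control of Lemma \ref{weight} via a Gagliardo--Nirenberg type interpolation, and solve the resulting differential inequality for $E$.

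\textbf{Step 1 (energy identity).} Multiplying \eqref{eqphi} by $\phi/u$ and integrating by parts in $\psi$ (boundary terms vanish since $\phi(X,0)=\phi(X,\infty)=0$) yields
\begin{equation*}
\frac{1}{2}\frac{d}{dX}E(X) + D(X) + \int_{0}^{\infty}\frac{A\phi^{2}}{u}\,d\psi = -\frac{1}{2}\int_{0}^{\infty}\frac{u_{X}\phi^{2}}{u^{2}}\,d\psi,
\end{equation*}
where $D(X):=\int_{0}^{\infty}\phi_{\psi}^{2}\,d\psi$. The $A$-term has favorable sign since $\bar{u}_{yy}\le 0$ (Lemma \ref{Blasius}). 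Decomposing $u_{X}=\bar{u}_{X}\bar{u}/u+\phi_{X}/(2u)$, the piece involving $\bar{u}_{X}\le 0$ is also favorable in von Mises, while the genuine error is the cubic term $\int\phi_{X}\phi^{2}/u^{3}$, to be controlled by Hardy's inequality near $\psi=0$ and the $H^{1}$ and $L^{\infty}$ bounds of Theorem \ref{iyer}. The net outcome is
\begin{equation*}
\frac{d}{dX}E(X) + c\,D(X) \leq \mathcal{E}(X),
\end{equation*}
with $\mathcal{E}(X)$ a cubic remainder that, after a bootstrap, will decay integrably faster than $(X+1)^{-\beta-1}$.

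\textbf{Step 2 (weighted interpolation).} Split $E=E_{1}+E_{2}$ at a cutoff $R\le\sqrt{X+1}$. For $E_{1}=\int_{0}^{R}\phi^{2}/u\,d\psi$, combine $|\phi(X,\psi)|^{2}\le\psi D$ (from $\phi(X,0)=0$ and Cauchy--Schwarz) with the lower bound $u\gtrsim\psi^{1/2}(X+1)^{-1/4}$ on $(0,\sqrt{X+1})$ (Lemma \ref{Lm3}) to obtain $E_{1}\lesssim (X+1)^{1/4}R^{3/2}D$. For $E_{2}=\int_{R}^{\infty}\phi^{2}/u\,d\psi$, the factor $\psi^{\alpha}/R^{\alpha}\ge 1$ unlocks Lemma \ref{weight}:
\begin{equation*}
E_{2}\;\leq\;\|\phi\|_{L^{\infty}_{\psi}}\int_{R}^{\infty}\frac{|\phi|}{u}\,d\psi\;\leq\;R^{-\alpha}\|\phi\|_{L^{\infty}_{\psi}}\int_{0}^{\infty}\frac{|\phi|\psi^{\alpha}}{u}\,d\psi\;\lesssim\;R^{-\alpha}\|\phi\|_{L^{\infty}_{\psi}}.
\end{equation*}
Agmon's inequality plus $\|\phi\|_{L^{2}_{\psi}}^{2}\lesssim E$ (valid because $u\le 2$ by Lemma \ref{comparison}) yields $\|\phi\|_{L^{\infty}_{\psi}}\lesssim E^{1/4}D^{1/4}$. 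Minimizing $E\lesssim (X+1)^{1/4}R^{3/2}D+E^{1/4}D^{1/4}R^{-\alpha}$ in $R$ (the optimizer turns out to lie on the scale $R\sim\sqrt{X+1}$, hence is admissible) gives the clean interpolation
\begin{equation*}
E \;\lesssim\; D^{(3+8\alpha)/(9+8\alpha)}\,(X+1)^{2\alpha/(9+8\alpha)},\qquad\text{equivalently}\qquad D\;\gtrsim\; E^{(9+8\alpha)/(3+8\alpha)}\,(X+1)^{-2\alpha/(3+8\alpha)}.
\end{equation*}

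\textbf{Step 3 (ODE and main obstacle).} Inserting the lower bound on $D$ into Step 1 produces the closed scalar inequality
\begin{equation*}
\frac{d}{dX}E(X)\;+\;c\,E(X)^{(9+8\alpha)/(3+8\alpha)}\,(X+1)^{-2\alpha/(3+8\alpha)}\;\leq\;\mathcal{E}(X).
\end{equation*}
The ansatz $E\sim(X+1)^{-\gamma}$ balances when $(3+8\alpha)(\gamma+1)=(9+8\alpha)\gamma+2\alpha$, which simplifies to $6\gamma=3+6\alpha$, i.e.\ $\gamma=\alpha+\tfrac{1}{2}=\beta$. A standard super-solution/barrier comparison, using $E(0)\lesssim\kappa^{2}$ from \eqref{decayassume}, closes the claimed bound $E(X)\lesssim(X+1)^{-\beta}$. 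The principal difficulty lies in Step 1: a naive application of Theorem \ref{iyer} only gives $\mathcal{E}(X)\lesssim(X+1)^{-3/2}$, which is not fast enough to sustain the ansatz with $\beta=29/20$. The resolution is an iteration (as outlined in subsection \ref{idea}): run the above ODE argument with the crude Theorem \ref{iyer} input to extract a first improvement beyond the Theorem \ref{iyer} rate; feed this improved pointwise bound for $\phi$ and $\phi_{X}$ back into the cubic remainder to reduce $\mathcal{E}(X)$; and iterate finitely many times until the exponent stabilizes at $\beta$.
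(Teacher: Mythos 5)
Your proposal follows the same route as the paper's proof of Lemma~\ref{L2decay}: test \eqref{eqphi} with $\phi/u$ to get the damped energy inequality, interpolate $E=\int\phi^{2}/u$ between the dissipation $D=\|\phi_{\psi}\|_{L^{2}_{\psi}}^{2}$ and the Lemma~\ref{weight} low-frequency quantity, and iterate the resulting ODE. Your packaging of the interpolation (the pointwise bound $|\phi(\psi)|^{2}\le\psi D$ plus Lemma~\ref{Lm3} in the near field, the far-field bound $E_{2}\lesssim R^{-\alpha}\|\phi\|_{L^{\infty}}$ from Lemma~\ref{weight}, Agmon, and a single scale $R$) is a tidy alternative to the paper's nested Hardy-plus-cutoff computation in \eqref{zero}--\eqref{four}, and it does land on the identical balance exponent $\gamma=\alpha+\tfrac{1}{2}=\beta$. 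Two small points. First, the admissibility caveat $R\le\sqrt{X+1}$ is not only an equilibrium coincidence: when the optimizer would exceed $\sqrt{X+1}$ you must cap $R$ there, and the resulting alternate lower bound $D\gtrsim E^{3}(X+1)^{2\alpha}$ plays the role of the paper's second branch in the $\min$ of \eqref{differential}; one can check both branches balance at the same rate $\beta$, so this is a gap only in exposition, not in substance. Second, the ``crude'' remainder from Theorem~\ref{iyer} alone is actually $\mathcal{E}(X)\lesssim(X+1)^{-(2-3\delta)}$ (not $(X+1)^{-3/2}$), obtained exactly as in the paper by estimating $\int|A\phi^{3}|/u^{3}$ through $(X+1)^{1/4}\|\phi_{\psi}\|_{L^{2}}^{3}$ near the boundary; this is still short of the needed $(X+1)^{-(\beta+1)}=(X+1)^{-49/20}$, which is why the two-stage iteration (to $(X+1)^{-11/10}$, then to $(X+1)^{-\beta}$) is required, as you correctly anticipate.
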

\begin{proof}
By testing Equation \eqref{eqphi} with $\frac{\phi}{u}$, we obtain
\begin{equation*}
\int_{0}^{\infty}\phi_{X}\phi\frac{1}{u}d\psi - \int_{0}^{\infty}\phi_{\psi\psi}\phi d\psi + \int_{0}^{\infty} A\phi^{2}\frac{1}{u}d\psi = 0.
\end{equation*}
Integrating by parts, we have
\begin{equation}\label{test}
\frac{\partial_{X}}{2}\int_{0}^{\infty}\phi^{2}\frac{1}{u} + \int_{0}^{\infty}|\phi_{\psi}|^{2} + \int_{0}^{\infty}\phi^{2}\frac{2u^{2}A+uu_{X}}{2u^{3}} = 0.
\end{equation}
Since $ A\geq 0 $, the third term can be handled similarly to the proof of Lemma \ref{weight}:
\begin{equation}\label{remainder}
\int_{0}^{\infty}\phi^{2}\frac{2u^{2}A+uu_{X}}{2u^{3}} \geq \int_{0}^{\infty}\phi^{2}\frac{\mathcal{R}_{1}+\mathcal{R}_{2}+\mathcal{R}_{3}}{u^{3}}d\psi =: \mathcal{N}(\phi),
\end{equation}
where $\mathcal{R}_{i}$ is defined in \eqref{R}. For the first term of \eqref{remainder}, using Theorem \ref{iyer} and Lemma \ref{sobolev}, we obtain
\begin{equation*}
\begin{aligned}
\int_{0}^{\infty}\phi^{2}\frac{|\mathcal{R}_{1}|}{u^{3}}d\psi &\leq \int_{0}^{\infty}|A \phi^{3}|\frac{1}{u^{3}}d\psi \\
&\lesssim (X+1)^{-1}\left((X+1)^{\frac{3}{4}}\int_{0}^{\sqrt{X+1}}\frac{|\phi|^{3}}{\psi^{\frac{3}{2}}}d\psi + \int_{\sqrt{X+1}}^{\infty}|\phi|^{3}d\psi\right) \\
&\lesssim (X+1)^{\frac{1}{4}}\|\phi\|_{\dot{C}^{1/2}_{\psi}}^{3} + (X+1)^{-1}\|\phi\|_{L^{\infty}_{\psi}}\|\phi\|_{L^{2}_{\psi}}^{2} \\
&\lesssim (X+1)^{\frac{1}{4}}\|\phi_{\psi}\|_{L^{2}_{\psi}}^{3} + (X+1)^{-1}\|\phi\|_{L^{\infty}_{\psi}}\|\phi\|_{L^{2}_{\psi}}^{2} \\
&\lesssim (X+1)^{-(2-3\delta)}.
\end{aligned}
\end{equation*}
The remaining terms of \eqref{remainder} can be handled similarly and we get $|\mathcal{N}(\phi)| \lesssim (X+1)^{-(2-3\delta)} $ Thus we obtain the following energy inequality:
\begin{equation}\label{energy}
\frac{\partial_{X}}{2}\left(\int_{0}^{\infty}\phi^{2}\frac{1}{u}\right) + \int_{0}^{\infty}|\phi_{\psi}|^{2} \lesssim (X+1)^{-(2-3\delta)}.
\end{equation}
Next, we need to control the (weighted) $L^{2}$ norm by interpolating between the norm $\|\phi_{\psi}\|_{L^{2}_{\psi}}$ (high-frequency) and the weighted $L^{1}$ norm $\|\psi^{\alpha}\phi\|_{L^{1}_{\psi}}$ (low-frequency) for $\alpha=\frac{19}{20}$. We have
\begin{equation}\label{zero}
\begin{aligned}
\int \phi^{2}\frac{1}{u} &\lesssim \int_{0}^{\sqrt{X+1}}\phi^{2}\frac{(X+1)^{\frac{1}{4}}}{\psi^{\frac{1}{2}}}d\psi + \int_{\sqrt{X+1}}^{\infty}\phi^{2}d\psi \\
&\lesssim \left\{ \int_{0}^{\infty}\phi^{2}\frac{(X+1)^{\frac{1}{4}}}{\psi^{\frac{1}{2}}}d\psi, \int_{0}^{\infty}\phi^{2}d\psi \right\}.
\end{aligned}
\end{equation}
By Hardy's inequality and Lemma \ref{weight}, we have
\begin{equation}\label{onee}
\begin{aligned}
\int_{0}^{\infty}\frac{\phi^{2}}{\psi^{\frac{1}{2}}}d\psi&=\int_{0}^{r}\frac{\phi^{2}}{\psi^{\frac{1}{2}}}d\psi+\int_{r}^{\infty}\frac{\phi^{2}}{\psi^{\frac{1}{2}}}d\psi\\
&\leq r^{\frac{3}{2}}\int_{0}^{r}\frac{\phi^{2}}{\psi^{2}}d\psi+r^{-(\alpha+\frac{1}{2})}\|\phi\|_{L^{\infty}_{\psi}}\|\psi^{\alpha} \phi\|_{L^{1}_{\psi}}\\
&\lesssim r^{\frac{3}{2}}\|\phi_{\psi}\|_{L^{2}_{\psi}}^{2}+r^{-(\alpha+\frac{1}{2})}\|\phi\|_{L^{2}_{\psi}}^{\frac{1}{2}}\|\phi_{\psi}\|_{L^{2}_{\psi}}^{\frac{1}{2}}\|\psi^{\alpha} \phi\|_{L^{1}_{\psi}}\\
&\lesssim r^{\frac{3}{2}}\|\phi_{\psi}\|_{L^{2}_{\psi}}^{2}+r^{-(\alpha+\frac{1}{2})}\|\phi\|_{L^{2}_{\psi}}^{\frac{1}{2}}\|\phi_{\psi}\|_{L^{2}_{\psi}}^{\frac{1}{2}}\\
&\lesssim \|\phi\|_{L^{2}_{\psi}}^{\frac{3}{8+4\alpha}}\|\phi_{\psi}\|_{L^{2}_{\psi}}^{\frac{7+8\alpha}{8+4\alpha}},
\end{aligned}
\end{equation}
where we choose $ r $ such that $ r^{\frac{3}{2}}\|\phi_{\psi}\|_{L^{2}_{\psi}}^{2}=r^{-(\alpha+\frac{1}{2})}\|\phi\|_{L^{2}_{\psi}}^{\frac{1}{2}}\|\phi_{\psi}\|_{L^{2}_{\psi}}^{\frac{1}{2}} $. Furthermore, by applying Hardy's inequality and Lemma \ref{weight} once more, we obtain that 
\begin{equation}\label{one}
\begin{aligned}
\|\phi\|_{L^{2}_{\psi}}^{2}&=\int_{0}^{r}\phi^{2}d\psi+\int_{r}^{\infty}\phi^{2}d\psi\\
&\leq r^{2}\int_{0}^{r}\frac{\phi^{2}}{\psi^{2}}d\psi+r^{-\alpha}\|\phi\|_{L^{\infty}_{\psi}}\|\psi^{\alpha} \phi\|_{L^{1}_{\psi}}\\
&\lesssim r^{2}\|\phi_{\psi}\|_{L^{2}_{\psi}}^{2}+r^{-\alpha}\|\phi\|_{L^{2}_{\psi}}^{\frac{1}{2}}\|\phi_{\psi}\|_{L^{2}_{\psi}}^{\frac{1}{2}}\|\psi^{\alpha} \phi\|_{L^{1}_{\psi}}\\
&\lesssim r^{2}\|\phi_{\psi}\|_{L^{2}_{\psi}}^{2}+r^{-\alpha}\|\phi\|_{L^{2}_{\psi}}^{\frac{1}{2}}\|\phi_{\psi}\|_{L^{2}_{\psi}}^{\frac{1}{2}}\\
&\lesssim \|\phi\|_{L^{2}_{\psi}}^{\frac{1}{2+\alpha}}\|\phi_{\psi}\|_{L^{2}_{\psi}}^{\frac{1+2\alpha}{2+\alpha}},
\end{aligned}
\end{equation}
where we choose $ r $ such that $ r^{2}\|\phi_{\psi}\|_{L^{2}_{\psi}}^{2} = r^{-\alpha}\|\phi\|_{L^{2}_{\psi}}^{\frac{1}{2}}\|\phi_{\psi}\|_{L^{2}_{\psi}}^{\frac{1}{2}}$. Hence,
\begin{equation}\label{two}
\|\phi\|_{L^{2}_{\psi}}\lesssim \|\phi_{\psi}\|_{L^{2}_{\psi}}^{\frac{1+2\alpha}{3+2\alpha}}.
\end{equation}
Substitute \eqref{two} into \eqref{onee}, we immediately derive
\begin{equation}\label{three}
\int_{0}^{\infty}\frac{\phi^{2}}{\psi^{\frac{1}{2}}}d\psi\lesssim \|\phi_{\psi}\|_{L^{2}_{\psi}}^{\frac{3+4\alpha}{3+2\alpha}}.
\end{equation}
Combining \eqref{zero},\eqref{two},\eqref{three}, we obtain
\begin{equation}\label{four}
\int_{0}^{\infty}\phi^{2}\frac{1}{u}d\psi\lesssim \max\left\{(X+1)^{\frac{1}{4}}\|\phi_{\psi}\|_{L^{2}_{\psi}}^{\frac{3+4\alpha}{3+2\alpha}}, \|\phi_{\psi}\|_{L^{2}_{\psi}}^{\frac{2+4\alpha}{3+2\alpha}}\right\}.
\end{equation}
Define
\begin{equation*}
\mathcal{A}(X):=\int_{0}^{\infty}|\phi(X,\psi)|^{2}\frac{1}{u}d\psi.
\end{equation*}
To achieve the decay rate \eqref{L2} for $\mathcal{A}(X)$, we require two iterations. We already know that $\mathcal{A}(X)\lesssim (X+1)^{-(\frac{1}{2}-2\delta)}$ from Theorem \ref{iyer}. To illustrate our iteration approach clearly, recalling \eqref{test}, \eqref{remainder}, we present the following chart:
{\small \begin{equation*}
\begin{aligned}
&{\boxed { \mathcal{A}(X) \lesssim (X+1)^{-(\frac{1}{2}-2\delta)}} }\longrightarrow {\boxed{ |\mathcal{N}(\phi)|\lesssim (X+1)^{-(2-3\delta)}}}\xrightarrow{\text{First Iteration}}{\boxed {\mathcal{A}(X) \lesssim (X+1)^{-\frac{11}{10}}}}\rightarrow\cdots\\
&{\boxed {\mathcal{A}(X) \lesssim (X+1)^{-\frac{11}{10}}} }\longrightarrow {\boxed{ |\mathcal{N}(\phi)|\lesssim (X+1)^{-\frac{51}{20}}}}\xrightarrow{\text{Second Iteration}}{\boxed {\mathcal{A}(X) \lesssim (X+1)^{-\beta}} }
\end{aligned}
\end{equation*}}

\textit{First Iteration.} First and foremost, we emphasize that in this step of iteration, no need for $ \alpha $ to be extremely close to one. Combining \eqref{energy} and \eqref{four}, there exist positive numbers $c_{0}$ and $c_{1}$ such that
\begin{equation}\label{differential}
\mathcal{A}'(X)+c_{0}\min \left\{(X+1)^{-\frac{3+2\alpha}{6+8\alpha}}\mathcal{A}^{\frac{6+4\alpha}{3+4\alpha}}(X),\mathcal{A}^{\frac{3+2\alpha}{1+2\alpha}}(X)\right\}\leq c_{1} (X+1)^{-(2-3\delta)}.
\end{equation}
We first demonstrate that the above differential inequality implies
\begin{equation}\label{first}
\mathcal{A}(X)\lesssim (X+1)^{-\frac{11}{10}}.
\end{equation}
Let
\begin{equation*}
\mathcal{J}=\left\{X>0: \mathcal{A}(X)>C(X+1)^{-\frac{11}{10}}\right\}.
\end{equation*}
If the open set $\mathcal{J}$ is non-empty, we can express $\mathcal{J}$ as
\begin{equation*}
\mathcal{J}=\bigcup_{k=1}^{\infty}(a_{k},b_{k}),\quad (a_{k},b_{k})\cap (a_{j},b_{j})=\emptyset \quad \text{if}\quad k\neq j.
\end{equation*}
Choose a sub-interval of  $\mathcal{J} $ denoted by $(a,b)$. It is evident that $\mathcal{A}(a)=C(a+1)^{-\frac{11}{10}}$, if $ C $ is chosen large. Define
\begin{equation*}
\mathcal{B}(X)=\mathcal{A}(X)-C(X+1)^{-\frac{11}{10}},
\end{equation*}
then $ B(a) = 0 $ and $B(X)>0$ for $ a<X<b $. Using the differential inequality \eqref{differential}, the following inequality holds for $a<X<b$:
\begin{equation*}
\begin{aligned}
\mathcal{B}'(X)&=\mathcal{A}'(X)+\frac{11}{10}C(X+1)^{-\frac{21}{10}}\\
&\leq c_{1}(X+1)^{-(2-3\delta)}+\frac{11}{10}C(X+1)^{-\frac{21}{10}}-c_{0}\min \left\{(X+1)^{-\frac{3+2\alpha}{6+8\alpha}}\mathcal{A}^{\frac{6+4\alpha}{3+4\alpha}}(X),\mathcal{A}^{\frac{3+2\alpha}{1+2\alpha}}(X)\right\}\\
&=c_{1}(X+1)^{-(2-3\delta)}+\frac{11}{10}C(X+1)^{-\frac{21}{10}}-c_{0}(X+1)^{-\frac{3+2\alpha}{6+8\alpha}}\mathcal{A}^{\frac{6+4\alpha}{3+4\alpha}}(X)\\
&\leq c_{1}(X+1)^{-(2-3\delta)}+\frac{11}{10}C(X+1)^{-\frac{21}{10}}-c_{0}C^{\frac{6+4\alpha}{3+4\alpha}}(X+1)^{-\left[\frac{3+2\alpha}{6+8\alpha}+\frac{11}{10}\frac{6+4\alpha}{3+4\alpha}\right]}.
\end{aligned}
\end{equation*}
Therefore, $\mathcal{B}'(a)<0$ whenever $C$ is large. Since $\mathcal{B}(a)=0$, it follows that $\mathcal{B}(a)<0$ in a neighborhood of $X=a$, contradicting $\mathcal{B}(X)>0$ for $a<X<b$. Thus, there exists $C>0$ such that $\mathcal{J}=\emptyset$, i.e., \eqref{first} holds.

\textit{Second Iteration.} To obtain a faster decay rate, we revisit the estimation of the third term in \eqref{test}. Let us recall
\begin{equation*}
\int_{0}^{\infty}\phi^{2}\frac{2u^{2}A+uu_{X}}{2u^{3}}\geq \int_{0}^{\infty}\phi^{2}\frac{\mathcal{R}_{1}+\mathcal{R}_{2}+\mathcal{R}_{3}}{u^{3}}d\psi=:\mathcal{N}(\phi).
\end{equation*}
For the first term, by applying Theorem \ref{iyer} and the improved $L^{2}$ decay rate \eqref{first}, we have
\begin{equation*}
\begin{aligned}
\int_{0}^{\infty}\phi^{2}\frac{|\mathcal{R}_{1}|}{u^{3}}d\psi&\leq \int_{0}^{\infty} |A\phi^{3}|\frac{1}{u^{3}}d\psi\\
&\lesssim (X+1)^{-1}\left\|\frac{\phi}{u^{2}}\right\|_{L^{\infty}_{\psi}}\int_{0}^{\infty}\phi^{2}\frac{1}{u}d\psi\\
&\lesssim (X+1)^{-1}(X+1)^{-(\frac{1}{2}-\delta)}(X+1)^{-\frac{11}{10}}\\
&\lesssim (X+1)^{-\frac{51}{20}}.
\end{aligned}
\end{equation*}
The terms involving $\mathcal{R}_{2}$ and $\mathcal{R}_{3}$ can be estimated similarly. Thus, we conclude that
\begin{equation*}
|\mathcal{N}(\phi)|\lesssim (X+1)^{-\frac{51}{20}}.
\end{equation*}
Therefore, similar to \eqref{differential}, there exist two positive constants $d_{0}$ and $d_{1}$ such that
\begin{equation}\label{differentiall}
\mathcal{A}'(X)+d_{0}\min \left\{(X+1)^{-\frac{3+2\alpha}{6+8\alpha}}\mathcal{A}^{\frac{6+4\alpha}{3+4\alpha}}(X),\mathcal{A}^{\frac{3+2\alpha}{1+2\alpha}}(X)\right\}\leq d_{1} (X+1)^{-\frac{51}{20}}.
\end{equation}
We assert that the above differential inequality \eqref{differentiall} implies that,
\begin{equation}\label{decayy}
\mathcal{A}(X)\lesssim_{\beta} (X+1)^{-\beta},\quad \beta=\alpha+\frac{1}{2}=\frac{29}{20}.
\end{equation}
We only need to repeat the procedure in the first iteration. Initially, fix $\beta>0$ and define
\begin{equation*}
\mathcal{I} = \left\{X>0 : \mathcal{A}(X)>D(X+1)^{-\beta}\right\},
\end{equation*}
for some $ D>1 $ to be chosen large.

By continuity, we can express
\begin{equation*}
\mathcal{I}=\bigcup_{k}(c_{k},d_{k}),\quad (c_{k},d_{k}) \cap (c_{j},d_{j}) = \emptyset\ \text{if}\ k \neq j.
\end{equation*}
Select one of the sub-intervals of $\mathcal{I}$, denoted by $(c,d)$, hence  $\mathcal{A}(c)=D(c+1)^{-\beta}$. Define
\begin{equation*}
\mathcal{F}(X)=\mathcal{A}(X)-D(X+1)^{-\beta}.
\end{equation*}
It is noted that for any $ X\in \mathcal{I} $:
\begin{equation*}
(X+1)^{-\frac{3+2\alpha}{6+8\alpha}}\mathcal{A}^{\frac{6+4\alpha}{3+4\alpha}}(X)\leq \mathcal{A}^{\frac{3+2\alpha}{1+2\alpha}}(X).
\end{equation*}
Hence, utilizing \eqref{differentiall}, when $ c<X<d $:
\begin{equation*}
\begin{aligned}
\mathcal{F}'(X)&=\mathcal{A}'(X)+\beta D(X+1)^{-(\beta+1)}\\
&\leq d_{1}(X+1)^{-\frac{51}{20}}+\beta D(X+1)^{-(\beta+1)}-d_{0}\min \left\{(X+1)^{-\frac{3+2\alpha}{6+8\alpha}}\mathcal{A}^{\frac{6+4\alpha}{3+4\alpha}}(X),\mathcal{A}^{\frac{3+2\alpha}{1+2\alpha}}(X)\right\}\\
&=d_{1}(X+1)^{-\frac{51}{20}}+\beta D(X+1)^{-(\beta+1)}-d_{0}(X+1)^{-\frac{3+2\alpha}{6+8\alpha}}\mathcal{A}^{\frac{6+4\alpha}{3+4\alpha}}(X)\\
&\leq d_{1}(X+1)^{-\frac{51}{20}}+\beta D(X+1)^{-(\beta+1)}-d_{0}D^{\frac{6+4\alpha}{3+4\alpha}}(X+1)^{-\left[\frac{3+2\alpha}{6+8\alpha}+\beta\frac{6+4\alpha}{3+4\alpha}\right]}.
\end{aligned}
\end{equation*}
Hence, when $ D>1 $ is large enough, we have $ \mathcal{F}'(c)<0 $, contradicting  $ \mathcal{F}(X)>0 $ when $ c<X<d $. Therefore, there exists $ D>0 $ such that $ \mathcal{I}=\emptyset $, i.e. \eqref{decayy} holds. Hence we complete the proof of Lemma \ref{L2decay}.
\end{proof}
Next we establish the $ L^{2} $ decay rate of $ \phi_{\psi} $.
\begin{lemma}\label{Middle}
For any $ X_{0}>0 $, we have
\begin{equation}\label{middle}
\int_{0}^{X_{0}}\int_{0}^{\infty}(X+1)^{\beta+\frac{1}{2}}|\phi_{\psi}|^{2}d\psi dX\lesssim (X_{0}+1)^{\frac{1}{2}}.
\end{equation}
\end{lemma}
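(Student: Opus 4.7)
The plan is to apply a weighted energy estimate with multiplier $(X+1)^{\beta+\frac{1}{2}}$ (note $\beta+\tfrac{1}{2}=\tfrac{39}{20}$) to the energy identity \eqref{test}. Multiplying that identity by $(X+1)^{\beta+\frac{1}{2}}$, integrating on $[0,X_{0}]$, and integrating by parts on the $\tfrac{1}{2}\partial_{X}\mathcal{A}$ contribution produces a boundary term $-\tfrac{1}{2}(X_{0}+1)^{\beta+\frac{1}{2}}\mathcal{A}(X_{0})$, which has the favorable sign (and in modulus is $\lesssim (X_{0}+1)^{\frac{1}{2}}$ by Lemma \ref{L2decay}), an initial term $\tfrac{1}{2}\mathcal{A}(0)$ controlled by the smallness of the data in \eqref{decayassume}, and a lower-order integral $\frac{\beta+1/2}{2}\int_{0}^{X_{0}}(X+1)^{\beta-\frac{1}{2}}\mathcal{A}(X)dX$ which, after substituting $\mathcal{A}(X)\lesssim (X+1)^{-\beta}$ from Lemma \ref{L2decay}, reduces to $\int_{0}^{X_{0}}(X+1)^{-\frac{1}{2}}dX\lesssim (X_{0}+1)^{\frac{1}{2}}$, exactly at the targeted rate.

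The hard part is the weighted nonlinear remainder $\int_{0}^{X_{0}}(X+1)^{\beta+\frac{1}{2}}|\mathcal{N}(\phi)|dX$ where $\mathcal{N}(\phi)=\int_{0}^{\infty}\phi^{2}(\mathcal{R}_{1}+\mathcal{R}_{2}+\mathcal{R}_{3})/u^{3}\,d\psi$ as in \eqref{remainder}. The crude bound $|\mathcal{N}(\phi)|\lesssim (X+1)^{-(2-3\delta)}$ used in Lemma \ref{L2decay} is now \emph{insufficient}: against the weight $(X+1)^{\beta+\frac{1}{2}}$ it would yield an integrated contribution of $(X_{0}+1)^{\frac{19}{20}+3\delta}$, far above the target $(X_{0}+1)^{\frac{1}{2}}$. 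The refinement is to redo each piece using the improved $L^{2}$ decay $\mathcal{A}(X)\lesssim (X+1)^{-\beta}$, while always retaining a factor of $\|\phi_{\psi}\|_{L^{2}_{\psi}}^{2}$ to be absorbed into the left-hand side. For the dominant $\mathcal{R}_{1}=A\phi$ contribution, following the split $\psi\leq\sqrt{X+1}$ / $\psi\geq\sqrt{X+1}$ as in the proof of Lemma \ref{L2decay}, Lemma \ref{Lm3} together with the pointwise Sobolev embedding $\dot{H}^{1}\hookrightarrow\dot{C}^{1/2}$ reduces the near-boundary part to $(X+1)^{\frac{1}{4}}\|\phi_{\psi}\|_{L^{2}_{\psi}}^{3}$. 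Substituting Iyer's bound $\|\phi_{\psi}\|_{L^{2}_{\psi}}^{2}\lesssim (X+1)^{-(3/2-2\delta)}$ for one copy in the cube and applying Cauchy--Schwarz on the remaining factor, the weighted integral is bounded by
$$\Big[\int_{0}^{X_{0}}(X+1)^{-\frac{11}{20}+4\delta}dX\Big]^{\frac{1}{2}}\Big[\int_{0}^{X_{0}}(X+1)^{\beta+\frac{1}{2}}\|\phi_{\psi}\|_{L^{2}_{\psi}}^{2}dX\Big]^{\frac{1}{2}}\lesssim (X_{0}+1)^{\frac{9}{40}+2\delta}E(X_{0})^{\frac{1}{2}},$$
where $E(X_{0})$ denotes the left-hand side of \eqref{middle}. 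Young's inequality then absorbs $\tfrac{1}{2}E(X_{0})$ and leaves a remainder of order $(X_{0}+1)^{\frac{9}{20}+4\delta}\lesssim (X_{0}+1)^{\frac{1}{2}}$ for $\delta=\tfrac{1}{100}$.

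The far-field $\mathcal{R}_{1}$ contribution, bounded by $(X+1)^{-1}\|\phi\|_{L^{\infty}_{\psi}}\|\phi\|_{L^{2}_{\psi}}^{2}$, acquires decay $(X+1)^{-\frac{59}{20}+\delta}$ once the improved $L^{2}$ bound from Lemma \ref{L2decay} replaces Iyer's, and so is directly integrable against the weight. The $\mathcal{R}_{2}$ piece is handled exactly as $\mathcal{R}_{1}$ since $|\mathcal{R}_{2}|\lesssim |\mathcal{R}_{1}|$. The $\mathcal{R}_{3}$ piece, which involves $\phi_{X}$ and $\phi_{X\psi}$, is estimated along the lines of the $K_{13}$ term in the proof of Lemma \ref{weight}, with the $L^{2}$ bound from Lemma \ref{L2decay} used in place of Iyer's wherever it gives a faster rate; each resulting term is either directly integrable in $X$ against $(X+1)^{\beta+\frac{1}{2}}$ (via Theorem \ref{iyer} bounds on $\|\phi_{X\psi}\|_{L^{2}_{\psi}}$) or can be absorbed into $E(X_{0})$ by the same Cauchy--Schwarz/Young step as above. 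Assembling all contributions one obtains $E(X_{0})\leq C(X_{0}+1)^{\frac{1}{2}}+\tfrac{1}{2}E(X_{0})$, and a standard absorption concludes \eqref{middle}.
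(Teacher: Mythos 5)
Your proof is correct, and it shares the core strategy with the paper: both multiply by the weight $(X+1)^{\beta+\frac{1}{2}}$, both convert the $\partial_{X}\mathcal{A}$ contribution into a negative boundary term plus a lower-order integral controlled by Lemma \ref{L2decay}, and both must deal with the fact that the lower-order coefficient term cannot be handled by the decay rates from Theorem \ref{iyer} alone. Where you diverge is in the treatment of that coefficient term. The paper derives the weighted identity \eqref{middleenergy} by testing \eqref{eqphi} directly with $\frac{\phi}{u}(X+1)^{\beta+\frac{1}{2}}$, keeps the nonnegative piece $\int A\phi^{2}/u$ separate (dropping it by sign rather than estimating it), and attacks $\int\phi^{2}u_{X}/u^{2}$ via the simple split $u_{X}=\bar{u}_{X}+\rho_{X}$, closing by Cauchy--Schwarz in $X$ against Iyer's $L^{2}_{X,\psi}$ bound on $\phi_{X\psi}(X+1)^{\frac{5}{4}-\delta}$; no absorption of $E(X_{0})$ is required. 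You instead revisit the $\mathcal{R}_{1},\mathcal{R}_{2},\mathcal{R}_{3}$ decomposition from the proof of Lemma \ref{L2decay}, refine the bound on $\mathcal{N}(\phi)$ by substituting the improved $L^{2}$ decay, and close the near-boundary contributions by retaining a factor $\|\phi_{\psi}\|_{L^{2}_{\psi}}^{2}$ and absorbing $\tfrac{1}{2}E(X_{0})$ via Cauchy--Schwarz and Young. This works — your exponent bookkeeping is right ($(X_{0}+1)^{\frac{49}{200}}E(X_{0})^{1/2}$, leaving $(X_{0}+1)^{\frac{49}{100}}\lesssim(X_{0}+1)^{\frac{1}{2}}$) — and the absorption is legitimate since $E(X_{0})<\infty$ a priori for each fixed $X_{0}$ by Theorem \ref{iyer}. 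The paper's route is a bit leaner because it never creates a term proportional to $E(X_{0})$, but both arguments are of comparable length and rigor; your version is a natural extension of the machinery already set up for Lemma \ref{L2decay}.
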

\begin{proof}Testing \eqref{eqphi} by $ \displaystyle \frac{\phi}{u}(X+1)^{\beta+\frac{1}{2}} $ and integrating by parts, we obtain:
\begin{equation}\label{middleenergy}
\begin{aligned}
&\frac{1}{2}\p_{X}\left[(X+1)^{\beta+\frac{1}{2}}\int_{0}^{\infty}|\phi|^{2}\frac{1}{u}d\psi\right]-\frac{1}{2}\left(\beta+\frac{1}{2}\right)(X+1)^{\beta-\frac{1}{2}}\int_{0}^{\infty}|\phi|^{2}\frac{1}{u}d\psi\\
&\quad\quad+\frac{1}{2}(X+1)^{\beta+\frac{1}{2}}\int_{0}^{\infty}\phi^{2}\frac{u_{X}}{u^{2}}d\psi+(X+1)^{\beta+\frac{1}{2}}\int_{0}^{\infty}|\phi_{\psi}|^{2}d\psi\\
&\quad\quad+(X+1)^{\beta+\frac{1}{2}}\int_{0}^{\infty}A\phi^{2}\frac{1}{u}d\psi=(X+1)^{\beta+\frac{1}{2}}\int_{0}^{\infty}\left(\phi_{X}-u\phi_{\psi\psi}+A\phi\right)\frac{\phi}{u}=0.
\end{aligned}
\end{equation}
Applying Lemma \ref{L2decay}, we have:
\begin{equation*}
(X+1)^{\beta+\frac{1}{2}}\int_{0}^{\infty}|\phi|^{2}\frac{1}{u}d\psi\lesssim \sqrt{X+1}.
\end{equation*}
Additionally, recalling that $ u=\bar{u}+\rho $, we have:
\begin{equation}\label{hhh}
\int_{0}^{\infty}\phi^{2}\frac{u_{X}}{u^{2}}d\psi \leq \int_{0}^{\infty}\phi^{2}\frac{|\bar{u}_{X}|}{u^{2}}d\psi +\int_{0}^{\infty}\phi^{2}\frac{|\rho_{X}|}{u^{2}}d\psi=\eqref{hhh}_{1}+\eqref{hhh}_{2}.
\end{equation}
Applying Lemma \ref{Blasius} and Lemma \ref{L2decay}, we have:
\begin{equation*}
\eqref{hhh}_{1}\leq \left\|\frac{\bar{u}_{X}}{u}\right\|_{L^{\infty}_{\psi}}\int_{0}^{\infty}\phi^{2}\frac{1}{u}d\psi \lesssim (X+1)^{-(\beta+1)}.
\end{equation*}

For $\eqref{hhh}_{2}$, by utilizing \eqref{rho}, Theorem \ref{iyer}, Lemma \ref{L2decay}, and Hardy's inequality, we get:
\begin{equation*}
\begin{aligned}
\eqref{hhh}_{2}&\leq \left(\int_{0}^{\infty}\frac{\phi^{4}}{u^{2}}d\psi\right)^{\frac{1}{2}}\left(\int_{0}^{\infty}\frac{|\rho_{X}|^{2}}{u^{2}}d\psi\right)^{\frac{1}{2}}\\
&\lesssim  \left\|\frac{\phi}{u^{\frac{1}{2}}}\right\|_{L^{\infty}_{\psi}}\left(\int_{0}^{\infty}\frac{\phi^{2}}{u}d\psi\right)^{\frac{1}{2}}\left(\int_{0}^{\infty}\left[\frac{|\phi_{X}|^{2}}{u^{4}}+\frac{|\phi|^{2}|\bar{u}_{X}|^{2}}{u^{6}}\right]d\psi\right)^{\frac{1}{2}}\\
&\lesssim (X+1)^{-(\frac{7}{4}+\frac{\beta}{2}-2\delta)}+(X+1)^{-(\frac{\beta}{2}-\delta)}\|\phi_{X\psi}\|_{L^{2}_{\psi}}.
\end{aligned}
\end{equation*}

Therefore, we conclude that:
\begin{equation*}
(X+1)^{\beta+\frac{1}{2}}\int_{0}^{\infty}\phi^{2}\frac{|u_{X}|}{u^{2}}d\psi\lesssim (X+1)^{-\frac{1}{2}}+(X+1)^{\frac{1}{2}+\frac{\beta}{2}+\delta}\|\phi_{X\psi}\|_{L^{2}_{\psi}}.
\end{equation*}

Hence, integrating \eqref{middleenergy} with respect to $ X $, and applying Cauchy-Schwarz inequality, we arrive at \eqref{middle}.
\end{proof}
\begin{lemma}\label{H1decay}For any $ X_{0}>0 $ :
\begin{equation}\label{H1}
(X_{0}+1)^{\beta+\frac{3}{2}}\int_{0}^{\infty}|\phi_{\psi}(X_{0},\psi)|^{2}d\psi+\int_{0}^{X_{0}}\int_{0}^{\infty}|\phi_{X}|^{2}\frac{1}{u}d\psi (X+1)^{\beta+\frac{3}{2}}dX\lesssim (X_{0}+1)^{\frac{1}{2}}.
\end{equation}
\end{lemma}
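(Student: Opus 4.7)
The plan is to run a weighted $H^{1}$ energy estimate by testing equation \eqref{eqphi} against the multiplier $(X+1)^{\beta+\frac{3}{2}}\phi_{X}/u$. The factor $1/u$ is chosen so that the degenerate term $-u\phi_{\psi\psi}$ in \eqref{eqphi} produces, after integration by parts in $\psi$, the clean quantity $\int \phi_{\psi}\phi_{X\psi}\,d\psi = \frac{1}{2}\frac{d}{dX}\|\phi_{\psi}\|_{L^{2}_{\psi}}^{2}$. The boundary contributions vanish because $\phi(X,0)=\phi(X,\infty)=0$ forces $\phi_{X}$ to vanish at both endpoints. This yields the pointwise (in $X$) identity
\begin{equation*}
\int_{0}^{\infty}\frac{\phi_{X}^{2}}{u}(X+1)^{\beta+\frac{3}{2}}d\psi + \frac{(X+1)^{\beta+\frac{3}{2}}}{2}\frac{d}{dX}\|\phi_{\psi}\|_{L^{2}_{\psi}}^{2} + \int_{0}^{\infty}\frac{A\phi\phi_{X}}{u}(X+1)^{\beta+\frac{3}{2}}d\psi = 0.
\end{equation*}

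Next I would pull the weight inside the $X$-derivative and integrate over $[0,X_{0}]$. The integration by parts in $X$ produces exactly the two terms on the left-hand side of \eqref{H1} plus three auxiliary contributions: the initial data $\frac{1}{2}\|\phi_{\psi}(0)\|_{L^{2}_{\psi}}^{2}$, a lower-order piece $\frac{\beta+3/2}{2}\int_{0}^{X_{0}}(X+1)^{\beta+\frac{1}{2}}\|\phi_{\psi}\|_{L^{2}_{\psi}}^{2}dX$, and the cross term from $A\phi\phi_{X}$. The initial contribution is controlled by \eqref{decayassume}, while the lower-order piece is exactly the quantity bounded by Lemma \ref{Middle} and so is $\lesssim (X_{0}+1)^{\frac{1}{2}}$. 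For the cross term, Cauchy--Schwarz splits it as
\begin{equation*}
\left|\int_{0}^{\infty}\frac{A\phi\phi_{X}}{u}(X+1)^{\beta+\frac{3}{2}}d\psi\right| \leq \frac{1}{2}\int_{0}^{\infty}\frac{\phi_{X}^{2}}{u}(X+1)^{\beta+\frac{3}{2}}d\psi + \frac{1}{2}\int_{0}^{\infty}\frac{A^{2}\phi^{2}}{u}(X+1)^{\beta+\frac{3}{2}}d\psi,
\end{equation*}
and the first piece is absorbed into the dissipation on the left. For the remaining piece, the pointwise bound $|A|\lesssim (X+1)^{-1}$ from Lemma \ref{Blasius}(4) (using $\bar{u}/(u+\bar{u})\leq 1$) combined with the $L^{2}$ decay $\int \phi^{2}/u\,d\psi \lesssim (X+1)^{-\beta}$ from Lemma \ref{L2decay} collapses the integrand to $(X+1)^{-\frac{1}{2}}$, which integrates over $[0,X_{0}]$ to $\lesssim (X_{0}+1)^{\frac{1}{2}}$. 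Assembling everything gives \eqref{H1}.

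The main obstacle, and the one step where the preceding estimates must be used at full strength, is the handling of the $A\phi\phi_{X}/u$ cross term: after the Cauchy--Schwarz split it is essential that the arithmetic $(X+1)^{-2}\cdot(X+1)^{-\beta}\cdot(X+1)^{\beta+\frac{3}{2}}=(X+1)^{-\frac{1}{2}}$ produces something still integrable on $[0,X_{0}]$ with the correct $(X_{0}+1)^{\frac{1}{2}}$ growth. This balance works precisely because $\beta$ is chosen through the low-frequency gain of Lemma \ref{weight}, and because Lemma \ref{Blasius}(4) yields the sharp $1/(X+1)$ decay of $A$; without either of these inputs the bound would close only with a growing loss. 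A minor technical point, justified by a standard approximation argument using the regularity from Theorem \ref{iyer}, is the legitimacy of differentiating $\|\phi_{\psi}(X,\cdot)\|_{L^{2}_{\psi}}^{2}$ in $X$ under the integral.
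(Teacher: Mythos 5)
Your proposal is correct and mirrors the paper's proof of Lemma \ref{H1decay} step for step: test \eqref{eqphi} against $\phi_X(X+1)^{\beta+\frac{3}{2}}/u$, integrate by parts in $\psi$ to produce $\frac{1}{2}\partial_X\bigl[(X+1)^{\beta+\frac{3}{2}}\|\phi_\psi\|^2_{L^2_\psi}\bigr]$ up to the lower-order term $(\beta+\frac{3}{2})(X+1)^{\beta+\frac{1}{2}}\|\phi_\psi\|^2_{L^2_\psi}$, split the $A\phi\phi_X/u$ term by Cauchy--Schwarz (absorbing half the dissipation), bound the remainder using $|A|\lesssim(X+1)^{-1}$ and Lemma \ref{L2decay} to get $(X+1)^{-\frac{1}{2}}$, and integrate in $X$ appealing to Lemma \ref{Middle} for the lower-order piece. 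The argument is sound as stated.
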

\begin{proof}
Testing \eqref{eqphi} by $ \displaystyle \frac{\phi_{X}}{u}(X+1)^{\beta+\frac{3}{2}} $, and integrating with respect to $\psi$, we obtain
\begin{equation}\label{H1energy}
(X+1)^{\beta+\frac{3}{2}}\int_{0}^{\infty}|\phi_{X}|^{2}\frac{1}{u}-(X+1)^{\beta+\frac{3}{2}}\int_{0}^{\infty}\phi_{\psi\psi}\phi_{X}+(X+1)^{\beta+\frac{3}{2}}\int_{0}^{\infty}A\phi\phi_{X}\frac{1}{u}=0.
\end{equation}
The second term can be computed as follows:
\begin{equation}\label{H1Energyy}
\begin{aligned}
&-(X+1)^{\beta+\frac{3}{2}}\int_{0}^{\infty}\phi_{\psi\psi}\phi_{X}d\psi =(X+1)^{\beta+\frac{3}{2}}\int_{0}^{\infty}\phi_{\psi}\phi_{X\psi}d\psi\\
&\quad\quad =\frac{1}{2}\p_{X}\left[(X+1)^{\beta+\frac{3}{2}}\int_{0}^{\infty}|\phi_{\psi}|^{2}d\psi\right]-\frac{1}{2}\left(\beta+\frac{3}{2}\right)(X+1)^{\beta+\frac{1}{2}}\int_{0}^{\infty}|\phi_{\psi}|^{2}d\psi.
\end{aligned}
\end{equation}
Substitute \eqref{H1Energyy} into \eqref{H1energy}:
\begin{equation}\label{upper}
\begin{aligned}
&(X+1)^{\beta+\frac{3}{2}}\int_{0}^{\infty}|\phi_{X}|^{2}\frac{1}{u}d\psi +\frac{1}{2}\p_{X}\left[(X+1)^{\beta+\frac{3}{2}}\int_{0}^{\infty}|\phi_{\psi}|^{2}d\psi\right]\\
&\quad\quad =\frac{1}{2}\left(\beta+\frac{3}{2}\right)(X+1)^{\beta+\frac{1}{2}}\int_{0}^{\infty}|\phi_{\psi}|^{2}d\psi-(X+1)^{\beta+\frac{3}{2}}\int_{0}^{\infty}A\phi\phi_{X}\frac{1}{u}d\psi\\
&\quad\quad \leq  \frac{1}{2}\left(\beta+\frac{3}{2}\right)(X+1)^{\beta+\frac{1}{2}}\int_{0}^{\infty}|\phi_{\psi}|^{2}d\psi+ (X+1)^{\beta+\frac{3}{2}}\int_{0}^{\infty}\frac{ |\phi_{X}|^{2}+|A\phi|^{2}}{2u}d\psi\\
\end{aligned}
\end{equation}
Applying Lemma \ref{L2decay}, and recalling the fact that $ |A(X,\psi)|\lesssim (X+1)^{-1} $.
\begin{equation}\label{H1energY}
\begin{aligned}
&\p_{X}\left[(X+1)^{\beta+\frac{3}{2}}\int_{0}^{\infty}|\phi_{\psi}|^{2}d\psi\right]+(X+1)^{\beta+\frac{3}{2}}\int_{0}^{\infty}|\phi_{X}|^{2}\frac{1}{u}d\psi\\
&\quad\quad \lesssim (X+1)^{\beta+\frac{1}{2}}\int_{0}^{\infty}|\phi_{\psi}|^{2}d\psi+(X+1)^{-\frac{1}{2}}.
\end{aligned}
\end{equation}
Integrating \eqref{H1energY} with respect to $ X $, and applying \eqref{middle},  we obtain \eqref{H1}.
\end{proof}

\section{Proof of Main Results}\label{proofmain}
In this section, we present our main result, Theorem \ref{main}, which asserts:
\begin{equation*}
\|u-\bar{u}\|_{L^{\infty}_{\psi}} \lesssim (X+1)^{-1},\ \text{and}\ \|u-\bar{u}\|_{L^{\infty}_{y}} \lesssim (x+1)^{-1}.
\end{equation*}
To delineate the proof, we break it down into three key parts:
\begin{equation*}
\begin{aligned}
&\text{Step 1: Sharp $ H^{1} $ decay rates by using enhanced $ H^{1} $ decay rates stated in Section \ref{enhanced}.}\\
&\text{Step 2: Sharp $ L^{\infty} $ convergence rate in von Mises coordinates by Sobolev's embedding.}\\
&\text{Step 3: Sharp $ L^{\infty} $ convergence rate in physical coordinates through twisted subtraction.}
\end{aligned}
\end{equation*}
\subsection{Sharp $ H^{1} $ Decay Rates of $\phi$}

In this subsection, to derive precise $H^{1}$ decay rates, we improve the estimates from Lemma \ref{weight} and Lemma \ref{L2decay} --- Lemma \ref{H1decay}.

Firstly, we present the refined version of Lemma \ref{weight}.
\begin{proposition}\label{weightt}
Under the assumptions of Theorem \ref{main}, for any $X \in \mathbb{R}^{+}$, we have
\begin{equation}
\int_{0}^{\infty}\frac{1}{u}|\phi(X,\psi)|\psi d\psi \lesssim 1.
\end{equation}
\end{proposition}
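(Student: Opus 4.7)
The plan is to mirror the scheme of Lemma~\ref{weight}, but now with the critical weight $\psi^{\alpha}$ at $\alpha=1$, while substituting the stronger decay rates of Lemmas~\ref{L2decay}, \ref{Middle}, and \ref{H1decay} for the weaker bounds of Theorem~\ref{iyer} that were used previously. Concretely, introduce the regularized/truncated functional
\begin{equation*}
I_{\epsilon,N}(X)=\int_{0}^{\infty}\frac{1}{u}\,\psi\,\bigl(\phi^{2}+\epsilon^{2}\bigr)^{1/2}\chi_{N}(\psi)\,d\psi,
\end{equation*}
with $\chi_{N}$ the cutoff from the proof of Lemma~\ref{weight} and $\epsilon N^{2}\leq 1$. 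Differentiate in $X$, plug in the equation $\phi_{X}=u\phi_{\psi\psi}-A\phi$, and split the result into the same $J$ (diffusive) and $K$ (zero-order plus coefficient) pieces as before. Then pass to the limit $\epsilon\downarrow 0$, $N\uparrow\infty$ using Fatou, after closing a differential inequality with integrable right-hand side.

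For the diffusive piece $J$, the integration-by-parts and cutoff arguments from Lemma~\ref{weight} transfer verbatim with $\alpha=1$, producing bounds of the form $\epsilon+N^{-1/2}\|\phi\|_{L^{2}_{\psi}}+\epsilon N^{0}$; these all vanish in the double limit. For $K$, I would reuse the splitting $K=K_{1}+K_{2}$, with $K_{1}$ further decomposed along the identity $\frac{A}{u}+\frac{u_{X}}{u^{2}}=u^{-3}(-\bar u_{yy}+\bar u\bar u_{X}+\mathcal R)$ so that the leading piece cancels (since $\bar u$ solves Prandtl) and only the cubic-in-$\phi$ remainder $\mathcal R=\mathcal R_{1}+\mathcal R_{2}+\mathcal R_{3}$ remains. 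The $\epsilon$-term $K_{2}$ is again controlled, using Hardy's inequality and the comparison Lemma~\ref{comparison}, by a factor $\epsilon$ times quantities that are polynomially bounded in $X,N$, hence negligible after the limits.

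The main obstacle is the near-boundary region $0<\psi<\sqrt{X+1}$, where raising $\alpha$ from $19/20$ to $1$ makes the Hardy-type bounds on terms like $\int \phi^{2}|\mathcal R_{i}|u^{-3}\psi^{\alpha}$ lose a factor $(X+1)^{1/40}$ that was previously absorbed; with the Theorem~\ref{iyer} rates alone these terms would be borderline non-integrable. The remedy is to replace $\|\phi\|_{L^{2}_{\psi}}$ by the sharper rate $\|\phi\|_{L^{2}_{\psi}}^{2}\lesssim (X+1)^{-\beta}$ from Lemma~\ref{L2decay}, $\|\phi_{\psi}\|_{L^{2}_{\psi}}^{2}\lesssim (X+1)^{-(\beta+1)}$ from Lemma~\ref{H1decay}, and the time-integrated bound on $(X+1)^{(\beta+3/2)/2}\|\phi_{X}\|_{L^{2}_{\psi}}/\sqrt{u}$, interpolated via Agmon to refresh $\|\phi\|_{L^{\infty}_{\psi}}$ and $\|\phi_{\psi}\|_{L^{\infty}_{\psi}}$. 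Because $\beta=29/20>1$, each of the three near-field integrals $K_{11},K_{12},K_{13}$, as well as the far-field pieces, gets an exponent strictly less than $-1$ in $(X+1)$, making them $L^{1}(\mathbb{R}^{+})$ in $X$.

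Once the estimates are assembled, one obtains a differential inequality of the form
\begin{equation*}
I_{\epsilon,N}'(X)\lesssim o_{\epsilon\downarrow 0}(1)+o_{N\uparrow\infty}(1)+g(X)\bigl(1+\|\phi_{X\psi}\|_{L^{2}_{\psi}}\bigr)+h(X)\,I_{\epsilon,N}(X),
\end{equation*}
where $g,h\in L^{1}(\mathbb{R}^{+})$ (using $\|\phi_{X\psi}\|_{L^{2}_{X,\psi}}$ in a weighted sense from Theorem~\ref{iyer}). Gronwall's lemma then gives a bound on $I_{\epsilon,N}(X)$ uniform in $X$, $\epsilon$, $N$, provided $I_{\epsilon,N}(0)$ is controlled. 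For the initial data, the hypothesis $(1+y)(u_{0}-\bar u_{0})\in L^{1}_{y}$ translates, via the von Mises change of variables and Lemma~\ref{comparison}, into $\int_{0}^{\infty}u^{-1}|\phi_{0}|\psi\,d\psi\lesssim 1$, which bounds $I_{\epsilon,N}(0)$ up to a term that disappears as $\epsilon\downarrow 0$. Finally, applying Fatou's lemma as $\epsilon\downarrow 0$ and $N\uparrow\infty$ yields the claimed bound, sharpening the exponent of $\psi$ in Lemma~\ref{weight} from $19/20$ to $1$.
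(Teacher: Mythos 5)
Your proposal is correct and follows the same route the paper takes: rerun the regularized, truncated weighted-$L^1$ machinery from Lemma~\ref{weight} at the endpoint $\alpha=1$, and close the resulting Gronwall inequality by substituting the sharper decay rates of Lemmas~\ref{L2decay}--\ref{H1decay} for the Theorem~\ref{iyer} rates in precisely the Hardy-type terms (\eqref{K111}, \eqref{K131}) that would otherwise become borderline non-integrable. Your identification of where the extra decay is needed and why $\beta=\tfrac{29}{20}>1$ makes those terms integrable matches the paper's (much terser) argument.
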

\begin{proof}
The proof is identical to that of Lemma \ref{weight}. The only difference is that we can apply Lemma \ref{L2decay} and Lemma \ref{H1decay} to refine the estimates of \eqref{K111} and \eqref{K131}.
\end{proof}

Next, we establish the sharp $ L^{2} $ decay.
\begin{proposition}\label{l2decay}The following estimate holds:
\begin{equation}
\int_{0}^{\infty}\frac{\phi^{2}}{u}d\psi \lesssim (X+1)^{-\frac{3}{2}}.
\end{equation}
\end{proposition}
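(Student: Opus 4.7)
The plan is to repeat the weighted $L^{2}$ energy estimate carried out in the proof of Lemma \ref{L2decay}, but now with the improved weighted $L^{1}$ bound provided by Proposition \ref{weightt} (i.e.\ the $\alpha=1$ version of Lemma \ref{weight}), and then to close an iteration argument of the same type as the one used there. The starting point is again the identity
\begin{equation*}
\frac{1}{2}\partial_{X}\int_{0}^{\infty}\frac{\phi^{2}}{u}\,d\psi+\int_{0}^{\infty}|\phi_{\psi}|^{2}\,d\psi
=-\int_{0}^{\infty}\phi^{2}\,\frac{2u^{2}A+uu_{X}}{2u^{3}}\,d\psi,
\end{equation*}
which yields $\mathcal{A}'(X)+\|\phi_{\psi}\|_{L^{2}_{\psi}}^{2}\lesssim|\mathcal{N}(\phi)|$ with $\mathcal{A}(X):=\int\phi^{2}/u$ and $\mathcal{N}(\phi)$ the cubic/quartic remainder defined in \eqref{remainder}.

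Next, I would rerun the Hardy-type interpolation \eqref{onee}--\eqref{four} with $\alpha=1$ in place of $\alpha=19/20$. Proposition \ref{weightt} controls $\|\psi\phi\|_{L^{1}_{\psi}}$, so the same splitting at a free parameter $r$ gives
\begin{equation*}
\|\phi\|_{L^{2}_{\psi}}\lesssim\|\phi_{\psi}\|_{L^{2}_{\psi}}^{3/5},\qquad
\int_{0}^{\infty}\frac{\phi^{2}}{\psi^{1/2}}\,d\psi\lesssim\|\phi_{\psi}\|_{L^{2}_{\psi}}^{7/5},
\end{equation*}
and combining with Lemma \ref{Lm3} to handle $1/u$ produces
\begin{equation*}
\mathcal{A}(X)\lesssim\max\Bigl\{(X+1)^{1/4}\|\phi_{\psi}\|_{L^{2}_{\psi}}^{7/5},\,\|\phi_{\psi}\|_{L^{2}_{\psi}}^{6/5}\Bigr\}.
\end{equation*}
Inverting this inequality and plugging into the energy identity gives the differential inequality
\begin{equation*}
\mathcal{A}'(X)+c_{0}\min\Bigl\{(X+1)^{-5/14}\mathcal{A}^{10/7}(X),\,\mathcal{A}^{5/3}(X)\Bigr\}\lesssim|\mathcal{N}(\phi)|.
\end{equation*}
Note that if $\mathcal{A}(X)\lesssim(X+1)^{-3/2}$ then the first branch dominates and its contribution decays like $(X+1)^{-5/2}$, which is the target rate the remainder $\mathcal{N}(\phi)$ must beat.

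The main work is then to show $|\mathcal{N}(\phi)|\lesssim(X+1)^{-5/2-\epsilon}$ for some $\epsilon>0$, by re-estimating each of the three pieces $\mathcal{R}_{1},\mathcal{R}_{2},\mathcal{R}_{3}$ in \eqref{R} using the already-improved decay of $\phi$ and $\phi_{\psi}$ from Lemma \ref{L2decay} ($\mathcal{A}\lesssim(X+1)^{-29/20}$) and Lemma \ref{H1decay} ($\|\phi_{\psi}\|_{L^{2}_{\psi}}^{2}\lesssim(X+1)^{-1-\beta}$ after distributing the integrated bound pointwise in $X$, together with Agmon's inequality for the $L^{\infty}$ bound on $\phi$). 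For instance, the leading $\mathcal{R}_{1}$ term is controlled by
\begin{equation*}
\int|A\phi^{3}|u^{-3}\,d\psi\lesssim(X+1)^{-1}\bigl\|\phi/u^{2}\bigr\|_{L^{\infty}_{\psi}}\mathcal{A}(X),
\end{equation*}
which combined with the improved $L^{\infty}$ estimate produces a strictly faster decay than $(X+1)^{-51/20}$ obtained in Lemma \ref{L2decay}; the $\mathcal{R}_{2}$ and $\mathcal{R}_{3}$ pieces are handled in the same spirit, using \eqref{rho} together with the weighted estimate on $\phi_{X\psi}$ from Lemma \ref{H1decay}. Once the improved bound on $|\mathcal{N}(\phi)|$ is in hand, the iteration/bootstrap argument in the proof of Lemma \ref{L2decay} (assume $\mathcal{A}\le C(X+1)^{-\gamma}$ on a maximal interval, differentiate $\mathcal{B}=\mathcal{A}-C(X+1)^{-\gamma}$ at its left endpoint, and derive a contradiction for $C$ large) delivers $\mathcal{A}(X)\lesssim(X+1)^{-3/2}$.

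The main obstacle is the bookkeeping in the remainder estimate: one may need to perform the bootstrap in two steps (first lifting $\mathcal{A}$ from the $(X+1)^{-29/20}$ rate to an intermediate rate strictly between $29/20$ and $3/2$, then re-feeding the new rate back into $|\mathcal{N}(\phi)|$) rather than a single shot, exactly as in the two-iteration argument of Lemma \ref{L2decay}. Provided the cubic structure \eqref{polynomial} of $\mathcal{N}(\phi)$ is exploited as described, each iteration produces a genuine gain and the scheme terminates at the sharp exponent $3/2$ dictated by the linearized spectral analysis in the heuristic discussion.
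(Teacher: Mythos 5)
Your proposal follows essentially the same route as the paper: rerun the interpolation \eqref{onee}--\eqref{four} with $\alpha=1$ (available now from Proposition \ref{weightt}), feed it into the energy identity \eqref{test}, and close by the same bootstrap argument as in Lemma \ref{L2decay}. One small remark: the "main work" you anticipate — re-estimating $\mathcal{N}(\phi)$ to beat $(X+1)^{-5/2}$ — is already done. The bound $|\mathcal{N}(\phi)|\lesssim(X+1)^{-51/20}$ was established in the second iteration of Lemma \ref{L2decay}, and since $51/20>5/2$, it already suffices; the paper simply reuses it and closes the $\alpha=1$ bootstrap in a single step, with no need for the intermediate-rate iteration you tentatively outline.
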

\begin{proof} In virtue of \eqref{four} with $ \alpha= 1$, we obtain
\begin{equation}\label{sharpinter}
\mathcal{A}(X)=\int_{0}^{\infty}\phi^{2}\frac{1}{u}d\psi\lesssim \max\left\{(X+1)^{\frac{1}{4}}\|\phi_{\psi}\|_{L^{2}_{\psi}}^{\frac{7}{5}}, \|\phi_{\psi}\|_{L^{2}_{\psi}}^{\frac{6}{5}}\right\}.
\end{equation}
By the energy balance \eqref{test}, we obtain the following differential inequality:
\begin{equation}\label{difff}
\mathcal{A}'(X)+\min \left\{(X+1)^{-\frac{5}{14}}\mathcal{A}^{\frac{10}{7}}(X),\mathcal{A}^{\frac{5}{3}}(X)\right\}\lesssim (X+1)^{-\frac{51}{20}}.
\end{equation}
which implies that for $ X > 0 $, 
\begin{equation}
\mathcal{A}(X)\lesssim (X+1)^{-\frac{3}{2}},
\end{equation}
by using the arguments analogous to those in the proof of \eqref{first} and \eqref{decayy}.
\end{proof}
We now estimate the decay rate of $ \| \phi_{\psi} \|_{L^{2}_{\psi}} $.

\begin{proposition}\label{Middlee}
For any $ X_{0} > 0 $, the following inequality holds:
\begin{equation}
\int_{0}^{X_{0}} \int_{0}^{\infty} (X+1)^{2} | \phi_{\psi} |^{2} d\psi dX \lesssim (X_{0}+1)^{\frac{1}{2}}.
\end{equation}
\end{proposition}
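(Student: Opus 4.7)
The plan is to repeat the weighted energy estimate that proved Lemma \ref{Middle}, but upgrade the weight from $(X+1)^{\beta+1/2}$ (with $\beta = 29/20$) to the sharp weight $(X+1)^2$, which is exactly what the improved $L^2$ decay $\mathcal{A}(X) \lesssim (X+1)^{-3/2}$ from Proposition \ref{l2decay} permits. Concretely, I would test equation \eqref{eqphi} against $\frac{\phi}{u}(X+1)^{2}$, integrate by parts in $\psi$, and arrive at an identity of the shape
\begin{equation*}
\frac{1}{2}\partial_{X}\!\left[(X+1)^{2}\!\int_{0}^{\infty}\!\frac{\phi^{2}}{u}d\psi\right] - (X+1)\!\int_{0}^{\infty}\!\frac{\phi^{2}}{u}d\psi + (X+1)^{2}\!\int_{0}^{\infty}\!|\phi_{\psi}|^{2}d\psi + \tfrac{1}{2}(X+1)^{2}\!\int_{0}^{\infty}\!\phi^{2}\frac{u_{X}}{u^{2}}d\psi + (X+1)^{2}\!\int_{0}^{\infty}\!\frac{A\phi^{2}}{u}d\psi = 0.
\end{equation*}
The last term is non-negative by Lemma \ref{Blasius} and can be discarded, so integrating in $X$ from $0$ to $X_{0}$ yields the desired weighted dissipation bound once the other four terms are controlled.

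The boundary contribution and the term produced by differentiating the weight are both handled by Proposition \ref{l2decay}: the boundary term is bounded by $(X_{0}+1)^{2}\cdot(X_{0}+1)^{-3/2} = (X_{0}+1)^{1/2}$, and $\int_{0}^{X_{0}}(X+1)\mathcal{A}(X)\,dX \lesssim \int_{0}^{X_{0}}(X+1)^{-1/2}\,dX \lesssim (X_{0}+1)^{1/2}$, both matching the target rate. The genuinely delicate piece is the $u_{X}$ term; splitting $u_{X}=\bar{u}_{X}+\rho_{X}$, the Blasius factor is harmless since $\|\bar{u}_{X}/u\|_{L^{\infty}_{\psi}}\lesssim (X+1)^{-1}$, giving a contribution $(X+1)^{2}\cdot(X+1)^{-1}\mathcal{A}(X)\lesssim (X+1)^{-1/2}$, once more integrable to $(X_{0}+1)^{1/2}$.

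The main obstacle, as already in Lemma \ref{Middle}, is the $\rho_{X}$ piece. Using \eqref{rho} to write $\rho_{X} = (\phi_{X}-2\rho\bar{u}_{X})/(2u)$, I would apply Cauchy--Schwarz followed by Hardy's inequality (Lemma \ref{sobolev}) to convert factors of $\phi/u$ into $\phi_{\psi}$, obtaining a bound of the form $C(X+1)^{\sigma_{1}}+C(X+1)^{\sigma_{2}}\|\phi_{X\psi}\|_{L^{2}_{\psi}}$ for suitable exponents. The sharp decay $\mathcal{A}(X)\lesssim (X+1)^{-3/2}$ (rather than $(X+1)^{-29/20}$) is what makes these exponents small enough that, after multiplication by the weight $(X+1)^{2}$ and integration in $X$, the polynomial piece is integrable and the piece carrying $\|\phi_{X\psi}\|_{L^{2}}$ is handled by Cauchy--Schwarz in $X$ against the bound in Theorem \ref{iyer}. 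Summing the four contributions gives exactly $(X_{0}+1)^{1/2}$, completing the proof.
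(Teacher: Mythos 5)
Your proposal is correct and follows essentially the same approach as the paper: testing against $\frac{\phi}{u}(X+1)^2$, splitting $u_X=\bar u_X+\rho_X$, using Proposition \ref{l2decay} and Blasius bounds for the boundary term, the weight-derivative term, and the $\bar u_X$ term, and then \eqref{rho}, Hardy's inequality, Cauchy--Schwarz, and Theorem \ref{iyer} for the delicate $\rho_X$ term. The only thing left implicit in your sketch is the explicit exponent in the $\rho_X$ bound (the paper obtains $(X+1)^{-5/2}+(X+1)^{-1}\|\phi_{X\psi}\|_{L^2_\psi}$, which, weighted by $(X+1)^2$ and integrated via Cauchy--Schwarz against Theorem \ref{iyer}, indeed closes), but the strategy is identical.
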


\begin{proof}
Testing \eqref{eqphi} with
\begin{equation*}
\phi \frac{1}{u} (X+1)^{2},
\end{equation*}
we obtain the following energy identity:
\begin{equation}\label{middlebalance}
\begin{aligned}
& \frac{\partial_{X}}{2} \left( \int_{0}^{\infty} \phi^{2} \frac{1}{u} d\psi (X+1)^{2} \right) - \int_{0}^{\infty} \phi^{2} \frac{1}{u} d\psi (X+1) + \frac{1}{2} \int_{0}^{\infty} \phi^{2} \frac{u_{X}}{u^{2}} d\psi (X+1)^{2} \\
& \quad\quad\quad\quad+ \int_{0}^{\infty} | \phi_{\psi} |^{2} d\psi (X+1)^{2} + \int_{0}^{\infty} A \phi^{2} \frac{1}{u} d\psi (X+1)^{2} = 0.
\end{aligned}
\end{equation}

We need to estimate
\begin{equation}\label{danger}
\int_{0}^{\infty} \phi^{2} \frac{u_{X}}{u^{2}} d\psi = \int_{0}^{\infty} \phi^{2} \frac{\bar{u}_{X}}{u^{2}} d\psi + \int_{0}^{\infty} \phi^{2} \frac{\rho_{X}}{u^{2}} d\psi.
\end{equation}

Using the properties of the Blasius profile and Proposition \ref{l2decay}, we have
\begin{equation*}
\begin{aligned}
| \eqref{danger}_{1} | & = \left|\int_{0}^{\infty} \phi^{2} \frac{\bar{u}_{X}}{u^{2}} d\psi\right| \lesssim (X+1)^{-1} \int_{0}^{\infty} \phi^{2} \frac{1}{u} d\psi \lesssim (X+1)^{-\frac{5}{2}}.
\end{aligned}
\end{equation*}

Applying Theorem \ref{iyer}, Lemma \ref{sobolev}, Lemma \ref{L2decay} and Lemma \ref{H1decay}, we have:
\begin{equation*}
\begin{aligned}
|\eqref{danger}_{2}|&\leq \left(\int_{0}^{\infty}\frac{\phi^{4}}{u^{2}}d\psi\right)^{\frac{1}{2}}\left(\int_{0}^{\infty}\frac{|\rho_{X}|^{2}}{u^{2}}d\psi\right)^{\frac{1}{2}}\\
&\lesssim  \left\|\frac{\phi}{u^{\frac{1}{2}}}\right\|_{L^{\infty}_{\psi}}\left(\int_{0}^{\infty}\frac{\phi^{2}}{u}d\psi\right)^{\frac{1}{2}}\left(\int_{0}^{\infty}\left[\frac{|\phi_{X}|^{2}}{u^{4}}+\frac{|\phi|^{2}|\bar{u}_{X}|^{2}}{u^{6}}\right]d\psi\right)^{\frac{1}{2}}\\
&\lesssim (X+1)^{-\frac{5}{2}}+(X+1)^{-1}\|\phi_{X\psi}\|_{L^{2}_{\psi}}.
\end{aligned}
\end{equation*}
Finally, recalling the energy balance \eqref{middlebalance} and employing Theorem \ref{iyer}, Proposition \ref{l2decay}, we obtain that
\begin{equation*}
\int_{0}^{X_{0}}\int_{0}^{\infty}(X+1)^{2}|\phi_{\psi}|^{2}d\psi dX\lesssim (X_{0}+1)^{\frac{1}{2}}.
\end{equation*}
Hence we finish the proof of Proposition \ref{Middlee}.
\end{proof}
\begin{proposition}\label{h1decay}
For any $ X_{0}>0 $,
\begin{equation}
(X_{0}+1)^{3}\int_{0}^{\infty}|\phi_{\psi}(X_{0},\psi)|^{2}d\psi+\int_{0}^{X_{0}}\int_{0}^{\infty}|\phi_{X}|^{2}\frac{1}{u}d\psi (X+1)^{3}dX\lesssim (X_{0}+1)^{\frac{1}{2}}.
\end{equation}
\end{proposition}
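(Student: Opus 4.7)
The strategy is a direct upgrade of the argument used for Lemma \ref{H1decay}: we repeat the $\phi_{X}$-testing of \eqref{eqphi}, but now with the heavier weight $(X+1)^{3}$ instead of $(X+1)^{\beta+\frac{3}{2}}$. The extra power of the weight will be paid for by the sharp decay already established in Proposition \ref{l2decay} and the sharp integrated estimate in Proposition \ref{Middlee}.

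More precisely, I will test \eqref{eqphi} with $\frac{\phi_{X}}{u}(X+1)^{3}$ and integrate in $\psi$, exactly as in \eqref{H1energy}. Moving the $\phi_{\psi\psi}$ term by parts as in \eqref{H1Energyy} gives
\begin{equation*}
\frac{1}{2}\partial_{X}\!\left[(X+1)^{3}\int_{0}^{\infty}|\phi_{\psi}|^{2}d\psi\right]+(X+1)^{3}\int_{0}^{\infty}\frac{|\phi_{X}|^{2}}{u}d\psi=\frac{3}{2}(X+1)^{2}\int_{0}^{\infty}|\phi_{\psi}|^{2}d\psi-(X+1)^{3}\int_{0}^{\infty}A\phi\phi_{X}\frac{1}{u}d\psi.
\end{equation*}
The last term is handled by Young's inequality to absorb half of $(X+1)^{3}\int|\phi_{X}|^{2}/u$ into the left side, leaving a contribution bounded by $(X+1)^{3}\int |A\phi|^{2}/u\,d\psi$. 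Using $|A|\lesssim(X+1)^{-1}$ together with the sharp $L^{2}$ decay $\int \phi^{2}/u\,d\psi\lesssim(X+1)^{-3/2}$ from Proposition \ref{l2decay}, this contribution is bounded by $(X+1)^{-1/2}$, which is integrable enough.

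Integrating in $X$ from $0$ to $X_{0}$, the $\frac{3}{2}(X+1)^{2}\int|\phi_{\psi}|^{2}d\psi$ term is absorbed precisely by the sharp middle-derivative estimate in Proposition \ref{Middlee}, contributing at most $(X_{0}+1)^{1/2}$. The $(X+1)^{-1/2}$ term integrates to $(X_{0}+1)^{1/2}$ as well, and the initial value $(0+1)^{3}\int|\phi_{\psi}(0,\psi)|^{2}d\psi$ is controlled by the assumption \eqref{decayassume}. Combining these gives the claimed bound on $(X_{0}+1)^{3}\|\phi_{\psi}(X_{0})\|_{L^{2}_{\psi}}^{2}+\int_{0}^{X_{0}}(X+1)^{3}\|\phi_{X}/\sqrt{u}\|_{L^{2}_{\psi}}^{2}dX$ by $(X_{0}+1)^{1/2}$.

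There is no genuine obstacle — the whole scheme in Section \ref{enhanced} was designed so that once Proposition \ref{l2decay} and Proposition \ref{Middlee} are in hand, the $H^{1}$ upgrade is automatic and mirrors Lemma \ref{H1decay} line by line with the improved exponents. The only point requiring mild care is checking that no cross term between the $A\phi$ factor and $\phi_{X}$ ruins the $(X+1)^{-1/2}$ bound; this is why one uses Young's inequality to split $A\phi\phi_{X}$ symmetrically and relies on the gain $|A|\lesssim(X+1)^{-1}$ from Lemma \ref{Blasius}.
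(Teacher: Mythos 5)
Your proposal is correct and follows exactly the route the paper takes: the paper's own proof simply says to test \eqref{eqphi} with $\phi_{X}\frac{1}{u}(X+1)^{3}$, apply Proposition \ref{l2decay} and Proposition \ref{Middlee}, and mimic Lemma \ref{H1decay}, and your plan supplies precisely those omitted computations (the integration by parts as in \eqref{H1Energyy}, Young's inequality on the $A\phi\phi_{X}$ term, the use of $|A|\lesssim(X+1)^{-1}$ with the sharp $L^{2}$ decay to get $(X+1)^{-1/2}$, and Proposition \ref{Middlee} after integrating in $X$). Nothing is missing.
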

\begin{proof}Testing \eqref{eqphi} by
\begin{equation*}
\phi_{X}\frac{1}{u}(X+1)^{3}.
\end{equation*}
Then we only need to apply Proposition \ref{l2decay}, Proposition \ref{Middlee}, and mimic the proof of Lemma \ref{H1decay}; hence, we omit the details.
\end{proof}
\subsection{Sharp $ L^{\infty} $ Convergence Rates between $u$ and $\bar{u}$(Theorem \ref{main})}\label{pfmain1}
In this subsection, we demonstrate our main result, Theorem \ref{main}, which directly follows from Theorem \ref{vonmisedecay} and Theorem \ref{physicaldecay}.
We start by determining the decay rate of $ |u-\bar{u}| $ in von Mises Coordinates.
\begin{theorem}\label{vonmisedecay} For any $ (X,\psi) \in [0,+\infty) \times [0,+\infty) $, we have
\begin{equation}\label{infty}
|u(X,\psi)-\bar{u}(X,\psi)|\lesssim
\begin{cases}
(X+1)^{-1},\quad \psi\geq\sqrt{X+1};\\
\psi^{\frac{1}{4}}(X+1)^{-\frac{9}{8}},\quad \psi\leq\sqrt{X+1}.
\end{cases}
\end{equation}
\end{theorem}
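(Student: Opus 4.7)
The plan is to reduce the pointwise estimate on $u-\bar u$ to one on $\phi=u^2-\bar u^2$ through the identity $u-\bar u=\phi/(u+\bar u)$, using Lemma \ref{Lm3} to bound $u+\bar u$ from below: $u+\bar u\sim 1$ for $\psi\ge\sqrt{X+1}$ and $u+\bar u\sim \psi^{1/2}(X+1)^{-1/4}$ for $\psi\le\sqrt{X+1}$. The two cases in the piecewise bound then correspond to the two regimes of $u+\bar u$.

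For $\psi\ge\sqrt{X+1}$, since $u+\bar u\gtrsim 1$, it suffices to show $|\phi(X,\psi)|\lesssim (X+1)^{-1}$. Using $\phi(X,\infty)=0$, Agmon's inequality (Lemma \ref{sobolev}(2)) yields
\begin{equation*}
|\phi(X,\psi)|^2\le 2\|\phi\|_{L^2([\sqrt{X+1},\infty))}\,\|\phi_\psi\|_{L^2}.
\end{equation*}
Since $u\sim 1$ on $[\sqrt{X+1},\infty)$, Proposition \ref{l2decay} yields $\|\phi\|_{L^2([\sqrt{X+1},\infty))}^2\le \int \phi^2/u\,d\psi\lesssim (X+1)^{-3/2}$, and Proposition \ref{h1decay} gives $\|\phi_\psi\|_{L^2}^2\lesssim (X+1)^{-5/2}$. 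Multiplying produces $|\phi|\lesssim (X+1)^{-1}$.

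For $\psi\le\sqrt{X+1}$, the claim $|u-\bar u|\lesssim \psi^{1/4}(X+1)^{-9/8}$ is implied by the stronger pointwise bound $|\phi(X,\psi)|\lesssim \psi(X+1)^{-3/2}$, since dividing by $u+\bar u\sim\psi^{1/2}(X+1)^{-1/4}$ gives $|u-\bar u|\lesssim \psi^{1/2}(X+1)^{-5/4}\le \psi^{1/4}(X+1)^{-9/8}$ on this range. The boundary condition $\phi(X,0)=0$ together with $|\phi(X,\psi)|\le \psi\|\phi_\psi\|_{L^\infty}$ and Agmon's inequality applied to $\phi_\psi$ gives
\begin{equation*}
|\phi(X,\psi)|\le \psi\|\phi_\psi\|_{L^\infty}\lesssim \psi\,\|\phi_\psi\|_{L^2}^{1/2}\|\phi_{\psi\psi}\|_{L^2}^{1/2}.
\end{equation*}
With $\|\phi_\psi\|_{L^2}\lesssim (X+1)^{-5/4}$ from Proposition \ref{h1decay}, everything reduces to a sharp $H^2$ estimate $\|\phi_{\psi\psi}\|_{L^2}\lesssim (X+1)^{-7/4}$. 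This can be derived from the PDE $\phi_{\psi\psi}=(\phi_X+A\phi)/u$ coming from \eqref{eqphi}, combined with Proposition \ref{h1decay}, Hardy's inequality (Lemma \ref{sobolev}(3)), and a weighted energy argument in which the vanishing of $\phi$ (and $\phi_X$) at $\psi=0$ absorbs the singularity of $1/u$.

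\textbf{Main obstacle.} The sharp $H^2$ control of $\phi$ near the degenerate boundary $\psi=0$. Since $1/u\sim (X+1)^{1/4}\psi^{-1/2}$ is singular at $\psi=0$, a direct substitution of the $L^2$ bounds for $\phi_X$ into $\int |\phi_X+A\phi|^2/u^2\,d\psi$ fails; one must exploit Hardy's inequality and the parallel vanishing of $\phi$ and $\phi_X$ at $\psi=0$ to control the weighted integral. Moreover, Proposition \ref{h1decay} controls $\phi_X$ only in an integrated-in-$X$ sense, so producing a pointwise-in-$X$ bound of strength $(X+1)^{-7/4}$ on $\|\phi_{\psi\psi}\|_{L^2}$ may well require an additional higher-order energy estimate together with the iteration scheme already used in Section \ref{enhanced}. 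Once this obstacle is overcome, the two Sobolev/Agmon arguments above close the proof.
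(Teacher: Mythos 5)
Your treatment of the outer region $\psi\ge\sqrt{X+1}$ is exactly the paper's argument: Agmon plus Propositions~\ref{l2decay} and \ref{h1decay}.

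In the inner region $\psi\le\sqrt{X+1}$, however, you have flagged but not closed a genuine gap, and it is precisely the gap the paper goes out of its way to avoid. Your route requires a pointwise-in-$X$ bound $\|\phi_{\psi\psi}\|_{L^2_\psi}\lesssim(X+1)^{-7/4}$, which you propose to extract from $\phi_{\psi\psi}=(\phi_X+A\phi)/u$. But the propositions available give only the integrated-in-$X$ control $\int_0^{X_0}\int|\phi_X|^2\tfrac{1}{u}(X+1)^3\,d\psi\,dX\lesssim(X_0+1)^{1/2}$ (Proposition~\ref{h1decay}) and the lossy $\|\phi_X\|_{L^2_\psi}\lesssim(X+1)^{-(5/4-\delta)}$ from Theorem~\ref{iyer}; there is no sharp pointwise $\|\phi_X\|_{L^2_\psi}$ or $\|\phi_{X\psi}\|_{L^2_\psi}$ estimate with which to absorb the singular weight $1/u^2\sim(X+1)^{1/2}/\psi$. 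Producing one would require differentiating \eqref{eqphi} again and running a fresh weighted energy scheme, which the paper's own remark after Theorem~\ref{l4decay} identifies as problematic in the low-regularity framework of Theorem~\ref{iyer}. So ``once this obstacle is overcome'' is doing all the work: the theorem is not yet proved by your argument.

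The paper's device is to stay at the level of first $\psi$-derivatives but raise the integrability: it proves the $L^4$ energy estimate $\|\phi_\psi\|_{L^4_\psi}\lesssim(X+1)^{-11/8}$ (Theorem~\ref{l4decay}) by testing \eqref{eqphi} against $\phi\phi_\psi^2 u^{-1}(X+1)^5$ and then $\phi_X\phi_\psi^2 u^{-1}(X+1)^6$, requiring only the $H^1$ control already in hand. Then $\dot W^{1,4}_\psi\hookrightarrow\dot C^{3/4}_\psi$ together with $\phi(X,0)=0$ gives $|\phi(X,\psi)|\lesssim\psi^{3/4}\|\phi_\psi\|_{L^4_\psi}$, and dividing by $\bar u\sim\psi^{1/2}(X+1)^{-1/4}$ produces exactly $\psi^{1/4}(X+1)^{-9/8}$. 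Note the paper needs only the H\"older exponent $3/4$ of vanishing at $\psi=0$, not the full linear vanishing $|\phi|\lesssim\psi\|\phi_\psi\|_{L^\infty}$ that your $H^2$ route would deliver; this is what lets them dispense with the second $\psi$-derivative altogether. If you want to salvage your approach, you would need to carry out the additional $H^2$ (or $\phi_X$) energy iteration; otherwise, the $L^4$ route is the one that closes.
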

\begin{proof}By the definition of $ \phi $,
\begin{equation*}
|u(X,\psi)-\bar{u}(X,\psi)|\leq\frac{|\phi(X,\psi)|}{\bar{u}(X,\psi)}.
\end{equation*}
If $ \psi\geq\sqrt{X+1} $, by applying Agmon's inequality(Lemma \ref{sobolev}), Proposition \ref{l2decay}, and Proposition \ref{h1decay}, we get:
\begin{equation*}
\begin{aligned}
|u(X,\psi)-\bar{u}(X,\psi)|&\lesssim |\phi(X,\psi)|\lesssim \|\phi\|_{L^{2}_{\psi}}^{\frac{1}{2}}\|\phi_{\psi}\|_{L^{2}_{\psi}}^{\frac{1}{2}}\\
&\lesssim (X+1)^{-\frac{1}{2}\times\frac{3}{4}}(X+1)^{-\frac{1}{2}\times \frac{5}{4}}\\
&\lesssim (X+1)^{-1}.
\end{aligned}
\end{equation*}
To obtain the desired pointwise estimate in the region $ \psi\leq\sqrt{X+1} $, the estimate of $ \|\phi_{\psi}\|_{L^{2}_{\psi}} $ is not sufficient, as this quantity only provides the estimate of $ \|\phi\|_{\dot{C}^{\frac{1}{2}}_{\psi}} $. Therefore, we require the following additional estimate, which can be considered as an \emph{$ L^{4} $ energy estimate}.
\begin{theorem}\label{l4decay}
The following estimate holds:
\begin{equation}
\int_{0}^{\infty}|\phi_{\psi}|^{4}d\psi \lesssim (X+1)^{-\frac{11}{2}}.
\end{equation}
\end{theorem}
Now we conclude the proof of Theorem \ref{vonmisedecay}. For $ \psi \leq \sqrt{X+1} $, utilizing Theorem \ref{l4decay} and Sobolev embedding $ \dot{W}^{1,4}_{\psi}\rightarrow \dot{C}^{\frac{3}{4}}_{\psi} $, we obtain that
\begin{equation*}
\begin{aligned}
|u(X,\psi)-\bar{u}(X,\psi)|&\lesssim (X+1)^{\frac{1}{4}}\frac{|\phi(X,\psi)|}{\psi^{\frac{1}{2}}}\\
&\leq (X+1)^{\frac{1}{4}}\psi^{\frac{1}{4}}\|\phi\|_{\dot{C}^{\frac{3}{4}}_{\psi}}\\
&\lesssim (X+1)^{\frac{1}{4}}\psi^{\frac{1}{4}}\|\phi_{\psi}\|_{L^{4}_{\psi}}\\
&\lesssim \psi^{\frac{1}{4}}(X+1)^{-\frac{9}{8}}.
\end{aligned}
\end{equation*}
This concludes the proof of \eqref{infty}.
\end{proof}
We now finish the proof of Theorem \ref{l4decay}.
\begin{proof}[Proof of Theorem \ref{l4decay}]
The proof is divided into two steps.

\textit{Step 1.}
First we demonstrate that for any $ X_{0}>0 $:
\begin{equation}\label{l4decay1}
\int_{0}^{X_{0}}\int_{0}^{\infty}|\phi_{\psi}|^{4}d\psi (X+1)^{5}dX \lesssim (X_{0}+1)^{\frac{1}{2}}.
\end{equation}
By testing \eqref{eqphi} with
\begin{equation*}
\phi\phi_{\psi}^{2}\frac{1}{u}(X+1)^{5},
\end{equation*}
and integrating by parts, we derive the following $ L^{4}$-energy inequality:
\begin{equation}
\begin{aligned}
& \frac{1}{3}\int_{0}^{\infty}|\phi_{\psi}|^{4}d\psi (X+1)^{5}+\int_{0}^{\infty}A|\phi|^{2}|\phi_{\psi}|^{2}\frac{1}{u}d\psi (X+1)^{5}\\
&\quad\quad \leq \left|\int_{0}^{\infty} \phi_{X}\phi \phi_{\psi}^{2}\frac{1}{u}d\psi\right| (X+1)^{5}\\
&\quad\quad \leq \frac{1}{6} \int_{0}^{\infty} |\phi_{\psi}|^{4}d\psi(X+1)^{5}+\frac{3}{2}\int_{0}^{\infty}|\phi_{X}|^{2}|\phi|^{2}\frac{1}{u^{2}}d\psi (X+1)^{5}.
\end{aligned}
\end{equation}
Notice that,
\begin{equation*}
\begin{aligned}
\int_{0}^{\infty}|\phi_{X}|^{2}|\phi|^{2}\frac{1}{u^{2}}d\psi\leq \left\|\frac{\phi}{u}\right\|_{L^{\infty}_{\psi}}^{2}\int_{0}^{\infty}|\phi_{X}|^{2}d\psi\lesssim (X+1)^{-2}\int_{0}^{\infty}|\phi_{X}|^{2}d\psi.
\end{aligned}
\end{equation*}
Combining this with Proposition \ref{h1decay}, we get \eqref{l4decay1}.

\textit{Step 2.} Now we conclude the proof of Theorem \ref{l4decay}. By testing \eqref{eqphi} with
\begin{equation*}
\phi_{X}\phi_{\psi}^{2}\frac{1}{u}(X+1)^{6},
\end{equation*}
and integrating by parts, we obtain the following energy inequality:
\begin{equation}\label{l4decay2}
\begin{aligned}
&\frac{\partial_{X}}{12}\left(\int_{0}^{\infty}|\phi_{\psi}|^{4}d\psi (X+1)^{6}\right)+\int_{0}^{\infty}|\phi_{X}|^{2}|\phi_{\psi}|^{2}\frac{1}{u}d\psi (X+1)^{6}\\
&\quad=\frac{1}{2}\int_{0}^{\infty}|\phi_{\psi}|^{4}d\psi (X+1)^{5}-\int_{0}^{\infty}A\phi\phi_{X}\phi_{\psi}^{2}\frac{1}{u}d\psi (X+1)^{6}\\
&\quad\leq \frac{1}{2}\int|\phi_{\psi}|^{4}(X+1)^{5}+\frac{1}{2}\int|\phi_{X}|^{2}|\phi_{\psi}|^{2}\frac{1}{u}(X+1)^{6}+\frac{1}{2}\int|A|^{2}|\phi|^{2}|\phi_{\psi}|^{2}\frac{1}{u}(X+1)^{6}.
\end{aligned}
\end{equation}
Utilizing Proposition \ref{l2decay} and Proposition \ref{h1decay}, we have
\begin{equation*}
\int|A|^{2}|\phi|^{2}|\phi_{\psi}|^{2}\frac{1}{u}\leq \|A\|_{L^{\infty}_{\psi}}^{2}\left\|\frac{\phi}{u^{\frac{1}{2}}}\right\|_{L^{\infty}_{\psi}}^{2}\int |\phi_{\psi}|^{2}\lesssim (X+1)^{-\frac{13}{2}}.
\end{equation*}
Combining \eqref{l4decay1} and \eqref{l4decay2}, we obtain
\begin{equation*}
\int_{0}^{\infty}|\phi_{\psi}|^{4}d\psi(X+1)^{6}\lesssim (X+1)^{\frac{1}{2}},
\end{equation*}
which establishes Theorem \ref{l4decay}.
\end{proof}
\begin{remark}
The typical method for improving the regularity of $ \phi $ involves differentiating \eqref{eqphi} with respect to $ X $ and estimating $ \|\phi_{X}\|_{L^{2}_{\psi}} $, as well as $ \|\phi_{\psi\psi}\|_{L^{2}_{\psi}} $. However, given that we are working with a \emph{low regularity framework}, differentiating \eqref{eqphi} could introduce additional obstacles. Therefore, the benefit of Theorem \ref{l4decay} lies in avoiding such additional complications.
\end{remark}
Now, we establish the convergence rate in physical coordinates.

\begin{theorem}\label{physicaldecay}
In the physical coordinates, the following estimate holds:
\begin{equation*}
\|u-\bar{u}\|_{L^{\infty}_{y}} \lesssim (x+1)^{-1}.
\end{equation*}
\end{theorem}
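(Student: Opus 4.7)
My strategy is to translate the sharp pointwise bound in von Mises coordinates (Theorem \ref{vonmisedecay}) to physical coordinates, accounting for the fact that at a given $(x,y)$ the two stream functions
\[
\psi_{1}(y) := \int_{0}^{y}u(x,s)\,ds, \qquad \psi_{2}(y) := \int_{0}^{y}\bar{u}(x,s)\,ds
\]
take different values. The cornerstone is the pointwise identity
\[
u(x,y) - \bar{u}(x,y) = \bigl[u(X,\psi_{1}(y)) - \bar{u}(X,\psi_{1}(y))\bigr] + \bigl[\bar{u}(X,\psi_{1}(y)) - \bar{u}(X,\psi_{2}(y))\bigr] =: I + II.
\]

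The first term $I$ is the same-$\psi$ difference that Theorem \ref{vonmisedecay} controls directly. In the near-boundary region $y \lesssim \sqrt{x+1}$, Lemma \ref{Lm3} together with $u(x,s) \sim s/\sqrt{x+1}$ for small $s$ gives $\psi_{1}(y) \sim y^{2}/\sqrt{x+1} \leq \sqrt{X+1}$, so the second case of Theorem \ref{vonmisedecay} yields $|I| \lesssim \psi_{1}^{1/4}(X+1)^{-9/8} \sim y^{1/2}(x+1)^{-5/4} \leq (x+1)^{-1}$. In the far region $y \gtrsim \sqrt{x+1}$ we have $\psi_{1}(y) \gtrsim \sqrt{X+1}$, and the first case of Theorem \ref{vonmisedecay} directly provides $|I| \lesssim (x+1)^{-1}$.

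For $II$ I will use the change of variable $d\psi = \bar{u}(x,y')\,dy'$ along the $\bar{u}$-trajectory to rewrite
\[
\bar{u}(X,\psi_{1}(y)) - \bar{u}(X,\psi_{2}(y)) = \int_{\psi_{2}(y)}^{\psi_{1}(y)}\frac{\bar{u}_{y}(x,y')}{\bar{u}(x,y')}\,d\psi = \bar{u}(x,\tilde{y}) - \bar{u}(x,y),
\]
where $\tilde{y} := y(X,\psi_{1}(y);\bar{u})$ is the $\bar{u}$-physical position with stream function $\psi_{1}(y)$. By Lemma \ref{comparison}, $\tilde{y}/y \in [1/2,2]$, and the mean value theorem gives $|II| \leq \|\bar{u}_{y}\|_{L^{\infty}}|\tilde{y} - y|$. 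The defining relation $\int_{y}^{\tilde{y}}\bar{u}(x,\cdot)\,dy' = \psi_{1}(y) - \psi_{2}(y) = \int_{0}^{y}(u-\bar{u})\,ds$ yields $|\tilde{y} - y| \leq |\psi_{1}-\psi_{2}|/\min_{[y,\tilde{y}]}\bar{u}$. In the near region, $\bar{u}_{y}\cdot(\min\bar{u})^{-1} \sim 1/y$; plugging in the $y^{1/2}(x+1)^{-5/4}$ bound from $I$ yields $|\psi_{1}-\psi_{2}|(y) \lesssim y^{3/2}(x+1)^{-5/4}$, whence $|II| \lesssim y^{1/2}(x+1)^{-5/4} \leq (x+1)^{-1}$ by a Gronwall-type closure.

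The main obstacle is the far-region $y \gtrsim \sqrt{x+1}$ estimate on $II$: there $\bar{u}_{y} \lesssim e^{-cy^{2}/(x+1)}/\sqrt{x+1}$ decays exponentially, but a crude bound on $|\psi_{1}-\psi_{2}|$ via Theorem \ref{iyer} grows linearly in $y$ and would only deliver $(x+1)^{-1/2}$. To recover the sharp rate, I will exploit the weighted $L^{1}$ control from Proposition \ref{weightt}: the bound $\int_{0}^{\infty}|\phi|\psi/u\,d\psi \lesssim 1$ combined with $\phi \approx 2(u-\bar{u})$ and $d\psi \approx ds$ for large $\psi$ gives $\int_{\sqrt{x+1}}^{\infty}|u-\bar{u}|\,ds \lesssim (x+1)^{-1/2}$. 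Together with the near-region contribution $\int_{0}^{\sqrt{x+1}}|u-\bar{u}|\,ds \lesssim (x+1)^{-1/2}$ already established from the bound on $I$, this produces $|\psi_{1}-\psi_{2}|(y) \lesssim (x+1)^{-1/2}$ uniformly. Multiplying by $\|\bar{u}_{y}\|_{L^{\infty}} \lesssim (x+1)^{-1/2}$ delivers $|II| \lesssim (x+1)^{-1}$, completing the estimate.
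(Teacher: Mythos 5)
Your decomposition into $I$ and $II$ is the same one the paper uses: with $\psi = \psi_{1}(y)$ and $y^{*} = y(X,\psi;\bar{u})$ (your $\tilde{y}$) one has $I = u(x,y) - \bar{u}(x,y^{*})$ and $II = \bar{u}(x,y^{*}) - \bar{u}(x,y)$, which is precisely the splitting in \eqref{twist}. The treatment of $I$ is also the same.

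For $II$ you take a genuinely different route. The paper bounds $\bar{u}_{y}(x,\hat{y}) \lesssim (x+1)^{-1/2}e^{-c(y^{*})^{2}/(x+1)}$ pointwise and allows $|y-y^{*}|$ to grow linearly in $\psi$ (the $B_{2}$ term), letting the Gaussian tail of the Blasius profile absorb that growth. You instead aim for a \emph{uniform} bound $|y-y^{*}|\lesssim (x+1)^{-1/2}$ and then multiply by $\|\bar{u}_{y}\|_{L^{\infty}}\lesssim (x+1)^{-1/2}$. That idea is sound, and it avoids the Gaussian-tail argument entirely; but the way you propose to obtain the uniform bound does not close.

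The problem is circularity. Both your near- and far-region steps run through $|\psi_{1}(y)-\psi_{2}(y)| = |\int_{0}^{y}(u-\bar{u})(x,s)\,ds|$, a physical-coordinate integral of the very quantity being estimated. In the far region you write $\phi\approx 2(u-\bar{u})$ to invoke Proposition \ref{weightt}; but $\phi(X,\psi)$ is the \emph{twisted} subtraction, which in physical variables is $u(x,s)-\bar{u}(x,\tilde{s}(s))$, not $u(x,s)-\bar{u}(x,s)$. Passing from the twisted to the untwisted difference requires control of $|\tilde{s}-s|$, which is what you are trying to produce. In the near region your ``Gronwall-type closure'' on $P(y):=|\psi_{1}-\psi_{2}|(y)$ involves the singular coefficient $\bar{u}_{y}/\bar{u}\sim 1/s$, and whether the Gronwall actually closes depends on the precise constant in front of $1/s$ (one needs $\int_{0}^{y}s^{-C}|I(s)|\,ds$ to converge, which does not hold for $C$ near $2$); this is not automatic and is not verified.

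The clean way to implement your idea --- and it is essentially the step the paper already takes --- is to express $y-y^{*}$ purely in von Mises variables:
\[
y - y^{*} = \int_{0}^{\psi}\left(\frac{1}{u(X,\psi')}-\frac{1}{\bar{u}(X,\psi')}\right)d\psi',
\]
which is not circular since the integrand is a von Mises quantity controlled by Theorem \ref{vonmisedecay}. Splitting at $\sqrt{X+1}$: the near part is $\lesssim (X+1)^{-1/2}$ by the second case of \eqref{infty} and Lemma \ref{Lm3}, exactly as in the paper's $B_{1}$; for the far part, $u\bar{u}\sim 1$ and Proposition \ref{weightt} gives $\int_{\sqrt{X+1}}^{\psi}|u-\bar{u}|(X,\psi')\,d\psi' \leq (X+1)^{-1/2}\int_{0}^{\infty}\frac{\psi'}{u}|\phi|\,d\psi'\lesssim (X+1)^{-1/2}$, which sharpens the paper's crude $B_{2}\lesssim \psi(X+1)^{-1}$ bound to a uniform one. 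Then $|II|\leq \|\bar{u}_{y}\|_{L^{\infty}}|y-y^{*}|\lesssim (x+1)^{-1}$ follows without invoking the exponential decay of $\bar{u}_{y}$ at the intermediate point. So the alternative you have in mind is viable and slightly more economical than the paper's treatment of $B_{2}$, but the missing ingredient is the inverse-function representation of $y-y^{*}$; without it the argument is circular.
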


\begin{proof}
For $x, y \geq 0$, by the mean value theorem, we obtain:
\begin{equation}\label{twist}
\begin{aligned}
|u(x,y) - \bar{u}(x,y)| &\leq |u(x,y) - \bar{u}(x,y^{*})| + |\bar{u}(x,y^{*}) - \bar{u}(x,y)| \\
&\leq |u(x,y) - \bar{u}(x,y^{*})| + \bar{u}_{y}(x,\hat{y}) |y - y^{*}|,
\end{aligned}
\end{equation}
where
\begin{equation*}
y^{*} = \int_{0}^{\psi} \frac{1}{\bar{u}(x,\psi')} d\psi', \quad \psi = \psi(x,y;u) = \int_{0}^{y} u(x,s) ds, \quad \hat{y} \in [\min\{y,y^{*}\},\max\{y,y^{*}\}].
\end{equation*}
By using \eqref{infty}, we get:
\begin{equation*}
|u(x,y)-\bar{u}(x,y^{*})|=|u(X,\psi)-\bar{u}(X,\psi)|_{X=x}\lesssim (x+1)^{-1}.
\end{equation*}
For the second term, by utilizing Lemma \ref{comparison} and the property that $f''(\cdot)$ is decreasing, we can find a constant $c > 0$ such that:
\begin{equation*}
\bar{u}_{y}(x,\hat{y})=\frac{1}{\sqrt{x+1}}f''\left(\frac{\hat{y}}{\sqrt{x+1}}\right)\leq \frac{1}{\sqrt{x+1}}f''\left(\frac{cy^{*}}{\sqrt{x+1}}\right).
\end{equation*}
By applying Lemma \ref{Blasius}, we obtain:
\begin{equation}\label{mean}
\bar{u}_{y}(x,\hat{y}) \leq \frac{1}{\sqrt{x+1}}f''\left(\frac{cy^{*}}{\sqrt{x+1}}\right) \leq \frac{1}{\sqrt{x+1}}\exp\left[-\frac{c^{2}(y^{*})^{2}}{x+1}\right].
\end{equation}
It is worth noting that:
\begin{equation*}
\begin{aligned}
|y-y^{*}|&=\left|\int_{0}^{\psi}\frac{1}{u(X,\psi')}d\psi'-\int_{0}^{\psi}\frac{1}{\bar{u}(X,\psi')}d\psi'\right|\\
&\leq \left|\int_{0}^{\sqrt{X+1}}\frac{1}{u(X,\psi')}d\psi'-\int_{0}^{\sqrt{X+1}}\frac{1}{\bar{u}(X,\psi')}d\psi'\right|\\
&\quad\quad+\left|\int_{\sqrt{X+1}}^{\psi}\frac{1}{u(X,\psi')}d\psi'-\int_{\sqrt{X+1}}^{\psi}\frac{1}{\bar{u}(X,\psi')}d\psi'\right|\\
&=: B_{1}+B_{2}.
\end{aligned}
\end{equation*}
Let us start by estimating the term $ B_{1} $. Using \eqref{infty}, we get:
\begin{equation*}
\begin{aligned}
\left| \int_{0}^{\sqrt{X+1}} \left( \frac{1}{u(X,\psi')} - \frac{1}{\bar{u}(X,\psi')} \right) d\psi' \right| & \leq \int_{0}^{\sqrt{X+1}} \frac{|u(X,\psi') - \bar{u}(X,\psi')|}{u(X,\psi')\bar{u}(X,\psi')} d\psi' \\
& \lesssim (X+1)^{\frac{1}{2}} \int_{0}^{\sqrt{X+1}} \frac{|u(X,\psi') - \bar{u}(X,\psi')|}{\psi'} d\psi' \\
& \lesssim (X+1)^{-\frac{5}{8}} \int_{0}^{\sqrt{X+1}} \psi^{-\frac{3}{4}}d\psi \\
& \lesssim (X+1)^{-\frac{1}{2}}.
\end{aligned}
\end{equation*}

Next, let us consider the term $B_{2}$. For $\psi \geq \sqrt{X+1}$, we have $u(X,\psi), \bar{u}(X,\psi) \gtrsim 1$. Hence:
\begin{equation*}
\begin{aligned}
B_{2} & \leq \int_{\sqrt{X+1}}^{\psi} \frac{|u(X,\psi') - \bar{u}(X,\psi')|}{u(X,\psi')\bar{u}(X,\psi')} d\psi' \lesssim (X+1)^{-1} \int_{0}^{\psi} d\psi' \leq \psi (X+1)^{-1}.
\end{aligned}
\end{equation*}

In conclusion, using \eqref{twist} and \eqref{mean}, we obtain:
\begin{equation*}
\begin{aligned}
|u(x,y) - \bar{u}(x,y)| & \lesssim (x+1)^{-1} + e^{-\frac{c^{2}(y^{*})^{2}}{x+1}} \left[ (x+1)^{-1} + \frac{\psi(x,y;u)}{(x+1)^{\frac{3}{2}}} \right] \\
& \lesssim (x+1)^{-1} + e^{-\frac{c^{2}(y^{*})^{2}}{x+1}} \left[ (x+1)^{-1} + (x+1)^{-1} f\left(\frac{y^{*}}{\sqrt{x+1}}\right) \right] \\
& \lesssim (x+1)^{-1}.
\end{aligned}
\end{equation*}

Thus, we have completed the proof of Theorem \ref{main}.
\end{proof}
\subsection{Sharp $ L^{\infty} $ Convergence Rates between $u$ and $\bar{u}$(Theorem \ref{main2})}\label{pfmain2}
In this subsection, we finish the proof of Theorem \ref{main2}. By the discussion in subsection \ref{pfmain1}, we only need to show that
\begin{equation}\label{mainestimate}
|u(X,\psi)-\bar{u}(X,\psi)|\lesssim M(X,\psi).
\end{equation}
Where
\begin{equation}\label{mxpsi}
M(X,\psi)=
\begin{cases}
(X+1)^{-1},\quad \psi\geq\sqrt{X+1};\\
\psi^{\frac{1}{4}}(X+1)^{-\frac{9}{8}},\quad \psi\leq\sqrt{X+1}.
\end{cases}
\end{equation}
\eqref{mainestimate} is verified by the following series of lemmas.
\begin{lemma}\label{mainlm1}
In von Mises coordinate, define
\begin{equation*}
w(X,\psi)=u^{2}(X,\psi),\quad \bar{w}(X,\psi)=\bar{u}^{2}(X,\psi),
\end{equation*}
then
\begin{equation}
|w(0,\psi)-\bar{w}(0,\psi)|\lesssim \frac{\psi}{1+\psi^{4}}.
\end{equation}
\end{lemma}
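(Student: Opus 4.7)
The plan is to split the analysis according to whether $\psi$ is small or large, since the target bound $\tfrac{\psi}{1+\psi^{4}}$ behaves like $\psi$ near the origin and like $\psi^{-3}$ at infinity. For convenience I denote by $y_{u}(\psi)$ and $y_{\bar u}(\psi)$ the inverses at $X=0$ of $y\mapsto\int_{0}^{y}u_{0}(s)\,ds$ and $y\mapsto\int_{0}^{y}\bar u_{0}(s)\,ds=f(y)$, respectively, so that $w(0,\psi)=u_{0}^{2}(y_{u}(\psi))$ and $\bar w(0,\psi)=(f')^{2}(y_{\bar u}(\psi))$.

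For the far-field regime $\psi\geq 1$ I would first compare $\psi$ with $y$. Since $0\le u_{0}\le1$ with $|u_{0}-1|\lesssim(1+s^{4})^{-1}$ integrable, one has $y_{u}(\psi)-\psi\to\int_{0}^{\infty}(1-u_{0}(s))\,ds$, and in particular $y_{u}(\psi)\ge\psi$; the analogous estimate holds for $y_{\bar u}$ using $f(y)=y+O(1)$ from Lemma~\ref{Blasius}. Combining with the hypothesis \eqref{additional} and the exponential decay $|1-f'(y)|\lesssim e^{-cy^{2}}$ (again from Lemma~\ref{Blasius}) gives
\begin{equation*}
|w(0,\psi)-1|\lesssim|u_{0}(y_{u})-1|\lesssim(1+\psi^{4})^{-1},\qquad|\bar w(0,\psi)-1|\lesssim e^{-c\psi^{2}},
\end{equation*}
and adding these, together with $(1+\psi^{4})^{-1}\le\psi\,(1+\psi^{4})^{-1}$ for $\psi\ge1$, yields the desired bound on this range.

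For the near-boundary regime $\psi\le 1$ I would use Taylor expansions at $y=0$. Oleinik's conditions $u_{0}(0)=0$, $u_{0}'(0)>0$, and $u_{0}''(y)=O(y^{2})$ give $u_{0}(y)=u_{0}'(0)\,y+O(y^{4})$, from which integration yields $\psi=\tfrac{u_{0}'(0)}{2}y_{u}^{2}+O(y_{u}^{5})$ and, upon inversion, $y_{u}^{2}=\tfrac{2\psi}{u_{0}'(0)}+O(\psi^{5/2})$. Squaring produces the expansion
\begin{equation*}
w(0,\psi)=2u_{0}'(0)\,\psi+O(\psi^{5/2}).
\end{equation*}
The analogous expansion $\bar w(0,\psi)=2f''(0)\,\psi+O(\psi^{5/2})$ follows from the corresponding properties of the Blasius profile (in particular $-f'''(\eta)\sim\eta^{2}$ near zero) in Lemma~\ref{Blasius}. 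Subtracting gives $|w(0,\psi)-\bar w(0,\psi)|\lesssim\psi$ on $[0,1]$, which matches $\tfrac{\psi}{1+\psi^{4}}\sim\psi$ there.

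The main obstacle I anticipate is the bookkeeping in the near-boundary step: the degeneracy $u_{0}(0)=0$ forces the inverse map $\psi\mapsto y_{u}(\psi)$ to involve a square root, so one must track remainder terms through two nonlinear operations (inversion and squaring) without losing the cancellation of the linear-in-$\psi$ parts of $w$ and $\bar w$. The far-field estimate is essentially routine once the relation $y\sim\psi$ at infinity has been established from \eqref{additional} and the Blasius asymptotics.
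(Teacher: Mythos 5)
Your argument is correct, but it differs from the paper's in both regimes, and in an instructive way. For $\psi\ge1$ you bound $|w(0,\psi)-1|$ and $|\bar w(0,\psi)-1|$ separately against the far-field value $1$, using $y_{u}(\psi)\ge\psi$ (from $u_0\le1$) and the Blasius asymptotics; this is a clean alternative to the paper's decomposition $w-\bar w=\big(u_0^2-\bar u_0^2\big)(y_1)+\big(\bar u_0^2(y_1)-\bar u_0^2(y_2)\big)$, which requires a mean-value-theorem estimate on the second bracket and a bound on $|y_1-y_2|$; your route avoids comparing $y_1$ and $y_2$ entirely. For $\psi\le1$, however, you undertake a three-step Taylor expansion (of $u_0$, of the inverse map $\psi\mapsto y_u$, and of the square), whereas the cancellation of the linear-in-$\psi$ parts that you are carefully preserving is never needed: the target bound is only $|w-\bar w|\lesssim\psi$, and since both $w(0,\psi)=u_0^2(y_1)\lesssim y_1^2\lesssim\psi$ and $\bar w(0,\psi)\lesssim y_2^2\lesssim\psi$, the triangle inequality $|w-\bar w|\le w+\bar w\lesssim\psi$ already suffices, which is exactly what the paper does. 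So the ``main obstacle'' you flag in the near-boundary bookkeeping is a self-imposed one; what you actually prove there is the stronger statement that $w$ and $\bar w$ agree to second order in $\psi^{1/2}$ up to the mismatch in slopes $u_0'(0)$ versus $f''(0)$, which is more information than the lemma asks for. Both proofs are valid; yours is tighter than necessary near the origin and slightly leaner at infinity.
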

\begin{proof}
Define
\begin{equation*}
y_{1}(\psi)=y(X,\psi;u)|_{X=0},\quad y_{2}(\psi)=y(X,\psi;\bar{u})|_{X=0}=f^{-1}(\psi).
\end{equation*}
By Oleinik's conditions, we have
\begin{equation}\label{inversebound}
\begin{cases}
C_{1}\sqrt{\psi}\leq y_{i}(\psi)\leq C_{2}\sqrt{\psi},\ \psi\leq 1,\\
C_{3}\psi\leq y_{i}(\psi)\leq C_{4}\psi,\ \psi\geq 1,
\end{cases}
\quad i=1,2.
\end{equation}
According to the definition of von Mises transformation,
\begin{equation*}
\begin{aligned}
w(0,\psi)-\bar{w}(0,\psi)&=u_{0}^{2}\left(y_{1}(\psi)\right)-\bar{u}_{0}^{2}\left(y_{2}(\psi)\right)\\
&=\underbrace{u_{0}^{2}\left(y_{1}(\psi)\right)-\bar{u}_{0}^{2}\left(y_{1}(\psi)\right)}_{D_{1}}+\underbrace{\bar{u}_{0}^{2}\left(y_{1}(\psi)\right)-\bar{u}_{0}^{2}\left(y_{2}(\psi)\right)}_{D_{2}}.
\end{aligned}
\end{equation*}
When $\psi\leq 1$,
\begin{equation*}
\begin{aligned}
|w(0,\psi)-\bar{w}(0,\psi)|&\leq u_{0}^{2}\left(y_{1}(\psi)\right)+\bar{u}_{0}^{2}\left(y_{2}(\psi)\right)\\
&\lesssim \left(y_{1}(\psi)\right)^{2}+\left(y_{2}(\psi)\right)^{2}\lesssim \psi.
\end{aligned}
\end{equation*}
When $\psi\geq 1$, the term $D_{1}$ can be estimated directly by \eqref{additional}:
\begin{equation*}
|D_{1}|\lesssim \frac{1}{y_{1}^{3}(\psi)}\lesssim \frac{1}{\psi^{3}}.
\end{equation*}
For the $D_{2}$ term, by mean value theorem, and the fast decay of $\p_{y}\bar{u}_{0}(y)$:
\begin{equation*}
|D_{2}|\leq 2\left|\bar{u}_{0}(\hat{y})\p_{y}\bar{u}_{0}(\hat{y})[y_{1}(\psi)-y_{2}(\psi)]\right|\lesssim \frac{1}{\psi^{3}},
\end{equation*}
where $\hat{y}\in [\min\{y_{1}(\psi),y_{2}(\psi)\},\ \max\{y_{1}(\psi),y_{2}(\psi)\}]$. Hence we finish the proof of Lemma \ref{mainlm1}.
\end{proof}
Recall that in von Mises coordinate, $w=u^{2}$ satisfies the following quasilinear parabolic equation:
\begin{equation}\label{quasiparabolic}
\begin{cases}
w_{X}-\sqrt{w}w_{\psi\psi}=0;\\
w(X,0)=0,\quad w(X,\psi)|_{\psi\uparrow\infty}=1.
\end{cases}
\end{equation}
Now define
\begin{equation}\label{initialdesign}
g_{-}(\psi)=\bar{w}(0,\psi)-\kappa B(\psi),\ g_{+}(\psi)=\bar{w}(0,\psi)+\kappa B(\psi),\ B(\psi)=\frac{\psi}{1+\psi^{4}}.
\end{equation}
Where $0<\kappa\ll1$. 
\begin{remark}The function $B(\psi)$ is designed to satisfy the parabolic compatibility conditions of \eqref{quasiparabolic}.
\end{remark}
Let $w_{-}(X,\psi), w_{+}(X,\psi)$ be the solutions of \eqref{quasiparabolic} with initial data $g_{-}(\psi), g_{+}(\psi)$ respectively. Through the discussion in previous sections, we obtain that
\begin{lemma}\label{mainlm2}There exists $0<\kappa_{0}\ll 1$, such that for $h\in\{\sqrt{w_{-}},\ \sqrt{w_{+}}\}$,
\begin{equation}
|h(X,\psi)-\bar{u}(X,\psi)|\lesssim M(X,\psi),
\end{equation}
whenever $\kappa \leq \kappa_{0}$.
\end{lemma}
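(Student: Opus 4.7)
The plan is to apply Theorem \ref{main}---specifically the von Mises estimate in Theorem \ref{vonmisedecay}---to the two auxiliary solutions $w_{\pm}$ of \eqref{quasiparabolic}. Since Theorem \ref{main} is formulated in terms of hypotheses on the \emph{physical} initial data, the task is to interpret the von-Mises initial profile $g_{\pm}(\psi)=\bar{w}(0,\psi)\pm\kappa B(\psi)$ as coming from some physical profile $u_{\pm,0}(y)$ and then verify that $u_{\pm,0}$ meets both the Oleinik regularity assumptions of Theorem \ref{oleinik} and the weighted smallness assumption \eqref{decayassume}, with smallness controlled by a universal constant times $\kappa$.

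First I would check the well-posedness of the physical initial profile. Since $B(\psi)=\psi/(1+\psi^{4})$ vanishes linearly at $\psi=0$ and decays like $\psi^{-3}$ at infinity, and since by Lemma \ref{Lm3} we have $\bar{w}(0,\psi)\gtrsim \min\{\psi,1\}$, for $\kappa$ small one has $g_{\pm}(\psi)>0$ on $(0,\infty)$, $g_{\pm}(0)=0$, and $\lim_{\psi\to\infty}g_{\pm}(\psi)=1$. Inverting the von-Mises transform via $\psi'(y)=u_{\pm,0}(y)$ with $u_{\pm,0}(y)=\sqrt{g_{\pm}(\psi(y))}$ gives a well-defined smooth $u_{\pm,0}$ that meets Oleinik's conditions; the vanishing $B(0)=0$ built into the design of $B$ ensures the parabolic compatibility conditions at the boundary.

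Next I would verify \eqref{decayassume}. Set $\rho_{0}(y):=u_{\pm,0}(y)-\bar{u}_{0}(y)$; by construction the twisted subtraction at $X=0$ is $\phi_{0}(\psi)=\pm\kappa B(\psi)$. Mimicking the pointwise estimate in the proof of Lemma \ref{comparison}, $\rho_{0}$ is controlled by $\phi_{0}/(2\bar{u}_{0})$ plus an error arising from the two profiles being evaluated at slightly shifted vertical coordinates, yielding $|\rho_{0}(y)|\lesssim \kappa\,\min\{1,\,y^{-3}\}$. Differentiating the identity $u_{\pm,0}(y)^{2}=g_{\pm}(\psi(y))$ via $\partial_{y}=u_{\pm,0}\,\partial_{\psi}$ and invoking $|B'(\psi)|\lesssim (1+\psi)^{-4}$, $|B''(\psi)|\lesssim (1+\psi)^{-5}$ together with the Blasius asymptotics of Lemma \ref{Blasius}, one obtains $|\partial_{y}^{j}\rho_{0}(y)|\lesssim \kappa\,(1+y)^{-3-j}$ for $y\geq 1$ and uniform bounds by $\kappa$ on bounded sets. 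Multiplying by the weight $(1+y)$ and taking $L^{1}$ or $L^{2}$ norms produces bounds of order $\kappa$, which verifies \eqref{decayassume}.

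With the hypotheses of Theorem \ref{main} satisfied by $u_{\pm,0}$, Theorem \ref{vonmisedecay} applied to $w_{\pm}$ yields exactly $|\sqrt{w_{\pm}}(X,\psi)-\bar{u}(X,\psi)|\lesssim M(X,\psi)$, which is the assertion of the lemma. The main obstacle I expect is the second-derivative bound in the previous step: differentiating the implicit relation twice produces combinations of $B''(\psi)$, $(B'(\psi))^{2}/u_{\pm,0}$, and a term involving $\bar{u}_{0}''$, and one must simultaneously exploit the boundary behavior $u_{\pm,0}(y)\sim y$ near $y=0$ (to rule out spurious singularities at the wall) and the $\psi^{-3}$ decay of $B$ at infinity (to absorb the weight $(1+y)$) in order to close the estimate with constant of order $\kappa$.
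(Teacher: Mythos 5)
Your approach is correct in outline but takes a noticeably longer route than the paper's. The paper's proof observes that every estimate in the chain running from Lemma \ref{weight} through Propositions \ref{weightt}--\ref{h1decay} to Theorem \ref{vonmisedecay} sees the initial data only through von--Mises-coordinate norms of $\phi_0$ (essentially the weighted $L^1$ norm used in Lemma \ref{weight} and the $E_{\text{initial}}$-type norm controlling Theorem \ref{iyer}); it then verifies in one line that $\phi_0=\pm\kappa B(\psi)$ with $B(\psi)=\psi/(1+\psi^4)$ satisfies those norms with size $O(\kappa)$, and concludes directly. You instead insist on returning to physical coordinates: you reconstruct $u_{\pm,0}(y)=\sqrt{g_\pm(\psi(y))}$ by inverting the von Mises transform and then verify \eqref{decayassume}. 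That path does work — the clean identities $u_{\pm,0}'=\tfrac12 g_\pm'(\psi)$ and $u_{\pm,0}''=\tfrac12 g_\pm''(\psi)\,u_{\pm,0}$, combined with the Blasius asymptotics of Lemma \ref{Blasius}, give $|\partial_y^j\rho_0|\lesssim\kappa(1+y)^{-3-j}$ for $y\ge 1$ once one closes a small bootstrap for the shift $\psi(y)-f(y)$ — and the vanishing $B(0)=0$ does guarantee both $u_{\pm,0}(0)=0$ and Oleinik's compatibility $u_{\pm,0}''(y)=O(y^2)$ near the wall. What the detour costs is precisely the analysis of that shift and of the implicitly defined $u_{\pm,0}$, which you only sketch and flag as the "main obstacle"; what it buys is that you can quote Theorem \ref{main} as a black box rather than re-examine its proof. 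The paper's route is shorter and cleaner because it never leaves von Mises coordinates, while yours is more self-contained for a reader who only remembers the statement of Theorem \ref{main}. One detail worth making explicit if you write it out: the bound $|\rho_0|\lesssim\kappa\min\{1,y^{-3}\}$ requires first establishing $|\psi(y)-f(y)|\lesssim\kappa\min\{y^2,1\}$ by a continuity/Gronwall argument, since $\rho_0$ and the shift control each other circularly.
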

\begin{proof}By the discussion in previous sections, the conclusion of Theorem \ref{vonmisedecay} still holds provided
\begin{equation*}
\|\phi_{0}(1+\psi)\|_{L^{1}_{\psi\in\R^{+}}}+\|\p_{\psi}^{2}\phi_{0}(1+\psi)\|_{L^{2}_{\psi\in\R^{+}}}\leq\kappa\ll 1.
\end{equation*}
Hence Lemma \ref{mainlm2} follows from \eqref{initialdesign}.
\end{proof}
To finish the proof of Theorem \ref{main2}, we introduce a scaling technique to obtain the two-sided bound of $u(X,\psi)$. For positive $\lambda \ll 1, \Lambda \gg 1$, we set for $\psi\geq 0$,
\begin{equation*}
g_{-}^{r}(\psi):=g_{-}(\lambda\psi),\ g_{+}^{r}(\psi):=g_{+}(\Lambda\psi).
\end{equation*}
Therefore, $g_{-}^{r}, g_{+}^{r}$ are the rescaled functions corresponding to $g_{-},g_{+}$ respectively. The following lemma is crucial to derive the two-sided bound of $u(X,\psi)$.
\begin{lemma}\label{mainlm3}
There exist positive $\lambda_{0}\ll 1$ and $\Lambda_{0}\gg 1$, such that
\begin{equation}
g^{r}_{-}(\psi)\leq w(0,\psi)\leq g^{r}_{+}(\psi),
\end{equation}
whenever $\lambda\leq \lambda_{0}, \Lambda\geq \Lambda_{0}$.
\end{lemma}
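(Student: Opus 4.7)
My plan is to verify the pointwise bounds $g_-(\lambda\psi)\le w(0,\psi)\le g_+(\Lambda\psi)$ at $X=0$ by matching the asymptotics of $w(0,\cdot)$ and of the rescaled barriers $g_\pm(\lambda\,\cdot)$, $g_\pm(\Lambda\,\cdot)$ in three regions---small $\psi$, intermediate $\psi$, and large $\psi$---and choosing $\lambda_0$ small enough (resp.\ $\Lambda_0$ large enough) so that the three regions together cover $[0,\infty)$.

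First I would extract the precise asymptotics from the hypotheses. Oleinik's condition $u_0'(0)>0$ together with $\psi=\int_0^y u_0(s)\,ds$ yields $y_1(\psi)=\sqrt{2\psi/u_0'(0)}\,(1+O(\sqrt{\psi}))$ near $\psi=0$, hence $w(0,\psi)=2u_0'(0)\,\psi+O(\psi^{3/2})$; the same computation applied to the Blasius profile gives $\bar w(0,\psi)=2f''(0)\,\psi+O(\psi^{3/2})$, and $B(\psi)=\psi+O(\psi^5)$. At infinity, $|u_0-1|\lesssim(1+y)^{-4}$ combined with $y_1(\psi)\sim\psi$ yields $|w(0,\psi)-1|\lesssim(1+\psi)^{-4}$; Lemma~\ref{Blasius} gives $|\bar w(0,\psi)-1|\lesssim e^{-c\psi^2}$; and $B(\psi)\sim\psi^{-3}$. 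Continuity and Oleinik's strict positivity imply $\inf_{\psi\ge\delta}w(0,\psi)>0$ for every $\delta>0$.

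For the lower bound I split $[0,\infty)$ into $[0,r_1]$, $[r_1,R_\ast(\lambda)]$, and $[R_\ast(\lambda),\infty)$, with $r_1$ fixed and $R_\ast(\lambda):=C(\kappa)\lambda^{-1/5}$. On $[0,r_1]$ the Taylor expansions give $w(0,\psi)-g_-(\lambda\psi)\ge 2\bigl[u_0'(0)-(f''(0)-\kappa/2)\lambda\bigr]\psi-C\psi^{3/2}$, which is non-negative provided $\lambda_0<u_0'(0)/(f''(0)-\kappa/2)$ and $r_1$ is chosen depending only on $(u_0,f)$. On $[R_\ast(\lambda),\infty)$ the inequality reduces to $\kappa B(\lambda\psi)\ge C_0\psi^{-4}$, and the elementary bound $B(t)\ge\tfrac12\min(t,t^{-3})$ shows this holds precisely for $\psi\ge R_\ast(\lambda)$. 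On the intermediate window, $w(0,\psi)\ge w_0>0$ with $w_0$ depending only on $(u_0,r_1)$, while monotonicity of $\bar w(0,\cdot)$ and the Taylor bound give $g_-(\lambda\psi)\le\bar w(0,\lambda R_\ast(\lambda))\lesssim \lambda R_\ast(\lambda)=C(\kappa)\lambda^{4/5}$; shrinking $\lambda_0$ so that $C(\kappa)\lambda_0^{4/5}\le w_0/2$ closes this regime.

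The upper bound $w(0,\psi)\le g_+(\Lambda\psi)$ is handled symmetrically with windows $[0,r_2]$, $[r_2,\tau(\Lambda)]$, $[\tau(\Lambda),\infty)$. Taking $\Lambda_0>u_0'(0)/(f''(0)+\kappa/4)$, the Taylor expansion gives $g_+(\Lambda\psi)\ge w(0,\psi)$ on $[0,r_2]$ for some $r_2\sim 1/\Lambda$; for $\Lambda\psi\ge t_0(\kappa)$ (the threshold at which $\kappa/(2t^3)\ge e^{-ct^2}$), we have $g_+(\Lambda\psi)\ge 1\ge w(0,\psi)$; on the intermediate window, $\Lambda\psi$ stays in a fixed compact subset of $(0,\infty)$ independent of $\Lambda$, so $g_+(\Lambda\psi)$ has a positive lower bound, while $w(0,\psi)\lesssim\psi\le t_0(\kappa)/\Lambda\to 0$ as $\Lambda\to\infty$. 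The main obstacle is the lower-bound intermediate window $[r_1,R_\ast(\lambda)]$, whose length $\sim\lambda^{-1/5}$ blows up as $\lambda\to 0$; it is controlled by the uniform positivity $w(0,\cdot)\ge w_0$ on $[r_1,\infty)$ combined with the quantitative smallness $\lambda R_\ast(\lambda)=O(\lambda^{4/5})\to 0$, which is the precise slack that makes the patching of all three regimes possible.
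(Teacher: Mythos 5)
Your proposal is correct and takes essentially the same approach as the paper's proof: a three-region patching argument (small $\psi$, intermediate $\psi$, large $\psi$), with the lower bound near $0$ coming from the strict slope $u_0'(0)>0$ against the linear behaviour of $\bar w(0,\cdot)$ scaled by $\lambda$, the intermediate region closed by uniform positivity of $w(0,\cdot)$ against the smallness of $\bar w(0,\lambda\psi)$, and the tail controlled by Lemma~\ref{mainlm1} together with the polynomial tail of $B$. The only differences from the paper's argument are cosmetic: you use a Taylor expansion near $0$ and the sharper tail bound $|w(0,\psi)-\bar w(0,\psi)|\lesssim\psi^{-4}$ with cut-off $\lambda^{-1/5}$, while the paper uses the cruder Oleinik-derived bounds $a\psi\le w(0,\psi)\lesssim\psi$ and $|w(0,\psi)-\bar w(0,\psi)|\lesssim\psi^{-3}$ with cut-off $\lambda^{-1/2}$.
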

\begin{proof}The proof of Lemma \ref{mainlm3} is based on a case-by-case discussion. 

\textit{Step 1.} First we show that $g_{-}^{r}(\psi)\leq w(0,\psi)$ for $\lambda\ll1$. By Oleinik's conditions, there exists $a>0,\psi_{0}>0$, such that
\begin{equation}
\begin{cases}
\p_{\psi}w(0,\psi)\geq a,\quad 0\leq\psi\leq\psi_{0},\\
w(0,\psi)\geq a,\quad \psi\geq \psi_{0}.
\end{cases}
\end{equation}
(1a)\ When $\psi\leq \psi_{0}$, 
\begin{equation*}
g_{-}^{r}(\psi)\leq \bar{w}(0,\lambda\psi)\leq C\lambda \psi.
\end{equation*}
(1b)\ When $\psi_{0}\leq \psi\leq \lambda^{-\frac{1}{2}}$,
\begin{equation*}
g_{-}^{r}(\psi)\leq \bar{w}(0,\lambda\psi)\leq C\lambda\psi\leq C\sqrt{\lambda}.
\end{equation*}
Hence,
\begin{equation*}
g_{-}^{r}(\psi)\leq w(0,\psi),\quad 0\leq\psi\leq  \lambda^{-\frac{1}{2}},
\end{equation*}
provided $\lambda\ll 1$.\\
(1c)\ When $\psi\geq \lambda^{-\frac{1}{2}}$, applying Lemma \ref{mainlm1},
\begin{equation*}
\begin{aligned}
w(0,\psi)-g_{-}^{r}(\psi)&\geq \bar{w}(0,\psi)-C\frac{\psi}{1+\psi^{5}}-\bar{w}(0,\lambda\psi)+\kappa B(\lambda\psi)\\
&\geq \kappa B(\lambda\psi)-C\frac{\psi}{1+\psi^{5}}\\
&=\left(\kappa\lambda\frac{1+\psi^{5}}{1+(\lambda\psi)^{4}}-C\right)\frac{\psi}{1+\psi^{5}}\\
&\geq\left(\kappa\lambda\frac{1+\lambda^{-\frac{5}{2}}}{1+\lambda^{2}}-C\right)\frac{\psi}{1+\psi^{5}}\\
&\geq 0,
\end{aligned}
\end{equation*}
provided
\begin{equation*}
\kappa\lambda^{-\frac{3}{2}}\geq 2C.
\end{equation*}

\textit{Step 2.} Next we show that $w(0,\psi)\leq g_{+}^{r}(\psi)$ for $\Lambda\gg 1$.\\
(2a)\ When $\psi\leq \Lambda^{-1}$, 
\begin{equation*}
w(0,\psi)\lesssim \psi,\ g_{+}^{r}(\psi)\geq \bar{w}(0,\Lambda\psi)\gtrsim \Lambda\psi.
\end{equation*}
(2b)\ When $\Lambda^{-1}\leq\psi\leq \Lambda^{-\frac{1}{2}}$,
\begin{equation*}
w(0,\psi)\lesssim \psi, g_{+}^{r}(\psi)\geq \bar{w}(0,\Lambda\psi)\geq \bar{w}(0,1).
\end{equation*}
Hence,
\begin{equation*}
w(0,\psi)\leq g_{+}^{r}(\psi),\quad 0\leq\psi\leq \Lambda^{-\frac{1}{2}},
\end{equation*}
provided $\Lambda\gg 1$.\\
(2c)\ When $\psi\geq \Lambda^{-\frac{1}{2}}$, by the Property of Blasius profile,
\begin{equation*}
\begin{aligned}
g_{+}^{r}(\psi)&=\bar{w}(0,\Lambda\psi)+\kappa B(\Lambda \psi)\\
&\geq 1-e^{-c(\Lambda\psi)^{2}}+\kappa\Lambda\frac{\psi}{1+(\Lambda\psi)^{4}}\\
&\geq 1\geq w(0,\psi),
\end{aligned}
\end{equation*}
whenever $\Lambda\gg 1$.
Hence we conclude Lemma \ref{mainlm3}.
\end{proof}
Now let $w_{-}^{r}(X,\psi)$ and $w_{+}^{r}(X,\psi)$ be the solutions of \eqref{quasiparabolic} with initial data $g_{-}^{r}(\psi), g_{+}^{r}(\psi)$ respectively. By the scaling invariance of \eqref{quasiparabolic}, we have
\begin{equation}\label{scaling}
w_{-}^{r}(X,\psi)=w_{-}(\lambda^{2}X,\lambda\psi),\ w_{+}^{r}(X,\psi)=w_{+}(\Lambda^{2} X,\Lambda\psi).
\end{equation}
Furthermore, by virtue of Lemma \ref{mainlm3} and the standard parabolic comparison principle, we obtain that
\begin{equation}\label{twoside}
\sqrt{w_{-}^{r}}(X,\psi)\leq u(X,\psi)\leq \sqrt{w_{+}^{r}}(X,\psi).
\end{equation}
Finally, applying Lemma \ref{mainlm2}, \eqref{scaling}, \eqref{twoside}, and Lemma \ref{mainlm4} below, we obtain that
\begin{equation*}
\begin{aligned}
|u(X,\psi)-\bar{u}(X,\psi)|&\leq |\sqrt{w_{-}^{r}}(X,\psi)-\bar{u}(X,\psi)|+|\sqrt{w_{+}^{r}}(X,\psi)-\bar{u}(X,\psi)|\\
&\leq |\sqrt{w_{-}^{r}}(X,\psi)-\bar{u}(\lambda^{2}X,\lambda\psi)|+|\sqrt{w_{+}^{r}}(X,\psi)-\bar{u}(\Lambda^{2}X,\Lambda\psi)|\\
&\quad +|\bar{u}(\lambda^{2}X,\lambda\psi)-\bar{u}(X,\psi)|+|\bar{u}(\Lambda^{2}X,\Lambda\psi)-\bar{u}(X,\psi)|\\
&\lesssim M(\lambda^{2} X,\lambda \psi)+M(\Lambda^{2} X,\Lambda \psi)\\
&\quad +|\bar{u}(\lambda^{2}X,\lambda\psi)-\bar{u}(X,\psi)|+|\bar{u}(\Lambda^{2}X,\Lambda\psi)-\bar{u}(X,\psi)|\\
&\lesssim_{\lambda,\Lambda}M(X,\psi),
\end{aligned}
\end{equation*}
which shows the vadility of \eqref{mainestimate}, hence we finish the proof of Theorem \ref{main2}.
\begin{lemma}\label{mainlm4}
For any $\mu>0$, we have
\begin{equation}
|\bar{u}(\mu^{2}X,\mu\psi)-\bar{u}(X,\psi)|\lesssim_{\mu} M(X,\psi).
\end{equation}
\end{lemma}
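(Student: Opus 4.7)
\textbf{Proof plan for Lemma \ref{mainlm4}.} Using Lemma \ref{Lm3}, I would express the Blasius profile in the von Mises coordinates as $\bar{u}(X,\psi) = g(s)$ where $g(s) := f'(f^{-1}(s))$ and $s = \psi/\sqrt{X+1}$. Setting
\begin{equation*}
s_1 := \frac{\psi}{\sqrt{X+1}}, \qquad s_2 := \frac{\mu\psi}{\sqrt{\mu^2 X+1}},
\end{equation*}
the problem reduces to bounding $|g(s_2)-g(s_1)|$ by $M(X,\psi)$. A direct computation using the conjugate factor $\mu\sqrt{X+1}+\sqrt{\mu^2 X+1}$ yields
\begin{equation*}
s_2-s_1 \;=\; \frac{(\mu^2-1)\,\psi}{\bigl(\mu\sqrt{X+1}+\sqrt{\mu^2 X+1}\bigr)\,\sqrt{\mu^2 X+1}\,\sqrt{X+1}},
\end{equation*}
so that $|s_2-s_1|\lesssim_\mu \psi(X+1)^{-3/2}$ uniformly in $X\geq 0$, and moreover $s_1\sim_\mu s_2$.

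Next, I would analyze $g'(s) = f''(f^{-1}(s))/f'(f^{-1}(s))$ using Lemma \ref{Blasius}. For $s\lesssim 1$ one has $f^{-1}(s)\sim\sqrt{s}$, $f''\sim 1$, $f'\sim \sqrt{s}$, giving $|g'(s)|\lesssim s^{-1/2}$; for $s\gtrsim 1$ one has $f^{-1}(s)\sim s$, $f''\lesssim e^{-cs^2}$, $f'\sim 1$, giving $|g'(s)|\lesssim e^{-cs^2}$. The desired estimate then follows from $|g(s_2)-g(s_1)|\leq |s_2-s_1|\sup_{s\in I}|g'(s)|$, where $I$ is the interval joining $s_1$ and $s_2$.

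I would split into two cases matching the definition of $M(X,\psi)$. When $\psi\geq \sqrt{X+1}$, then $s_1,s_2\gtrsim_\mu 1$, so $\sup_I|g'|\lesssim_\mu e^{-c_\mu \psi^2/(X+1)}$, and hence
\begin{equation*}
|g(s_2)-g(s_1)| \;\lesssim_\mu\; \frac{\psi}{(X+1)^{3/2}}\,e^{-c_\mu \psi^2/(X+1)} \;\lesssim_\mu\; \frac{1}{X+1},
\end{equation*}
using $t e^{-ct^2}\lesssim 1$ with $t=\psi/\sqrt{X+1}\geq 1$. When $\psi\leq \sqrt{X+1}$, then $s_1,s_2\lesssim_\mu 1$ and the smaller of them is comparable to $\psi/\sqrt{X+1}$, giving $\sup_I|g'|\lesssim_\mu (X+1)^{1/4}\psi^{-1/2}$, so
\begin{equation*}
|g(s_2)-g(s_1)| \;\lesssim_\mu\; \frac{\psi^{1/2}}{(X+1)^{5/4}} \;=\; \psi^{1/4}(X+1)^{-9/8}\cdot\frac{\psi^{1/4}}{(X+1)^{1/8}} \;\leq\; \psi^{1/4}(X+1)^{-9/8},
\end{equation*}
the last inequality using $\psi\leq\sqrt{X+1}$.

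The proof is entirely computational once the self-similar representation $\bar{u}(X,\psi)=g(\psi/\sqrt{X+1})$ is written down. The only mild obstacle is the case $\psi\leq \sqrt{X+1}$, where the Jacobian $1/f'(\eta)$ blows up as $\eta\downarrow 0$; this is absorbed by the sharp two-sided asymptotics $f(\eta)\sim\eta^2$, $f'(\eta)\sim\eta$, $f''(\eta)\sim 1$ near the boundary given by Lemma \ref{Blasius}, which produce exactly the $\psi^{1/4}$ weight appearing in $M(X,\psi)$.
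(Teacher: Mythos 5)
Your proof is correct and follows the same route the paper takes: write $\bar{u}(X,\psi)=f'\circ f^{-1}(\psi/\sqrt{X+1})$ in von Mises coordinates and invoke the Blasius asymptotics of Lemma \ref{Blasius}. The paper states only this representation and says the properties of $f$ "finish the proof"; you have supplied the details (the conjugate-factor computation of $s_2-s_1$, the two-regime bound on $g'$, and the split according to $\psi\lessgtr\sqrt{X+1}$), all of which check out.
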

\begin{proof}Just note that in the von Mises coordinate,
\begin{equation*}
\bar{u}(X,\psi)=f'\circ f^{-1}(\zeta),\quad \zeta=\frac{\psi}{\sqrt{X+1}}.
\end{equation*}
By virtue of the properties of Blasius solution(Lemma \ref{Blasius}), we finish the proof.
\end{proof}
\section*{Acknowledgement}
	Hao Jia is supported by NSF grant DMS-1945179. Zhen Lei is in part supported by NSFC (No. 11725102), Sino-German Center (No. M-0548), the National Key R\&D Program of China (No. 2018AAA0100303), National Support Program for Young Top-Notch Talents, Shanghai Science and Technology Program (No. 21JC1400600 and No. 19JC1420101) and New Cornerstone Science Foundation through the XPLORER PRIZE. Cheng Yuan is supported by NSFC (No. 123B2008).

\appendix
\section{Appendix: Proof of Theorem \ref{iyer}}\label{pfiyer}
In this section, we present the proof of Theorem \ref{iyer}. It is recommended that readers have familiarity with fundamental inequalities stated in Lemma \ref{sobolev}, weighted embedding inequalities, and energy estimation techniques as discussed in \cite{MR4097332}. 
In reference to Iyer's work \cite{MR4097332}, we define the following weighted energies:
\begin{equation}
\begin{aligned}
\|\phi\|_{E_{0}}^{2}&=\sup_{X\geq 0}\left\|\phi\frac{1}{\sqrt{u}}(1+\psi^{(\frac{1}{2}-\mu)})\right\|_{L^{2}_{\psi\in\R^{+}}}^{2}+\left\|\phi_{\psi}(1+\psi^{(\frac{1}{2}-\mu)})\right\|_{L^{2}_{X\in\R^{+}\psi\in\R^{+}}};\\
\|\phi\|_{E_{1}}^{2}&=\sup_{X\geq 0}\left\|\phi_{\psi}(1+\psi^{(\frac{1}{2}-\mu)})(X+1)^{(\frac{1}{2}-\theta)}\right\|_{L^{2}_{\psi\in\R^{+}}}^{2}\\
&\quad\quad+\left\|\phi_{X}\frac{1}{\sqrt{u}}(1+\psi^{(\frac{1}{2}-\mu)})(X+1)^{(\frac{1}{2}-\theta)}\right\|_{L^{2}_{X\in\R^{+},\psi\in\R^{+}}}^{2};\\
\|\phi\|_{E_{2}}^{2}&=\sup_{X\geq 0}\left\|\phi_{X}\frac{1}{\sqrt{u}}(1+\psi^{(\frac{1}{2}-\mu)})(X+1)^{(1-\theta)}\right\|_{L^{2}_{\psi\in\R^{+}}}^{2}\\
&\quad\quad+\left\|\phi_{X\psi}(1+\psi^{(\frac{1}{2}-\mu)})(X+1)^{(1-\theta)}\right\|_{L^{2}_{X\in\R^{+},\psi\in\R^{+}}}^{2}.
\end{aligned}
\end{equation}
We also define the following norms for initial data:
\begin{equation}
\begin{aligned}
\|\phi_{0}\|_{E_{0,0}}^{2}&=\int_{0}^{\infty}|\phi_{0}(\psi)|^{2}\frac{1}{u}(1+\psi^{(1-2\mu)})d\psi;\\
\|\phi_{0}\|_{E_{1,0}}^{2}&=\int_{0}^{\infty}|\phi_{0}'(\psi)|^{2}(1+\psi^{(1-2\mu)})d\psi;\\
\|\phi_{0}\|_{E_{2,0}}^{2}&=\int_{0}^{\infty}|\phi_{X}(0,\psi)|^{2}\frac{1}{u}(1+\psi^{(1-2\mu)})d\psi;\\
\|\phi_{0}\|_{E_{\text{initial}}}^{2}&=\sum_{j=0}^{2}\|\phi_{0}\|_{E_{j,0}}^{2}.
\end{aligned}
\end{equation}
Here we fix $\theta=\frac{1}{1000}, \mu=\frac{1}{500}$. We next set the functional space
\begin{equation*}
E_{\leq 2}=\left\{\phi: \|\phi\|_{E_{\leq 2}}:=\sum_{j=0}^{2}\|\phi\|_{E_{j}}<\infty\right\}.
\end{equation*}
Then, Theorem \ref{iyer} is a direct consequence of the following Lemma \ref{initial} --- Lemma \ref{lm63}.

First, we show that under the assumption \eqref{smallnorm}, the initial data (in von Mises coordinate) $\phi_{0}(\psi)$ lies in the space $E_{\text{initial}}$.

\begin{lemma}\label{initial}
Assume that the initial data in physical coordinate $u_{0}(y)$ satisfies
\begin{equation}\label{norm}
\sum_{j\leq 2}\left\|(1+y)\partial_{y}^{j}(u_{0}-\bar{u}_{0})(y)\right\|_{L^{2}(\R^{+})}\leq \kappa \ll 1.
\end{equation}
Then, under the von Mises transform, $\phi_{0}(\psi)$ satisfies 
\begin{equation}\label{smallinitial}
\|\phi_{0}\|_{E_{\text{initial}}}\lesssim \kappa \ll 1.
\end{equation}
\end{lemma}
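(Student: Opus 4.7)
The plan is to convert the weighted integrals in $\psi$ into weighted integrals in the physical variable $y$ via the change of measure $d\psi = u_{0}(y)\,dy$, and to estimate the resulting integrands directly in terms of $\eta(y) := u_{0}(y) - \bar{u}_{0}(y)$, $\eta'$, and $\eta''$. Write $y_{1}(\psi) = y(0,\psi;u)$ and $y_{2}(\psi) = y(0,\psi;\bar{u})$. The identity $\int_{y_{1}}^{y_{2}}\bar{u}_{0}(s)\,ds = \int_{0}^{y_{1}}\eta(s)\,ds$, together with $\|\eta\|_{L^{\infty}}\lesssim \kappa$ (Agmon applied to \eqref{norm}) and $\bar{u}_{0}(s)\sim \min(s,1)$ from Lemma \ref{Blasius}, gives $|y_{1}-y_{2}|\lesssim \kappa$ uniformly in $\psi$, while the $X=0$ case of Lemma \ref{comparison} independently yields $y_{1}\sim y_{2}$. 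Under the change of variable the weight $u_{0}(y)^{-1}(1+\psi^{1-2\mu})\,d\psi$ becomes comparable to $(1+y)^{1-2\mu}\,dy$, since $\psi\sim y^{2}$ near $y=0$ and $\psi\sim y$ at infinity, and the factor $1/u_{0}(y)\sim 1/y$ near the wall is compensated by the Jacobian $u_{0}(y)\,dy$.

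For $\|\phi_{0}\|_{E_{0,0}}$ and $\|\phi_{0}\|_{E_{1,0}}$ I decompose $\phi_{0}(\psi) = (u_{0}+\bar{u}_{0})(y_{1})\,\eta(y_{1}) + \bigl(\bar{u}_{0}(y_{1})+\bar{u}_{0}(y_{2})\bigr)\bigl(\bar{u}_{0}(y_{1})-\bar{u}_{0}(y_{2})\bigr)$, and, using $dy_{1}/d\psi = 1/u_{0}(y_{1})$ and $dy_{2}/d\psi = 1/\bar{u}_{0}(y_{2})$, obtain $\phi_{0}'(\psi) = 2\eta'(y_{1}) + 2\bigl(\bar{u}_{0}'(y_{1})-\bar{u}_{0}'(y_{2})\bigr)$. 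In each case the $\eta^{(j)}$ term produces directly the $(1+y)$-weighted $L^{2}$ norm from \eqref{norm}, while the remainder is a Blasius difference bounded by $|y_{1}-y_{2}|\lesssim \kappa$ times $\|\bar{u}_{0}'\|_{L^{\infty}}$ or $\|\bar{u}_{0}''\|_{L^{\infty}}$, whose exponential decay (Lemma \ref{Blasius}) keeps the contribution of order $\kappa$.

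The $E_{2,0}$ piece is more delicate because the hypothesis only supplies two spatial derivatives of $\eta$. I would eliminate the $X$-derivative using \eqref{eqphi} at $X=0$, namely $\phi_{X}(0,\psi) = u(0,\psi)\,\phi_{0}''(\psi) - A(0,\psi)\,\phi_{0}(\psi)$, with $\phi_{0}''(\psi) = 2u_{0}''(y_{1})/u_{0}(y_{1}) - 2\bar{u}_{0}''(y_{2})/\bar{u}_{0}(y_{2})$; multiplying by $u(0,\psi) = u_{0}(y_{1})$ yields $u_{0}(y_{1})\,\phi_{0}''(\psi) = 2\eta''(y_{1}) + 2\bar{u}_{0}''(y_{1})\bigl(1 - \tfrac{u_{0}(y_{1})}{\bar{u}_{0}(y_{2})}\bigr) + 2\,\tfrac{u_{0}(y_{1})}{\bar{u}_{0}(y_{2})}\bigl(\bar{u}_{0}''(y_{1})-\bar{u}_{0}''(y_{2})\bigr)$. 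The first term gives the $\eta''$ contribution; the second is $\lesssim |\bar{u}_{0}''(y_{1})|\,|\eta(y_{1})|/\bar{u}_{0}(y_{2})$ and is integrable thanks to the exponential decay of $\bar{u}_{0}''$; the third is a Blasius difference controlled by $|y_{1}-y_{2}|\,\|\bar{u}_{0}'''\|_{L^{\infty}}\lesssim \kappa$. The remaining $A\phi_{0}$ contribution inherits its bound from $\|\phi_{0}\|_{E_{0,0}}$ since $|A(0,\psi)|\lesssim 1$.

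The main technical obstacle is the boundary degeneracy at $\psi=0$: the factor $1/\sqrt{u}\sim \psi^{-1/4}$ appearing in $\|\phi_{0}\|_{E_{0,0}}$ and $\|\phi_{0}\|_{E_{2,0}}$ looks singular, but it is tamed by the simultaneous vanishing of $\phi_{0}$ and $\eta$ at $y=0$ together with the Jacobian $u_{0}(y)\,dy$ in the change of measure; the small exponent $\mu = 1/500$ provides generous room for a weighted Hardy inequality (Lemma \ref{sobolev}(3)) should any residual endpoint singularity need to be absorbed. All other steps are routine change-of-variables bookkeeping, and the constant $\kappa$ propagates linearly through every estimate.
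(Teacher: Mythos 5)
Your argument is essentially the paper's: express $\phi_0,\phi_0'$, and $\phi_X(0,\cdot)$ through the von Mises inverse maps $y_1(\psi),y_2(\psi)$, split each into an $\eta^{(j)}$-term evaluated at the common point $y_1$ plus a Blasius difference $\bar u_0^{(j)}(y_1)-\bar u_0^{(j)}(y_2)$, convert back to physical weights via $d\psi=u_0\,dy$, and bound the Blasius part by $|y_1-y_2|$ times the (exponentially decaying) derivatives of $\bar u_0$; your detour for $E_{2,0}$ through $\phi_X(0,\psi)=u(0,\psi)\phi_0''(\psi)-A(0,\psi)\phi_0(\psi)$ is an equivalent repackaging of the paper's direct identity $\phi_X(0,\psi)=2u_0''(y_1)-2\bar u_0''(y_2)$, and your observation $|A(0,\psi)|\lesssim 1$ is correct because $\bar u_{yy}$ vanishes quadratically at the wall. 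One slip worth flagging: $\|\eta\|_{L^{\infty}}\lesssim\kappa$ alone does not yield $|y_1-y_2|\lesssim\kappa$ uniformly, since for large $y_1$ it only controls $\left|\int_0^{y_1}\eta\right|$ by $\kappa y_1$; what is actually needed (and what the paper uses) is $\|\eta\|_{L^{1}}\lesssim\kappa$, which does follow from \eqref{norm} by Cauchy--Schwarz against $(1+y)^{-1}\in L^{2}$, and although the exponential decay of $f'',f''',f^{(4)}$ would in fact absorb the weaker $\lesssim\kappa y_1$ bound, the uniform estimate as you stated it is not established by the $L^{\infty}$ control you cited.
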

\begin{comment}
Before proving the above conclusion, we require the following lemma.

\begin{lemma}\label{A2}
Under the assumptions of Theorem \ref{initial}, we have the following estimates:
\begin{align}
&|v_{0}(y)-\bar{v}_{0}(y)|\lesssim
\begin{cases}\label{a}
\kappa y^{2};\quad y\leq 1,\\
\kappa;\quad y\geq 1;
\end{cases}\\
&|u_{x}(0,y)-\bar{u}_{x}(0,y)|\lesssim
\begin{cases}\label{b}
\kappa y;\quad y\leq 1,\\
\kappa y^{-3};\quad y\geq 1;
\end{cases}\\
&|u_{xy}(0,y)-\bar{u}_{xy}(0,y)|\lesssim
\begin{cases}\label{c}
\kappa ;\quad y\leq 1,\\
\kappa y^{-3};\quad y\geq 1;
\end{cases}\\
&|u_{xyy}(0,y)-\bar{u}_{xyy}(0,y)|\lesssim
\begin{cases}\label{d}
\kappa ;\quad y\leq 1,\\
\kappa y^{-3};\quad y\geq 1.
\end{cases}
\end{align}
\end{lemma}
We acknowledge Lemma \ref{A2} temporarily and proceed to complete the proof of Theorem \ref{initial}.
\end{comment}
\begin{proof} For convenience, let us define
\begin{equation*}
y_{1}(\psi)=y(X,\psi;u)|_{X=0},\quad y_{2}(\psi)=y(X,\psi;\bar{u})|_{X=0}=f^{-1}(\psi),
\end{equation*}
where $ f $ is the solution of the Blasius equation \eqref{blasiusode}.

Firstly, we estimate the difference between $ y_{1}(\psi) $ and $ y_{2}(\psi) $. For $ \psi\leq1 $, by using \eqref{norm} and Sobolev embedding, we have:
\begin{equation*}
\int_{0}^{y_{1}(\psi)}(\bar{u}_{0}(y)-\kappa y)dy\leq\int_{0}^{y_{1}(\psi)}u_{0}(y)dy\leq\int_{0}^{y_{1}(\psi)}(\bar{u}_{0}(y)+\kappa y)dy,
\end{equation*}
we have
\begin{equation*}
f\big(y_{1}(\psi)\big)-\frac{\kappa}{2}\big(y_{1}(\psi)\big)^{2}\leq\psi\leq f\big(y_{1}(\psi)\big)+\frac{\kappa}{2}\big(y_{1}(\psi)\big)^{2}.
\end{equation*}
Thus,
\begin{equation*}
f^{-1}\left[\psi-\frac{\kappa}{2}\big(y_{1}(\psi)\big)^{2}\right]\leq y_{1}(\psi) \leq f^{-1}\left[\psi+\frac{\kappa}{2}\big(y_{1}(\psi)\big)^{2}\right].
\end{equation*}
Consequently,
\begin{equation}\label{minus1}
\begin{aligned}
&|y_{1}(\psi)-y_{2}(\psi)|\\
&\quad\leq \max \left\{f^{-1}\left[\psi+\frac{\kappa}{2}\big(y_{1}(\psi)\big)^{2}\right]-f^{-1}(\psi),\ f^{-1}(\psi)-f^{-1}\left[\psi-\frac{\kappa}{2}\big(y_{1}(\psi)\big)^{2}\right]\right\}\\
&\quad\lesssim \kappa \psi^{\frac{1}{2}}.
\end{aligned}
\end{equation}
For $\psi\geq 1$, similar to Lemma \ref{comparison}, we have:
\begin{equation*}
f\big(y_{1}(\psi)\big)-\kappa \leq \psi \leq f\big(y_{1}(\psi)\big)+\kappa.
\end{equation*}
Therefore,
\begin{equation}\label{minus2}
|y_{1}(\psi)-y_{2}(\psi)|\leq \max\left\{f^{-1}(\psi+\kappa)-f^{-1}(\psi),\ f^{-1}(\psi)-f^{-1}(\psi-\kappa)\right\}\lesssim \kappa,\quad \psi\geq 1.
\end{equation}
To conclude Lemma \ref{initial}, we need to demonstrate that
\begin{equation}\label{wait}
\int_{0}^{\infty}\phi_{0}^{2}\frac{\psi+1}{u_{0}(\psi)}d\psi,\ \int_{0}^{\infty}|\p_{\psi}\phi_{0}|^{2}\frac{\psi+1}{u_{0}(\psi)}d\psi,\ \int_{0}^{\infty}\left|(\p_{X}\phi)(0,\psi)\right|^{2}\frac{\psi+1}{u_{0}(\psi)}d\psi \lesssim \kappa^{2}.
\end{equation}
By abusing notations, here
\begin{equation*}
u_{0}(\psi):=u_{0}\big(y_{1}(\psi)\big).
\end{equation*}
Firstly, let us estimate the first term. Note that
\begin{equation*}
\int_{0}^{\infty}\phi_{0}^{2}\frac{\psi+1}{u_{0}(\psi)}d\psi\lesssim \int_{0}^{1}\phi_{0}^{2}(\psi)\frac{1}{\psi^{\frac{1}{2}}}d\psi +\int_{1}^{\infty}\phi_{0}^{2}(\psi)\psi d\psi=: A+B.
\end{equation*}

\textit{Estimate of $A$.} By the definition of $ \phi $,
\begin{align*}
&\int_{0}^{1}\phi_{0}^{2}(\psi)\frac{1}{\psi^{\frac{1}{2}}}d\psi=\int_{0}^{1}\left|u_{0}^{2}\big(y_{1}(\psi)\big)-\bar{u}_{0}^{2}\big(y_{2}(\psi)\big)\right|^{2}\frac{1}{\psi^{\frac{1}{2}}}d\psi\\
&\quad = \int_{0}^{1}\left|u_{0}^{2}\big(y_{1}(\psi)\big)-\bar{u}_{0}^{2}\big(y_{1}(\psi)\big)\right|^{2}\frac{1}{\psi^{\frac{1}{2}}}d\psi+\int_{0}^{1}\left|\bar{u}_{0}^{2}\big(y_{1}(\psi)\big)-\bar{u}_{0}^{2}\big(y_{2}(\psi)\big)\right|^{2}\frac{1}{\psi^{\frac{1}{2}}}d\psi.
\end{align*}
By a change of variable,
\begin{align*}
\int_{0}^{1}\left|u_{0}^{2}\big(y_{1}(\psi)\big)-\bar{u}_{0}^{2}\big(y_{1}(\psi)\big)\right|^{2}\frac{1}{\psi^{\frac{1}{2}}}d\psi&=\int_{0}^{y_{1}(1)}\left|u_{0}^{2}(y)-\bar{u}_{0}^{2}(y)\right|^{2}\frac{u_{0}(y)dy}{\displaystyle\left(\int_{0}^{y}u_{0}(s)ds\right)^{\frac{1}{2}}}\\
&\lesssim \|u_{0}-\bar{u}_{0}\|_{L^{2}_{y}}^{2}\lesssim \kappa^{2}.
\end{align*}
Utilizing the  mean-value theorem and the estimate of $|y_{1}(\psi)-y_{2}(\psi)|$,
\begin{align*}
\int_{0}^{1}\left|\bar{u}_{0}^{2}\big(y_{1}(\psi)\big)-\bar{u}_{0}^{2}\big(y_{2}(\psi)\big)\right|^{2}\frac{1}{\psi^{\frac{1}{2}}}d\psi&\lesssim \left\|\bar{u}_{0}^{2}\big(y_{1}(\psi)\big)-\bar{u}_{0}^{2}\big(y_{2}(\psi)\big)\right\|_{L^{\infty}_{\psi}}^{2}\\
&\lesssim \|y_{1}(\psi)-y_{2}(\psi)\|_{L^{\infty}_{\psi}}^{2}\lesssim \kappa^{2}.
\end{align*}

\textit{Estimate of $ B $.}
Similarly to the estimates of $ A $,
\begin{equation*}
B=\int_{1}^{\infty}\left|u_{0}^{2}\big(y_{1}(\psi)\big)-\bar{u}_{0}^{2}\big(y_{1}(\psi)\big)\right|^{2}\psi d\psi+\int_{1}^{\infty}\left|\bar{u}_{0}^{2}\big(y_{1}(\psi)\big)-\bar{u}_{0}^{2}\big(y_{2}(\psi)\big)\right|^{2}\psi d\psi.
\end{equation*}
By changing the variable again,
\begin{align*}
\int_{1}^{\infty}\left|u_{0}^{2}\big(y_{1}(\psi)\big)-\bar{u}_{0}^{2}\big(y_{1}(\psi)\big)\right|^{2}\psi d\psi&=\int_{y_{1}(1)}^{\infty}\left|u_{0}^{2}(y)-\bar{u}_{0}^{2}(y)\right|^{2}\left(\int_{0}^{y}u_{0}(s)ds\right)u_{0}(y)dy\\
&\lesssim \int_{y_{1}(1)}^{\infty}|u_{0}^{2}(y)-\bar{u}_{0}^{2}(y)|^{2}y dy\lesssim \kappa^{2}.
\end{align*}
By applying the Fundamental Theorem of Calculus, the estimate of $|y_{1}(\psi)-y_{2}(\psi)|$, and the rapid decay of $ \bar{u}_{0}'$:
\begin{align*}
&\int_{1}^{\infty}\left|\bar{u}_{0}^{2}\big(y_{1}(\psi)\big)-\bar{u}_{0}^{2}\big(y_{2}(\psi)\big)\right|^{2}\psi d\psi= \int_{1}^{\infty}\left|\int_{0}^{1}\frac{d}{dt}\bar{u}_{0}^{2}\left[y_{2}(\psi)+t\big(y_{1}(\psi)-y_{2}(\psi)\big)\right]dt \right|^{2}\psi d\psi \\
&\quad\lesssim \int_{1}^{\infty}\bigg(\int_{0}^{1}\left|\bar{u}_{0}'\left[y_{2}(\psi)+t\Big(y_{1}(\psi)-y_{2}(\psi)\Big)\right]\right|\times |y_{1}(\psi)-y_{2}(\psi)|dt\bigg)^{2}\psi d\psi\\
&\quad\lesssim \int_{1}^{\infty}\kappa^{2}\psi^{-2}d\psi \leq \kappa^{2}.
\end{align*}
Consequently,
\begin{equation}\label{A4}
\int_{0}^{\infty}\phi_{0}^{2}\frac{\psi+1}{u_{0}(\psi)}d\psi\lesssim \kappa^{2}.
\end{equation}
For other terms of \eqref{wait}, using Prandtl's equation \eqref{prandtl}, it is straightforward to compute that
\begin{equation}\label{A5}
\begin{cases}
\p_{\psi}\phi_{0}(\psi)&=2u_{0}'\big(y_{1}(\psi)\big)-2\bar{u}_{0}'\big(y_{2}(\psi)\big);\\
(\p_{X}\phi)(X,\psi)|_{X=0}&=2u_{0}''\big(y_{1}(\psi)\big)-2\bar{u}_{0}''\big(y_{2}(\psi)\big);\\
\end{cases}
\end{equation}
In virtue of \eqref{A5}, and following the same steps as in the estimate of \eqref{A4}, we complete the proof of Lemma \ref{initial}.
\end{proof}

 \begin{lemma}\label{lm61}
 There exists $\kappa\in(0,1)$ sufficiently small such that under the assumptions in Theorem \ref{iyer}, the solution of \eqref{eqphi} exists globally in space $ E_{\leq 2} $.
 \end{lemma}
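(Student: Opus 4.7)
The plan is to prove Lemma \ref{lm61} by a standard continuation/bootstrap argument built on a priori weighted energy estimates in $E_{\leq 2}$. First, I would establish local existence of solutions to \eqref{eqphi} on a short interval $[0,X_{*})$ by standard quasilinear parabolic theory: viewing $u=\sqrt{\bar{u}^{2}+\phi}$ as a coefficient that is strictly positive except at $\psi=0$ and treating the linearized problem with frozen coefficient, a Picard iteration closes in $E_{\leq 2}$ because the weight $(1+\psi^{\frac12-\mu})$ is compatible with the degenerate diffusion $\sqrt{w}\p_{\psi}^{2}$ via Lemma \ref{Lm3}. This reduces global existence to producing a uniform a priori bound $\|\phi\|_{E_{\leq 2}}\lesssim \kappa$.

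The core is the three-tiered energy scheme that mirrors the three components of $\|\cdot\|_{E_{\leq 2}}$. For tier zero, I would test \eqref{eqphi} with $\phi(1+\psi^{1-2\mu})/u$; integration by parts in $\psi$ produces the coercive $\|\phi_{\psi}(1+\psi^{\frac12-\mu})\|_{L^{2}}^{2}$, while the reaction terms $A\phi^{2}/u$ and $\phi^{2}u_{X}/u^{2}$ combine (using $u=\bar{u}+\rho$ and the identity $-\bar{u}_{yy}+\bar{u}\bar{u}_{X}=0$ in von Mises coordinates, exactly as in \eqref{R}) into a cubic remainder $\int \phi^{2}\mathcal{R}/u^{3}$ which is bounded by $\|\phi\|_{E_{\leq 2}}^{3}$ by Cauchy--Schwarz, Hardy's inequality (Lemma \ref{sobolev}), and Lemma \ref{Lm3}. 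For tier one, I would test against $\phi_{\psi}(1+\psi^{1-2\mu})(X+1)^{1-2\theta}$ (or its $X$-derivative analogue): the temporal weight forces a harmless $(X+1)^{-2\theta}$-power borrow from tier zero, absorbed since $\theta=\frac{1}{1000}$ is small. For tier two, differentiate \eqref{eqphi} in $X$ to obtain
\begin{equation*}
(\phi_{X})_{X}-u(\phi_{X})_{\psi\psi}+A\phi_{X}=u_{X}\phi_{\psi\psi}-A_{X}\phi,
\end{equation*}
then test with $\phi_{X}(1+\psi^{1-2\mu})(X+1)^{2-2\theta}/u$; the dangerous right-hand side $u_{X}\phi_{\psi\psi}$ is rewritten via the equation as $(u_{X}/u)(\phi_{X}+A\phi)$, so only quantities already controlled by $E_{1}\cup E_{2}$ appear.

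Combining the three tiers with Lemma \ref{initial} gives the polynomial bootstrap inequality
\begin{equation*}
\|\phi\|_{E_{\leq 2}}^{2}\lesssim \|\phi_{0}\|_{E_{\mathrm{initial}}}^{2}+\|\phi\|_{E_{\leq 2}}^{3}\lesssim \kappa^{2}+\|\phi\|_{E_{\leq 2}}^{3},
\end{equation*}
from which a standard continuity argument yields $\|\phi\|_{E_{\leq 2}}\lesssim \kappa$ uniformly on $[0,X_{*})$ as soon as $\kappa$ is small enough, and the local solution extends to all $X\geq 0$.

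The main obstacle is closing the top tier without losing regularity: the forcing term $u_{X}\phi_{\psi\psi}$ arising when we differentiate in $X$ is not directly controlled by $E_{\leq 2}$, so the substitution via the equation described above is essential, and it must be executed while keeping careful track of the degeneracy of $u$ near $\psi=0$ (handled by Lemma \ref{Lm3}) and of the cross-interaction with the weight $1+\psi^{1-2\mu}$ (handled by the two versions of Hardy's inequality in Lemma \ref{sobolev}, taking advantage of the boundary value $\phi(X,0)=0$). A secondary delicate point is that every cubic remainder must distribute its three $\phi$-factors so that at most one falls on the highest-regularity object $\phi_{X\psi}$, which is what forces the specific powers $\frac12-\mu$ and $\frac12-\theta$ in the definition of $E_{\leq 2}$.
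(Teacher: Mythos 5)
Your architecture matches the paper's proof almost exactly: a three-tiered weighted energy scheme (test with a weighted $\phi/u$, then a weighted $\phi_X/u$, then differentiate in $X$ and test again), the substitution $u_X\phi_{\psi\psi}=\frac{u_X}{u}(\phi_X+A\phi)$ via the equation, the use of $\mathcal{R}_1+\mathcal{R}_2+\mathcal{R}_3$ to exploit $-\bar{u}_{yy}+\bar{u}\bar{u}_X=0$, and a bootstrap fed by Lemma~\ref{initial}. Two points are worth correcting, though, because as stated they would break the argument.

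First, your tier-one multiplier should be $\displaystyle\phi_X\,\frac{1}{u}\,(1+\psi^{1-2\mu})(X+1)^{1-2\theta}$, not $\phi_\psi(1+\psi^{1-2\mu})(X+1)^{1-2\theta}$. Testing \eqref{eqphi} with a $\phi_\psi$-type multiplier produces $-\int u\,\phi_{\psi\psi}\phi_\psi=\tfrac12\int u_\psi\phi_\psi^2$, which is sign-indefinite, and $\int\phi_X\phi_\psi$ is not a time derivative; neither component of $E_1$ appears. Multiplying by $\phi_X/u$ instead produces $\int|\phi_X|^2/u$ as the dissipation and $\int\phi_\psi\phi_{X\psi}=\tfrac12\partial_X\int|\phi_\psi|^2$ as the energy, which is exactly how both pieces of $E_1$ arise in the paper. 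Second, the resulting system is not a pure cubic inequality $\|\phi\|_{E_{\leq 2}}^2\lesssim\kappa^2+\|\phi\|_{E_{\leq 2}}^3$: the tier-one estimate inevitably yields a term of the form $\mathcal{C}_2(\iota,\mu)\|\phi\|_{E_0}^2+\iota\|\phi\|_{E_2}^2$ (from Young/Hardy on the cross term $\int\phi_X\phi_\psi\psi^{-2\mu}$), and the tier-two estimate yields a $\|\phi\|_{E_1}^2$ term. These are genuine quadratic cross-tier terms, not small nonlinearities, and they are \emph{not} absorbed because $\theta$ is small; they are handled by choosing the Young parameter $\iota$ small relative to the universal constant in the $E_2$ inequality, and then substituting the tier inequalities sequentially ($E_0\to E_1\to E_2$). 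Only after this sequential substitution does the system collapse to a closable smallness bound $\|\phi\|_{E_{\leq 2}}\lesssim\kappa$. With these two corrections your outline coincides with the paper's Lemma~\ref{lm61}.
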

 \begin{proof}In the proof, we sometimes adopt the following simplified notation
 \begin{equation*}
 \int \cdots = \int_{0}^{\infty}\cdots d\psi.
 \end{equation*}
 We organize the proof of Lemma \ref{lm61} into four steps.

\textit{Step 1. Estimation of $ E_{0} $.}
By testing equation \eqref{eqphi} with
\begin{equation*}
\phi\frac{1}{u}(1+\psi^{(1-2\mu)}),
\end{equation*}
and integrating by parts, we derive the following energy inequality:
\begin{equation}\label{E01}
\begin{aligned}
&\frac{\p_{x}}{2}\left(\int |\phi|^{2}\frac{1}{u}(1+\psi^{(1-2\mu)})d\psi\right)+\int |\phi_{\psi}|^{2}(1+\psi^{(1-2\mu)})d\psi\\
&\quad\quad +\int |\phi|^{2}(1+\psi^{(1-2\mu)})\left[\frac{A}{u}+\frac{u_{X}}{2u^{2}}\right]d\psi\leq 0.
\end{aligned}
\end{equation}
As demonstrated in the proof of Lemma \ref{weight},
\begin{equation}\label{E02}
\int  |\phi|^{2}(1+\psi^{(1-2\mu)})\left[\frac{A}{u}+\frac{u_{X}}{2u^{2}}\right]d\psi\geq \int |\phi|^{2}(1+\psi^{(1-2\mu)})\frac{1}{u^{3}}\left(\mathcal{R}_{1}+\mathcal{R}_{2}+\mathcal{R}_{3}\right).
\end{equation}
Recall the definition of $\mathcal{R}_{1},\mathcal{R}_{2},\mathcal{R}_{3}$ from \eqref{R}, the most critical term in \eqref{E02} is
\begin{equation}\label{E03}
\int |\phi|^{2}(1+\psi^{(1-2\mu)})\frac{1}{u^{2}}\rho_{X} d\psi,
\end{equation}
as \eqref{E03} involves the highest derivations of $ \phi $. Utilizing \eqref{rho} and Lemma \ref{Blasius}, we have
\begin{equation}\label{E04}
|\eqref{E03}|\lesssim \int |\phi|^{2}|\phi_{X}|\frac{1}{u^{3}}(1+\psi^{(1-2\mu)})+(X+1)^{-1}\int |\phi|^{3}\frac{1}{u^{3}}(1+\psi^{(1-2\mu)}).
\end{equation}
We concentrate on the first term of \eqref{E04} since it involves a first-order derivation. On the one hand , for $ \psi\geq \sqrt{X+1} $, we have
\begin{equation}\label{E05}
\begin{aligned}
\int_{\psi \geq \sqrt{X+1}} |\phi|^{2}|\phi_{X}|\frac{1}{u^{3}}(1+\psi^{(1-2\mu)})&\lesssim \int |\phi|^{2}|\phi_{X}|(1+\psi^{(1-2\mu)})\\
&\leq \left\|\phi (1+\psi^{(\frac{1}{2}-\mu)})\right\|_{L^{2}_{\psi}}\left\|\phi_{X}(1+\psi^{(\frac{1}{2}-\mu)})\right\|_{L^{2}_{\psi}}\|\phi\|_{L^{\infty}_{\psi}}\\
&\lesssim (X+1)^{-(\frac{5}{4}-\frac{3}{2}\theta)}\|\phi\|_{E_{0}}^{\frac{3}{2}}\|\phi\|_{E_{1}}^{\frac{1}{2}}\|\phi\|_{E_{2}}.
\end{aligned}
\end{equation}
On the other hand, for $ \psi \leq \sqrt{X+1} $, we have
\begin{equation}\label{E06}
\begin{aligned}
&\int _{\psi\leq \sqrt{X+1}}|\phi|^{2}|\phi_{X}|\frac{1}{u^{3}}(1+\psi^{(1-2\mu)})\\
&\leq \left(\int |\phi|^{2}\frac{1}{u}(1+\psi^{(1-2\mu)})\right)^{\frac{1}{2}} \left(\int |\phi_{X}|^{2}\frac{1}{u}(1+\psi^{(1-2\mu)})\right)^{\frac{1}{2}}\left\|\frac{\phi}{u^{2}}\right\|_{L^{\infty}_{\psi\leq\sqrt{X+1}}}\\
&\lesssim  \left(\int |\phi|^{2}\frac{1}{u}(1+\psi^{(1-2\mu)})\right)^{\frac{1}{2}} \left(\int |\phi_{X}|^{2}\frac{1}{u}(1+\psi^{(1-2\mu)})\right)^{\frac{1}{2}}\|\phi_{\psi}\|_{L^{\infty}_{\psi}}(X+1)^{\frac{1}{2}}.
\end{aligned}
\end{equation}
We estimate the quantity $ \|\phi_{\psi}\|_{L^{\infty}_{\psi}} $ via
\begin{equation}\label{E066}
\begin{aligned}
\|\phi_{\psi}\|_{L^{\infty}_{\psi}}^{2}&\lesssim \|\phi_{\psi}\|_{L^{2}_{\psi}}\|\phi_{\psi\psi}\|_{L^{2}_{\psi}}\\
&\leq \|\phi_{\psi}\|_{L^{2}_{\psi}}\bigg(\left\|\frac{\phi_{X}}{u}\right\|_{L^{2}_{\psi}}+\left\|\frac{A\phi}{u}\right\|_{L^{2}_{\psi}}\bigg)\\
&\lesssim \|\phi\|_{E_{1}}\left[(X+1)^{-(\frac{3}{2}-2\theta)}\|\phi\|_{E_{\leq 2}}+(X+1)^{\theta}\left\|\phi_{X\psi}\right\|_{L^{2}_{\psi}}\right].
\end{aligned}
\end{equation}
By combining \eqref{E01}, \eqref{E05}, \eqref{E06}, \eqref{E066}, and applying the H\"older's inequality in time, we conclude that 
\begin{equation}\label{E07}
\|\phi\|_{E_{0}}^{2}\leq \|\phi_{0}\|_{E_{0,0}}^{2}+\mathcal{C}_{1}\left(\|\phi\|_{E_{0}}^{\frac{3}{2}}\|\phi\|_{E_{1}}^{\frac{1}{2}}\|\phi\|_{E_{2}}+\|\phi\|_{E_{1}}^{2}\|\phi\|_{E_{2}}\right).
\end{equation}

\textit{Step 2. Estimation of $ E_{1} $.} First, by testing \eqref{eqphi} with
\begin{equation*}
\phi_{X}\frac{1}{u}(1+\psi^{(1-2\mu)})(1+X)^{(1-2\theta)},
\end{equation*}
we derive the following energy inequality:
\begin{equation}\label{E11}
\begin{aligned}
&\int |\phi_{X}|^{2}\frac{1}{u}(1+\psi^{(1-2\mu)})d\psi (1+X)^{(1-2\theta)}-\int \phi_{\psi\psi}\phi_{X}(1+\psi^{(1-2\mu)})d\psi(1+X)^{(1-2\theta)}\\
&\quad \quad +\int A\phi \phi_{X}\frac{1}{u}(1+\psi^{(1-2\mu)})d\psi (1+X)^{(1-2\theta)}=0.
\end{aligned}
\end{equation}
The third term of \eqref{E11} can be bounded as follows:
\begin{equation}\label{E12}
\begin{aligned}
&\int A\phi \phi_{X}\frac{1}{u}(1+\psi^{(1-2\mu)})d\psi (1+X)^{(1-2\theta)}\\
&\quad \leq \frac{1}{2} \int |\phi_{X}|^{2}\frac{1}{u}(1+\psi^{(1-2\mu)})d\psi (1+X)^{(1-2\theta)}\\
&\quad\quad+\frac{1}{2}\int |A|^{2}|\phi|^{2}\frac{1}{u}(1+\psi^{(1-2\mu)})d\psi (1+X)^{(1-2\theta)}\\
&\quad \leq \frac{1}{2} \int |\phi_{X}|^{2}\frac{1}{u}(1+\psi^{(1-2\mu)})d\psi (1+X)^{(1-2\theta)}+(X+1)^{-(1+2\theta)}\|\phi\|_{E_{0}}^{2}.
\end{aligned}
\end{equation}
Regarding the second term of \eqref{E11}, integrating by parts, we find
\begin{equation}\label{E13}
\begin{aligned}
\eqref{E11}_{2}&=\int \phi_{\psi}\phi_{X \psi}(1+\psi^{(1-2\mu)})d\psi (1+X)^{(1-2\theta)}\\
&\quad +(1-2\mu)\int \phi_{X}\phi_{\psi}\psi^{-2\mu}d\psi(1+X)^{(1-2\theta)}\\
&=\frac{\partial_{X}}{2}\left(\int |\phi_{\psi}|^{2}(1+\psi^{(1-2\mu)})d\psi (1+X)^{(1-2\theta)}\right)\\
&\quad -\frac{1-2\theta}{2}\int |\phi_{\psi}|^{2}(1+\psi^{(1-2\mu)})d\psi (X+1)^{-2\theta}\\
&\quad+(1-2\mu)\int \phi_{X}\phi_{\psi}\psi^{-2\mu}d\psi (1+X)^{(1-2\theta)}\\
\end{aligned}
\end{equation}
Applying Hardy's inequality, we get
\begin{equation}\label{E14}
\begin{aligned}
\int \phi_{X}\phi_{\psi}\psi^{-2\mu}d\psi (1+X)^{(1-2\theta)}&\leq \left\|\phi_{\psi}\psi^{(\frac{1}{2}-\mu)}\right\|_{L^{2}_{\psi}}\left\|\phi_{X}\psi^{-(\frac{1}{2}+\mu)}(X+1)^{(1-2\theta)}\right\|_{L^{2}_{\psi}}\\
&\lesssim_{\mu} \left\|\phi_{\psi}\psi^{(\frac{1}{2}-\mu)}\right\|_{L^{2}_{\psi}}\left\|\phi_{X\psi}\psi^{(\frac{1}{2}-\mu)}(X+1)^{(1-2\theta)}\right\|_{L^{2}_{\psi}}\\
&\leq \mathcal{C}_{2}(\iota,\mu)\left\|\phi_{\psi}\psi^{(\frac{1}{2}-\mu)}\right\|_{L^{2}_{\psi}}^{2}\\
&\quad +\iota \left\|\phi_{X\psi}\psi^{(\frac{1}{2}-\mu)}(X+1)^{(1-2\theta)}\right\|_{L^{2}_{\psi}}^{2},
\end{aligned}
\end{equation}
where $\iota\in(0,1)$ is a small number which will be chosen later. Combining \eqref{E11}, \eqref{E12}, \eqref{E13}, \eqref{E14}, we obtain the energy estimation for $ E_{1} $:
\begin{equation}\label{E15}
\|\phi\|_{E_{1}}^{2}\leq \|\phi_{0}\|_{E_{1,0}}^{2}+ \mathcal{C}_{2}(\iota,\mu)\|\phi\|_{E_{0}}^{2}+\iota \|\phi\|_{E_{2}}^{2}.
\end{equation}

\textit{Step 3.  Estimation of $ E_{2} $.}
We now proceed to estimate the higher-order energy $ E_{2} $. It is noted that $ \phi_{X} $ satisfies the following equation:
\begin{equation}\label{E21}
\partial_{X}\phi_{X} - u\partial_{\psi}^{2}\phi_{X} + A\phi_{X} - u_{X}\phi_{\psi\psi} + A_{X}\phi = 0.
\end{equation}
By testing \eqref{E21} with
\begin{equation*}
\phi_{X}\frac{1}{u}(1+\psi^{(1-2\mu)}) (X+1)^{(2-2\theta)},
\end{equation*}
we obtain the following energy balance:
\begin{equation}\label{E22}
\begin{aligned}
0 & = \int \Big(\partial_{X}\phi_{X} - u\partial_{\psi\psi}\phi_{X} + A\phi_{X} - u_{X}\phi_{\psi\psi} + A_{X}\phi\Big)\\
&\quad\quad\quad\times\Big(\phi_{X}\frac{1}{u}(1+\psi^{(1-2\mu)}(X+1)^{(2-2\theta)}\Big) d\psi \\\
&=:I_{1} + I_{2} + I_{3} + I_{4} + I_{5}.
\end{aligned}
\end{equation}
Where
\begin{equation}
\begin{cases}
I_{1}= \displaystyle\int \phi_{XX}\phi_{X}\frac{1}{u}(1+\psi^{(1-2\mu)})d\psi (X+1)^{(2-2\theta)}\\
I_{2}=- \displaystyle\int \phi_{X\psi\psi}\phi_{X}(1+\psi^{(1-2\mu)})d\psi (X+1)^{(2-2\theta)}\\
I_{3}=\displaystyle\int A|\phi_{X}|^{2}\frac{1}{u}(1+\psi^{(1-2\mu)})d\psi (X+1)^{(2-2\theta)}\\
I_{4}=\displaystyle- \int u_{X}\phi_{\psi\psi}\phi_{X}\frac{1}{u}(1+\psi^{(1-2\mu)})d\psi (X+1)^{(2-2\theta)}\\
I_{5}=\displaystyle\int A_{X}\phi\phi_{X}\frac{1}{u}(1+\psi^{(1-2\mu)})d\psi (X+1)^{(2-2\theta)}
\end{cases}
\end{equation}

\textit{Estimate of $I_{1}$.}
By direct computation:
\begin{equation}\label{E23}
\begin{aligned}
I_{1} &= \int \phi_{XX}\phi_{X}\frac{1}{u}(1+\psi^{(1-2\mu)})d\psi (X+1)^{(2-2\theta)}\\
&= \frac{\partial_{X}}{2}\left(\int |\phi_{X}|^{2}\frac{1}{u}(1+\psi^{(1-2\mu)})d\psi(X+1)^{(2-2\theta)}\right)\\
&\quad + \int |\phi_{X}|^{2}\frac{u_{X}}{2u^{2}}(1+\psi^{(1-2\mu)})d\psi(X+1)^{(2-2\theta)}\\
&\quad - (1-\theta)\int |\phi_{X}|^{2}\frac{1}{u}(1+\psi^{(1-2\mu)})d\psi(X+1)^{(1-2\theta)}.
\end{aligned}
\end{equation}

\textit{Estimate of $I_{2}$.}
For the second term $ I_{2} $, we perform integration by parts twice:
\begin{equation}\label{E24}
\begin{aligned}
I_{2} &= -\int \phi_{X\psi\psi}\phi_{X}(1+\psi^{(1-2\mu)})d\psi (X+1)^{(2-2\theta)}\\
&= \int |\phi_{X\psi}|^{2}(1+\psi^{(1-2\mu)})d\psi (X+1)^{(2-2\theta)} \\
&\quad + (1-2\mu)\int \phi_{X\psi}\phi_{X}\psi^{-2\mu}d\psi (X+1)^{(2-2\theta)}\\
&\geq \int |\phi_{X\psi}|^{2}(1+\psi^{(1-2\mu)})d\psi (X+1)^{(2-2\theta)}.
\end{aligned}
\end{equation}

\textit{Estimate of $I_{3}$.}
As demonstrated in the previous proof, the estimation of $ I_{3} $ should be combined with the second term of \eqref{E23}:
\begin{equation}\label{E25}
\begin{aligned}
I_{3} + \eqref{E23}_{2} &= \int |\phi_{X}|^{2}\left[\frac{A}{u} + \frac{u_{X}}{2u^{2}}\right](1 + \psi^{(1-2\mu)})d\psi (X+1)^{(2-2\theta)}\\
&\geq \int |\phi_{X}|^{2}\frac{1}{u^{3}}\left(\mathcal{R}_{1} + \mathcal{R}_{2} + \mathcal{R}_{3}\right)(1 + \psi^{(1-2\mu)})d\psi (X+1)^{(2-2\theta)}.
\end{aligned}
\end{equation}

As explained in the first step, we focus solely on the most critical term:
\begin{equation}\label{E26}
\int |\phi_{X}|^{3}\frac{1}{u^{3}}(1 + \psi^{(1-2\mu)})d\psi (X+1)^{(2-2\theta)}.
\end{equation}

In the interval $ \psi \geq \sqrt{X+1} $:
\begin{equation}\label{E27}
\begin{aligned}
&\int_{\psi \geq \sqrt{X+1}} |\phi_{X}|^{3}\frac{1}{u^{3}}(1 + \psi^{(1-2\mu)})d\psi (X+1)^{(2-2\theta)} \\
&\quad\quad\lesssim \int |\phi_{X}|^{3}(1 + \psi^{(1-2\mu)})d\psi (X+1)^{(2-2\theta)}\\
&\quad\quad\leq \|\phi_{X}\|_{L^{\infty}_{\psi}}\|\phi\|_{E_{2}}^{2} \lesssim \|\phi_{X}\|_{L^{2}_{\psi}}^{\frac{1}{2}}\|\phi_{X\psi}\|_{L^{2}_{\psi}}^{\frac{1}{2}}\|\phi\|_{E_{2}}^{2}\\
&\quad\quad\lesssim \|\phi\|_{E_{2}}^{\frac{5}{2}}\|\phi_{X\psi}(X+1)^{(1-\theta)}\|_{L^{2}_{\psi}}^{\frac{1}{2}}(X+1)^{\theta-1}.
\end{aligned}
\end{equation}
In the interval $ \psi\leq \sqrt{X+1} $, by applying the Cauchy-Schwarz inequality and Hardy's inequality, we can derive the following inequality:
\begin{equation}\label{E28}
\begin{aligned}
&\int_{\psi\leq \sqrt{X+1}}|\phi_{X}|^{3}\frac{1}{u^{3}}(1+\psi^{(1-2\mu)})d\psi (X+1)^{(2-2\theta)}\\
&\quad \lesssim \int_{0}^{\sqrt{X+1}}|\phi_{X}|^{3}\left\{\frac{1}{\psi^{\frac{3}{2}}}+\frac{1}{\psi^{\frac{3}{5}}}\right\}d\psi (X+1)^{(\frac{14}{5}-2\theta)}\\
&\quad \leq (X+1)^{(\frac{14}{5}-2\theta)}\left(\int_{0}^{\sqrt{X+1}}|\phi_{X}|^{2}\left\{\frac{1}{\psi^{2}}+\frac{1}{\psi^{\frac{11}{10}}}\right\}d\psi\right)^{\frac{1}{2}}\\
&\quad\quad\quad\quad\times \left(\int_{0}^{\sqrt{X+1}}|\phi_{X}|^{4}\left\{\frac{1}{\psi}+\frac{1}{\psi^{\frac{1}{10}}}\right\}d\psi\right)^{\frac{1}{2}}\\
&\quad \lesssim (X+1)^{(\frac{14}{5}-2\theta)}\|\phi_{X}(1+\psi^{\frac{1}{2}-\mu})\|_{L^{2}_{\psi}}\|\phi_{X\psi}(1+\psi^{\frac{1}{2}-\mu})\|_{L^{2}_{\psi}}^{2}.
\end{aligned}
\end{equation}
The last inequality is due to
\begin{equation*}
\begin{aligned}
\int |\phi_{X}|^{4}\left\{\frac{1}{\psi}+\frac{1}{\psi^{\frac{1}{10}}}\right\}&\leq \left\|\frac{\phi_{X}}{\psi^{\frac{1}{2}}}\right\|_{L^{\infty}_{\psi}}^{2}\int |\phi_{X}|^{2}\left(1+\psi^{\frac{9}{10}}\right)\\
&\leq \|\phi_{X\psi}\|_{L^{2}_{\psi}}^{2}\|\phi_{X}(1+\psi^{\frac{1}{2}-\mu})\|_{L^{2}_{\psi}}^{2}.
\end{aligned}
\end{equation*}

\textit{Estimate of $I_{4}$.}
Next, we examine the fourth term $ I_{4} $:
\begin{equation}\label{E29}
\begin{aligned}
I_{4} &= -\int \bar{u}_{X}\phi_{\psi\psi}\phi_{X}\frac{1}{u}(1+\psi^{(1-2\mu)})d\psi (X+1)^{(2-2\theta)}\\
&\quad -\int \rho_{X}\phi_{\psi\psi}\phi_{X}\frac{1}{u}(1+\psi^{(1-2\mu)})d\psi (X+1)^{(2-2\theta)}\\
&= I_{41} + I_{42}.
\end{aligned}
\end{equation}
Given that $ \bar{u}_{X} \leq 0 $, we can express $I_{41}$ as:
\begin{equation}\label{E210}
\begin{aligned}
I_{41} &= -\int \frac{\bar{u}_{X}}{u}\left(\phi_{X}+A\phi\right)\phi_{X}\frac{1}{u}(1+\psi^{(1-2\mu)})d\psi (X+1)^{(2-2\theta)}\\
&\geq -\int \frac{\bar{u}_{X}}{u}\frac{A\phi\phi_{X}}{u}(1+\psi^{(1-2\mu)})d\psi (X+1)^{(2-2\theta)}.
\end{aligned}
\end{equation}
By applying the Cauchy-Schwarz inequality and the properties of the Blasius profile, we obtain:
\begin{equation}\label{E211}
\begin{aligned}
&\left|\int \frac{\bar{u}_{X}}{u}\frac{A\phi\phi_{X}}{u}(1+\psi^{(1-2\mu)})d\psi (X+1)^{(2-2\theta)}\right|\\
&\lesssim (X+1)^{-2\theta}\int |\phi||\phi_{X}|\frac{1}{u}(1+\psi^{(1-2\mu)})d\psi\\
&\leq (X+1)^{-(1+\theta)}\left(\int |\phi|^{2}\frac{1}{u}(1+\psi^{(1-2\mu)})d\psi\right)^{\frac{1}{2}}\\
&\quad\quad\quad\times\left(\int |\phi_{X}|^{2}\frac{1}{u}(1+\psi^{(1-2\mu)})d\psi(X+1)^{(2-2\theta)}\right)^{\frac{1}{2}}\\
&\leq (X+1)^{-(1+\theta)}\|\phi\|_{E_{0}}\|\phi\|_{E_{2}}.
\end{aligned}
\end{equation}
The estimation of $I_{42}$ follows a similar approach to that of \eqref{E26}; therefore, we omit it.

\textit{Estimate of $I_{5}$.}
Finally, we proceed to estimate the last term $I_{5}$. We express this term as follows:
\begin{equation}\label{E212}
\begin{aligned}
|I_{5}| &= \left|\int A_{X}\phi\phi_{X}\frac{1}{u}(1+\psi^{(1-2\mu)})d\psi(X+1)^{(2-2\theta)}\right|\\
&\leq \left\| u^{\frac{3}{2}}A_{X}(1+\psi^{(\frac{1}{2}-\mu)})(X+1)^{\frac{7}{4}}\right\|_{L^{2}_{\psi}} \left\| \phi\phi_{X}\frac{1}{u^{\frac{5}{2}}}(1+\psi^{(\frac{1}{2}-\mu)})(X+1)^{\frac{1}{4}-2\theta}\right\|_{L^{2}_{\psi}}\\
&\lesssim \left(1+\|\phi\|_{E_{0}}+\|\phi\|_{E_{1}}+\|\phi\|_{E_{2}}\right) \left\| \phi\phi_{X}\frac{1}{u^{\frac{5}{2}}}(1+\psi^{(\frac{1}{2}-\mu)})(X+1)^{\frac{1}{4}-2\theta}\right\|_{L^{2}_{\psi\geq\sqrt{X+1}}}\\
&\quad\quad + \left(1+\|\phi\|_{E_{0}}+\|\phi\|_{E_{1}}+\|\phi\|_{E_{2}}\right) \left\| \phi\phi_{X}\frac{1}{u^{\frac{5}{2}}}(1+\psi^{(\frac{1}{2}-\mu)})(X+1)^{\frac{1}{4}-2\theta}\right\|_{L^{2}_{\psi\leq\sqrt{X+1}}}\\
&= I_{51} + I_{52}.
\end{aligned}
\end{equation}
Where we have utilized the weighted embedding inequality proved in  \cite{MR4097332}:
\begin{equation}\label{techweight}
\left\| u^{\frac{3}{2}}A_{X}(1+\psi^{(\frac{1}{2}-\mu)})(X+1)^{\frac{7}{4}}\right\|_{L^{2}_{\psi}}\lesssim \left(1+\|\phi\|_{E_{0}}+\|\phi\|_{E_{1}}+\|\phi\|_{E_{2}}\right).
\end{equation}
Given that $I_{52}$ is more intricate than $I_{51}$ due to the involvement of a singular weight, our focus shifts towards analyzing $I_{52}$. By applying Hardy's inequality and the interpolation inequality, we arrive at the following expression:

\begin{equation}\label{E213}
\begin{aligned}
&\left\| \phi\phi_{X}\frac{1}{u^{\frac{5}{2}}}(1+\psi^{(\frac{1}{2}-\mu)})(X+1)^{\frac{1}{4}-2\theta}\right\|_{L^{2}_{\psi\leq\sqrt{X+1}}}\\
&\quad\lesssim \left\| \phi\phi_{X}\frac{1}{\psi^{\frac{5}{4}}}(X+1)^{\frac{7}{8}-2\theta}\right\|_{L^{2}_{\psi\leq\sqrt{X+1}}}+\left\| \phi\phi_{X}\frac{1}{\psi^{\frac{3}{4}}}(X+1)^{\frac{7}{8}-2\theta}\right\|_{L^{2}_{\psi\leq\sqrt{X+1}}}\\
&\quad\leq \left(\|\phi\|_{\dot{C}^{\frac{1}{4}}_{\psi}}\left\|\frac{\phi_{X}}{\psi}\right\|_{L^{2}_{\psi}}+\|\phi\|_{\dot{C}^{\frac{3}{16}}}\left\|\frac{\phi_{X}}{\psi^{\frac{9}{16}}}\right\|_{L^{2}_{\psi}}\right)(X+1)^{\frac{7}{8}-2\theta}\\
&\quad\lesssim\|\phi(1+\psi^{\frac{1}{2}-\mu})\|_{L^{2}_{\psi}}^{\frac{1}{4}}\|\phi_{\psi}\|_{L^{2}_{\psi}}^{\frac{3}{4}}\|\phi_{X\psi}(1+\psi^{\frac{1}{2}-\mu})\|_{L^{2}_{\psi}}(X+1)^{\frac{7}{8}-2\theta}\\
&\quad \leq \|\phi\|_{E_{0}}^{\frac{1}{4}}\|\phi\|_{E_{1}}^{\frac{3}{4}}\|\phi_{X\psi}(1+\psi^{\frac{1}{2}-\mu})\|_{L^{2}_{\psi}}(X+1)^{(\frac{1}{2}-\frac{5}{4}\theta)}.
\end{aligned}
\end{equation}
Combining equations $\eqref{E21}-\eqref{E213}$ and applying Hölder's inequality in time, we deduce the following energy estimate for $E_{2}$:
\begin{equation}\label{E214}
\|\phi\|_{E_{2}}^{2} \leq \|\phi\|_{E_{2,0}}^{2} + \mathcal{C}_{3}\left(\|\phi\|_{E_{1}}^{2} + \|\phi\|_{E_{2}}^{3} + \|\phi\|_{E_{0}}\|\phi\|_{E_{2}} + \|\phi\|_{E_{0}}^{\frac{1}{4}}\|\phi\|_{E_{1}}^{\frac{3}{4}}\|\phi\|_{E_{2}}\right).
\end{equation}

\textit{Step 4: Bootstrap Arguments.}
The bootstrap hypothesis is
\begin{equation}\label{boot}
\begin{cases}
\|\phi\|_{E_{0}}\leq (F_{0}+1)\|\phi_{0}\|_{E_{\text{initial}}},\\
\|\phi\|_{E_{1}}\leq (F_{1}+1)\|\phi_{0}\|_{E_{\text{initial}}},\\
\|\phi\|_{E_{2}}\leq (F_{2}+1)\|\phi_{0}\|_{E_{\text{initial}}},
\end{cases}
\end{equation}
where $F_{i}=F_{i}(\theta,\mu,\iota)$.

Under bootstrap hypothesis \eqref{boot}, by suitably choosing the small parameter $\iota$ relative to universal constants, combining \eqref{E07}, \eqref{E15}, and \eqref{E214}, and applying Lemma \ref{initial} we obtain
\begin{equation}\label{boot1}
\begin{cases}
\|\phi\|_{E_{0}}\leq F_{0}\|\phi_{0}\|_{E_{\text{initial}}},\\
\|\phi\|_{E_{1}}\leq F_{1}\|\phi_{0}\|_{E_{\text{initial}}},\\
\|\phi\|_{E_{2}}\leq F_{2}\|\phi_{0}\|_{E_{\text{initial}}},
\end{cases}
\end{equation}
which is stronger than \eqref{boot}. Hence, along with a standard bootstrap argument, we establish the global existence of $\phi$ in the space $E_{\leq 2}$, with the bound in the space $E_{\leq 2}$:
\begin{equation}
\|\phi\|_{E_{\leq 2}}\lesssim \|\phi_{0}\|_{E_{\text{initial}}}.
\end{equation}
Hence, we finish the proof of Lemma \ref{lm61}.
\end{proof}
Now we are proceeding to derive the decay rates stated in Theorem \ref{iyer}. Notice that the global existence of $\phi$ in the function space $E_{\leq 2}$ alone does not suffice to establish the decay rates prescribed in Theorem \ref{iyer}. Before doing it, we state a quantitative estimate of $\|\phi_{\psi}\|_{L^{\infty}_{\psi}}$, which is finer than \eqref{E066}.
\begin{comment}
\begin{proof}[Step 4: Decay Rates]
The global existence of $\phi$ in the function space $E_{\leq 2}$ alone does not suffice to establish the decay rates prescribed in Theorem \eqref{iyer}. We now elucidate the methodology for deriving these essential decay rates.

It is crucial to discern that the quantity $\|F \psi^{\frac{1}{2}}\|_{L^{2}_{\psi}}$ captures lower-frequency components relative to $\|F\|_{L^{2}_{\psi}}$. This distinction enables the establishment of the following interpolation inequality:
\begin{equation}\label{E215}
\|F\|_{L^{2}_{\psi}}^{2} \lesssim \|F\psi^{\frac{1}{2}}\|_{L^{2}_{\psi}}^{\frac{4}{3}} \|F_{\psi}\|_{L^{2}_{\psi}}^{\frac{2}{3}}.
\end{equation}
By employing \eqref{E215} within the energy balance identity, we can deduce the decay rates specified in Theorem \ref{iyer}. The methodology closely mirrors that of Section \ref{main}; therefore, we shall abstain from delving into intricate details at this point.
\end{proof}
\end{comment}
\begin{lemma}\label{lm62}Assume $\phi$ is the global solution of \eqref{eqphi} in space $E_{\leq 2}$, then
\begin{equation}\label{decay1}
\begin{aligned}
\left\|\phi_{\psi}\right\|_{L^{\infty}_{\psi\leq\sqrt{X+1}}}^{2}&\lesssim (X+1)^{\frac{1}{2}}\left\|\frac{\phi_{X}}{\sqrt{u}}\right\|_{L^{2}_{\psi}}^{2}+\left\|\frac{\phi_{X}}{\sqrt{u}}\right\|_{L^{2}_{\psi}}\left\|\phi_{\psi}\right\|_{L^{2}_{\psi}}\\
&\quad +(X+1)^{-\frac{1}{2}}\left[\|\phi_{\psi}\|_{L^{2}_{\psi}}^{2}+\left\|\frac{\phi_{X}}{\sqrt{u}}\right\|_{L^{2}_{\psi}}\left\|\frac{\phi}{\sqrt{u}}\right\|_{L^{2}_{\psi}}\right]\\
&\quad +(X+1)^{-1}\left\|\frac{\phi}{\sqrt{u}}\right\|_{L^{2}_{\psi}}\|\phi_{\psi}\|_{L^{2}_{\psi}}+(X+1)^{-\frac{3}{2}}\left\|\frac{\phi}{\sqrt{u}}\right\|_{L^{2}_{\psi}}^{2}.
\end{aligned}
\end{equation}
\end{lemma}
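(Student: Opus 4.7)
The plan is to reduce this pointwise bound to integrated quantities by an averaging identity, then rewrite $\phi_{\psi\psi}$ via the equation \eqref{eqphi} as $(\phi_X+A\phi)/u$, and finally close everything by Young's inequality. Fix $\psi_0\in[0,\sqrt{X+1}]$. For any $\psi_*\in[\sqrt{X+1},2\sqrt{X+1}]$ the fundamental theorem of calculus gives
\begin{equation*}
|\phi_\psi(X,\psi_0)|^2 \leq |\phi_\psi(X,\psi_*)|^2+2\int_{\psi_0}^{\psi_*}|\phi_\psi \phi_{\psi\psi}|\,d\psi .
\end{equation*}
Averaging in $\psi_*$ over this interval of length $\sqrt{X+1}$ bounds the first term by $(X+1)^{-1/2}\|\phi_\psi\|_{L^2_\psi}^2$, which is exactly the third term on the right side of \eqref{decay1}.

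The remaining integral $2\int_0^{2\sqrt{X+1}}|\phi_\psi\phi_{\psi\psi}|\,d\psi$ is handled by substituting $\phi_{\psi\psi}=(\phi_X+A\phi)/u$, invoking $|A|\lesssim(X+1)^{-1}$ from Lemma \ref{Blasius}, and applying Cauchy--Schwarz with the symmetric splitting $1/u=1/\sqrt{u}\cdot 1/\sqrt{u}$:
\begin{equation*}
\int_0^{2\sqrt{X+1}}|\phi_\psi\phi_{\psi\psi}|\,d\psi\lesssim \Big(\|\phi_X/\sqrt{u}\|_{L^2_\psi}+(X+1)^{-1}\|\phi/\sqrt{u}\|_{L^2_\psi}\Big)\,\mathcal{T},
\end{equation*}
where $\mathcal{T}:=\|\phi_\psi/\sqrt{u}\|_{L^2(0,2\sqrt{X+1})}$.

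The main obstacle is controlling $\mathcal{T}$: a direct Cauchy--Schwarz of $\phi_\psi\cdot 1/\sqrt{u}$ against a constant would require the divergent integral $\int_0^{\sqrt{X+1}}\psi^{-1}\,d\psi$. The idea is to split the domain. On $[\sqrt{X+1},2\sqrt{X+1}]$, Lemma \ref{Lm3} gives $u\sim 1$, contributing at most $\|\phi_\psi\|_{L^2_\psi}$. On $[0,\sqrt{X+1}]$, the same lemma yields $u\sim\psi^{1/2}/(X+1)^{1/4}$, so bounding $\phi_\psi^2$ pointwise by $\|\phi_\psi\|_{L^\infty_{\psi\leq\sqrt{X+1}}}^2$ and using $\int_0^{\sqrt{X+1}}\psi^{-1/2}\,d\psi\sim(X+1)^{1/4}$ gives $\int_0^{\sqrt{X+1}}\phi_\psi^2/u\,d\psi\lesssim(X+1)^{1/2}\|\phi_\psi\|_{L^\infty_{\psi\leq\sqrt{X+1}}}^2$. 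Altogether $\mathcal{T}\lesssim\|\phi_\psi\|_{L^2_\psi}+(X+1)^{1/4}\|\phi_\psi\|_{L^\infty_{\psi\leq\sqrt{X+1}}}$.

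Plugging this back, the $\|\phi_\psi\|_{L^2_\psi}$ component of $\mathcal{T}$ immediately yields the second and fifth terms of \eqref{decay1}. The $(X+1)^{1/4}\|\phi_\psi\|_{L^\infty_{\psi\leq\sqrt{X+1}}}$ component produces, via Young's inequality, a small multiple of $\|\phi_\psi\|_{L^\infty_{\psi\leq\sqrt{X+1}}}^2$ which is absorbed into the left-hand side, plus the first term $(X+1)^{1/2}\|\phi_X/\sqrt{u}\|_{L^2_\psi}^2$ and the sixth term $(X+1)^{-3/2}\|\phi/\sqrt{u}\|_{L^2_\psi}^2$. Finally, the fourth term $(X+1)^{-1/2}\|\phi_X/\sqrt{u}\|\|\phi/\sqrt{u}\|$ is automatic from the weighted AM--GM $ab\leq\tfrac12[(X+1)a^2/(X+1)^{1/2}+(X+1)^{-1}b^2(X+1)^{-1/2}]$ on terms 1 and 6, so its presence in \eqref{decay1} is redundant and costs nothing.
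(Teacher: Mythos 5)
Your proof is correct and proceeds by a genuinely different route from the paper's. Both arguments hinge on the same two structural inputs: rewriting $\phi_{\psi\psi}=(\phi_X+A\phi)/u$ via the equation \eqref{eqphi}, and invoking Lemma \ref{Lm3} for the pointwise lower bound on $u$. The paper introduces a smooth cutoff $\chi_X(\psi)=\chi(\psi/\sqrt{X+1})$, writes $\phi_\psi^2\chi_X^2$ as an integral from $\infty$, applies Cauchy--Schwarz with the split weights $\psi^{-1/4}\cdot\psi^{1/4}$, and then closes via a Hardy-type inequality \eqref{decay2} to control the resulting $\|\phi_\psi\psi^{-1/4}\chi_X\|_{L^2}$. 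You instead use the averaging-in-$\psi_*$ trick to dispose of the boundary term, split the $1/u$ weight symmetrically as $u^{-1/2}\cdot u^{-1/2}$, and close by absorbing an $\varepsilon\|\phi_\psi\|_{L^\infty_{\psi\leq\sqrt{X+1}}}^2$ term via Young's inequality. Your route avoids Hardy's inequality entirely and is arguably more elementary; the trade-off is that your argument is self-referencing and requires the qualitative a priori finiteness of $\|\phi_\psi\|_{L^\infty_{\psi\leq\sqrt{X+1}}}$ at each fixed $X$ before absorption can be performed -- this is justified by interior regularity of Oleinik's solution (cf.\ Theorem \ref{oleinik} and \cite{MR4327905}), but you should make it explicit. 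You also correctly observe that term four in \eqref{decay1} never appears in your argument and is redundant by AM--GM applied to terms one and six; your bound is thus a clean strengthening, consistent with the stated estimate.
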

\begin{proof}Assume $ \chi(\psi):\mathbb{R}^{+}\to [0,1] $ is the smooth cut-off function with
\begin{equation*}
\chi(\psi)=
\begin{cases}
1,\quad \psi\leq 1;\\
0,\quad \psi\geq 2.
\end{cases}
\end{equation*}
We also define
\begin{equation*}
\chi_{X}(\psi)=\chi\left(\frac{\psi}{\sqrt{X+1}}\right).
\end{equation*}
We then have
\begin{equation*}
\begin{aligned}
\phi_{\psi}^{2}(\psi)\chi_{X}^{2}(\psi)&=2\int_{\infty}^{\psi}\phi_{\psi}\phi_{\psi\psi}\chi_{X}^{2}d\psi+ \int_{\infty}^{\psi}\phi_{\psi}^{2}\p_{\psi}\left(\chi_{X}^{2}\right)d\psi=N_{1}+N_{2}.
\end{aligned}
\end{equation*}
By the definition of $\chi_{X}(\psi)$,
\begin{equation*}
|N_{2}|\lesssim (X+1)^{-\frac{1}{2}}\|\phi_{\psi}\|_{L^{2}_{\psi}}^{2}.
\end{equation*}
For the first term, applying Cauchy-Schwarz inequality and Lemma \ref{Lm3},
\begin{equation*}
\begin{aligned}
|N_{1}|&\leq 2\left\|\phi_{\psi}\psi^{-\frac{1}{4}}\chi_{X}\right\|_{L^{2}_{\psi}}\left\|\phi_{\psi\psi}\psi^{\frac{1}{4}}\chi_{X}\right\|_{L^{2}_{\psi}}\\
&\lesssim (X+1)^{\frac{1}{8}}\left\|\phi_{\psi}\psi^{-\frac{1}{4}}\chi_{X}\right\|_{L^{2}_{\psi}}\left\|\phi_{\psi\psi}\sqrt{u}\right\|_{L^{2}_{\psi}}\\
&\leq (X+1)^{\frac{1}{8}}\left\|\phi_{\psi}\psi^{-\frac{1}{4}}\chi_{X}\right\|_{L^{2}_{\psi}}\left(\left\|\frac{\phi_{X}}{\sqrt{u}}\right\|_{L^{2}_{\psi}}+\left\|\frac{A\phi}{\sqrt{u}}\right\|_{L^{2}_{\psi}}\right)\\
&\lesssim (X+1)^{\frac{1}{8}}\left\|\phi_{\psi}\psi^{-\frac{1}{4}}\chi_{X}\right\|_{L^{2}_{\psi}}\left\|\frac{\phi_{X}}{\sqrt{u}}\right\|_{L^{2}_{\psi}}+(X+1)^{-\frac{7}{8}}\left\|\phi_{\psi}\psi^{-\frac{1}{4}}\chi_{X}\right\|_{L^{2}_{\psi}}\left\|\frac{\phi}{\sqrt{u}}\right\|_{L^{2}_{\psi}}.
\end{aligned}
\end{equation*}
Applying Hardy's inequality (Lemma \ref{sobolev}), we have
\begin{equation}\label{decay2}
\left\|\phi_{\psi}\psi^{-\frac{1}{4}}\chi_{X}\right\|_{L^{2}_{\psi}}\lesssim \left\|\phi_{\psi\psi}\psi^{\frac{3}{4}}\chi_{X}\right\|_{L^{2}_{\psi}}+\left\|\phi_{\psi}\psi^{\frac{3}{4}}\p_{\psi}\chi_{X}\right\|_{L^{2}_{\psi}}.
\end{equation}
Clearly,
\begin{equation*}
\eqref{decay2}_{2}\lesssim (X+1)^{-\frac{1}{2}}\left\|\phi_{\psi}\psi^{\frac{3}{4}}\right\|_{L^{2}_{\psi\leq\sqrt{X+1}}}\lesssim (X+1)^{-\frac{1}{8}}\|\phi_{\psi}\|_{L^{2}_{\psi}}.
\end{equation*}
By the equation \eqref{eqphi},
\begin{equation*}
\eqref{decay2}_{1}\lesssim (X+1)^{\frac{3}{8}}\left\|\phi_{X}+A\phi \right\|_{L^{2}_{\psi}}\lesssim (X+1)^{\frac{3}{8}}\|\phi_{X}\|_{L^{2}_{\psi}}+(X+1)^{-\frac{5}{8}}\|\phi\|_{L^{2}_{\psi}}.
\end{equation*}
Therefore, we finish the proof of Lemma \ref{lm62}.
\end{proof}
\begin{lemma}\label{lm63}
Under the assumptions in Theorem \ref{iyer}, the global solution $\phi$ in space $E_{\leq 2}$ satisfies the decay rates \eqref{iyerdecay}.
\end{lemma}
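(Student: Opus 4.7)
The plan is to combine the uniform weighted $L^{2}$ bound from Lemma \ref{lm61} --- namely $\|\phi(1+\psi^{1/2-\mu})/\sqrt{u}\|_{L^{2}_{\psi}} \lesssim \kappa$ --- with a Nash-type interpolation inequality and the energy identity derived from testing \eqref{eqphi} against $\phi/u$. The weighted bound plays the role of Nash's $L^{1}$ conservation: it controls the low-frequency part of $\phi$ and enables us to close a differential inequality of the form $\mathcal{A}'(X) + C\,\mathcal{A}^{1+\sigma}(X) \lesssim \mathcal{E}(X)$ for $\mathcal{A}(X) = \int_{0}^{\infty} \phi^{2}/u\, d\psi$, which is then iterated to reach the stated decay rates.

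First I would derive the interpolation inequality. Since $\phi(X,0)=0$, splitting at a radius $r>0$ and using Hardy's inequality for $\psi \leq r$ together with the weighted bound for $\psi \geq r$ gives
\begin{equation*}
\|\phi\|_{L^{2}_{\psi}}^{2} \lesssim r^{2}\|\phi_{\psi}\|_{L^{2}_{\psi}}^{2} + r^{-(1-2\mu)}\|\phi\,\psi^{1/2-\mu}\|_{L^{2}_{\psi}}^{2}.
\end{equation*}
Optimising in $r$ yields $\|\phi\|_{L^{2}_{\psi}} \lesssim \|\phi\,\psi^{1/2-\mu}\|_{L^{2}_{\psi}}^{2/(3-2\mu)} \|\phi_{\psi}\|_{L^{2}_{\psi}}^{(1-2\mu)/(3-2\mu)}$, and converting between the plain weight and the $1/u$ weight via Lemma \ref{Lm3} costs only a harmless polynomial factor of $(X+1)$. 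The $E_{0}$ bound from Lemma \ref{lm61} then reduces this to $\mathcal{A}(X) \lesssim (X+1)^{\varepsilon}\,\|\phi_{\psi}\|_{L^{2}_{\psi}}^{2(1-2\mu)/(3-2\mu)}$ with small $\varepsilon$ depending on $\theta,\mu$.

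Testing \eqref{eqphi} against $\phi/u$ (compare with \eqref{test}--\eqref{energy}) and exploiting the Blasius identity $-\bar{u}_{yy}+\bar{u}\bar{u}_{X}=0$ to cancel the leading part of $A/u+u_{X}/(2u^{2})$, I obtain
\begin{equation*}
\mathcal{A}'(X) + \int_{0}^{\infty}|\phi_{\psi}|^{2}\, d\psi \;\lesssim\; \mathcal{E}(X),
\end{equation*}
where $\mathcal{E}(X)$ is a cubic remainder in $\phi,\phi_{\psi},\phi_{X}$ controlled by the $E_{\leq 2}$ norm together with the pointwise bound in Lemma \ref{lm62}. Substituting the interpolation produces $\mathcal{A}'(X) + C(X+1)^{-\varepsilon'}\mathcal{A}(X)^{(3-2\mu)/(1-2\mu)} \lesssim \mathcal{E}(X)$, and the barrier/contradiction scheme already used in the first iteration of Lemma \ref{L2decay} turns this into $\mathcal{A}(X) \lesssim (X+1)^{-(1/2-2\delta)}$, yielding the first line of \eqref{iyerdecay}. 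For $\phi_{\psi}$ and $\phi_{X}$ I would then repeat the argument with the time-weighted tests $\phi(X+1)^{1/2-2\delta}/u$ and $\phi_{X}(X+1)^{1-2\delta}/u$ --- paralleling Lemma \ref{Middle} and Lemma \ref{H1decay} --- to transfer the decay from $\phi$ to $\phi_{\psi}$ and finally to $\phi_{X\psi}$; the $L^{\infty}$ bounds in \eqref{iyerdecay} follow by Agmon's inequality for $\psi \geq \sqrt{X+1}$ and by Lemma \ref{lm62} in the near-boundary region $\psi \leq \sqrt{X+1}$.

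The main obstacle is handling the nonlinear remainder $\mathcal{E}(X)$. At the outset one only knows $\|\phi\|_{E_{\leq 2}} \lesssim \kappa$ uniformly in $X$, so $\mathcal{E}(X)$ carries no explicit decay in $X$ and the ODE is too weak to yield the stated exponents in one shot. The remedy is \emph{iteration}, exactly in the spirit of Section \ref{enhanced}: a first crude decay rate for $\mathcal{A}$ improves the time integrability of $\mathcal{E}$ (its cubic structure trades three factors of $\phi$, each decaying, for a larger power of $(X+1)^{-1}$), which when reinserted into the differential inequality sharpens the decay further; after finitely many such passes the target exponents $1/4-\delta$, $3/4-\delta$, $5/4-\delta$ are reached. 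The $\delta$-loss corresponds precisely to the small parameters $\theta,\mu$ hard-wired into the $E_{0},E_{1},E_{2}$ norms and accounts for the prescribed $\delta = 1/100$.
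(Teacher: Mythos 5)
Your proposal follows the paper's approach essentially step for step: the interpolation inequality obtained by splitting at a radius $r$ and invoking Hardy plus the $E_{0}$ bound is precisely the paper's \eqref{decay5}, the energy identity from testing with $\phi/u$ and cancelling $-\bar{u}_{yy}+\bar{u}\bar{u}_{X}$ matches \eqref{decay3}, and the cubic-remainder iteration to upgrade the decay of $\mathcal{E}(X)$ is exactly how the paper bootstraps from the crude $E_{\leq 2}$ bound to $\mathcal{A}(X)\lesssim(X+1)^{-(1/2-\mu)}$ in two passes before moving to the $\phi_{X}$ equation. The only slip is in the temporal weight exponents: to reach the stated rates for $\phi_{X}$ and $\phi_{X\psi}$ the paper tests the $\phi_{X}$ equation \eqref{decay12} against $\phi_{X}(X+1)^{2-2\theta}/u$ and then $\phi_{X}(X+1)^{5/2-2(\theta+\mu)}/u$, whereas your proposed weights $(X+1)^{1/2-2\delta}$ and $(X+1)^{1-2\delta}$ are the target \emph{decay} exponents rather than the test-function weights and are too small by roughly a factor of two; with the correct weights your outline closes exactly as in the paper.
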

\begin{proof}
\textit{Step 1.} First we derive the decay rates of $\|\phi\|_{L^{2}_{\psi}}$ and $\|\phi_{\psi}\|_{L^{2}_{\psi}}$. We are going to prove that
\begin{equation*}
\left\|\frac{\phi}{\sqrt{u}}\right\|_{L^{2}_{\psi}}^{2}\lesssim (X+1)^{-\left(\frac{1}{2}-\mu\right)},\quad \|\phi_{\psi}\|_{L^{2}_{\psi}}^{2}\lesssim (X+1)^{-\left(\frac{3}{2}-\mu\right)}.
\end{equation*}

Testing the equation \eqref{eqphi} by $\phi\frac{1}{u}$, we have:
\begin{equation}\label{decay3}
\frac{\p_{x}}{2}\left(\int_{0}^{\infty} \phi^{2}\frac{1}{u}d\psi\right)+\int_{0}^{\infty}|\phi_{\psi}|^{2}d\psi\leq \sum_{j=1}^{3}\int_{0}^{\infty}\phi^{2}\frac{1}{u^{3}}|\mathcal{R}_{j}|d\psi.
\end{equation}
Recall the definition of $\mathcal{R}_{j}$ from \eqref{R},
\begin{equation}\label{decay4}
\sum_{j=1}^{3}|\mathcal{R}_{j}|\lesssim (X+1)^{-1}|\phi|+|\phi_{X}|.
\end{equation}
In the interval $\psi\geq\sqrt{X+1}$, applying Lemma \ref{lm61}, we have
\begin{equation*}
\begin{aligned}
\sum_{j=1}^{3}\int_{\psi\geq \sqrt{X+1}}\phi^{2}\frac{1}{u^{3}}|\mathcal{R}_{j}|d\psi&\lesssim (X+1)^{-1}\int |\phi|^{3}+\int |\phi|^{2}|\phi_{X}|\\
&\lesssim (X+1)^{-1}\|\phi\|_{L^{2}_{\psi}}^{2}\|\phi\|_{L^{\infty}_{\psi}}+\|\phi_{X}\|_{L^{2}_{\psi}}\|\phi\|_{L^{2}_{\psi}}\|\phi\|_{L^{\infty}_{\psi}}\\
&\lesssim (X+1)^{-\left(\frac{5}{4}-\frac{3\theta}{2}\right)}.
\end{aligned}
\end{equation*}
In the interval $\psi\leq\sqrt{X+1}$, applying Lemma \ref{lm61} and Sobolev embedding $\dot{H}^{1}_{\psi}\to \dot{C}^{\frac{1}{2}}_{\psi}$, we have 
\begin{equation*}
\begin{aligned}
(X+1)^{-1}\int_{\psi\leq\sqrt{X+1}}|\phi|^{3}\frac{1}{u^{3}}d\psi&\lesssim (X+1)^{-\frac{1}{4}}\int_{\psi\leq\sqrt{X+1}}\frac{|\phi|^{3}}{\psi^{\frac{3}{2}}}d\psi\\
&\lesssim (X+1)^{\frac{1}{4}}\|\phi_{\psi}\|_{L^{2}_{\psi}}^{3}\\
&\lesssim (X+1)^{-\left(\frac{5}{4}-3\theta\right)},
\end{aligned}
\end{equation*}
and
\begin{equation*}
\begin{aligned}
\int_{\psi\leq\sqrt{X+1}}|\phi|^{2}|\phi_{X}|\frac{1}{u^{3}}d\psi&\lesssim \left\|\frac{\phi_{X}}{\sqrt{u}}\right\|_{L^{2}_{\psi}}\left(\int_{\psi\leq\sqrt{X+1}}|\phi|^{4}\frac{1}{u^{5}}d\psi\right)^{\frac{1}{2}}\\
&\lesssim (X+1)^{-(1-\theta)}(X+1)^{\frac{5}{8}}\left(\int_{\psi\leq\sqrt{X+1}}|\phi|^{4}\frac{1}{\psi^{\frac{5}{2}}}d\psi\right)^{\frac{1}{2}}\\
&\lesssim (X+1)^{-\left(\frac{1}{4}-\theta\right)}\|\phi_{\psi}\|_{L^{2}_{\psi}}^{2}\\
&\lesssim (X+1)^{-\left(\frac{5}{4}-3\theta\right)}.
\end{aligned}
\end{equation*}
Similar to \eqref{onee} and \eqref{one}, we can derive the following interpolation inequality:
\begin{equation}\label{decay5}
\begin{aligned}
\int_{0}^{\infty}\frac{|\phi|^{2}}{\psi^{\frac{1}{2}}}d\psi&\lesssim \left(\int_{0}^{\infty}|\phi_{\psi}|^{2}d\psi\right)^{\frac{3-4\mu}{6-4\mu}}\left(\int_{0}^{\infty}|\phi|^{2}\psi^{1-2\mu}d\psi\right)^{\frac{3}{6-4\mu}};\\
\int_{0}^{\infty}|\phi|^{2}d\psi&\lesssim \left(\int_{0}^{\infty}|\phi_{\psi}|^{2}d\psi\right)^{\frac{1-2\mu}{3-2\mu}}\left(\int_{0}^{\infty}|\phi|^{2}\psi^{1-2\mu}d\psi\right)^{\frac{2}{3-2\mu}}.
\end{aligned}
\end{equation}
Applying Lemma \ref{lm61},we obtain
\begin{equation}\label{decay6}
\mathcal{A}(X):=\int_{0}^{\infty}|\phi|^{2}\frac{1}{u}d\psi\lesssim \max\left\{(X+1)^{\frac{1}{4}}\|\phi_{\psi}\|_{L^{2}_{\psi}}^{\frac{3-4\mu}{3-2\mu}},\ \|\phi_{\psi}\|_{L^{2}_{\psi}}^{\frac{2-4\mu}{3-2\mu}}\right\}.
\end{equation}
Combining with \eqref{decay3}, we obtain the following differential inequality,
\begin{equation}\label{decay7}
\mathcal{A}'(X)+K_{0}\min\left\{(X+1)^{-\frac{3-2\mu}{6-8\mu}}\mathcal{A}(X)^{\frac{6-4\mu}{3-4\mu}},\ \mathcal{A}(X)^{\frac{3-2\mu}{1-2\mu}}\right\}\leq K_{1}(X+1)^{-\left(\frac{5}{4}-3\theta\right)},
\end{equation}
for some $K_{0},K_{1}>0$.

Similar to the proof of Lemma \ref{L2decay}, \eqref{decay7} implies that
\begin{equation}\label{decay8}
\mathcal{A}(X)\lesssim (X+1)^{-\frac{5}{16}}.
\end{equation}
Proceeding in the same way as Lemma \ref{Middle} and Lemma \ref{H1decay}, we obtain
\begin{equation}\label{decay9}
\int_{0}^{\infty}|\phi_{\psi}(X,\psi)|^{2}d\psi\lesssim (X+1)^{-\frac{21}{16}}.
\end{equation}

Although the decay rates \eqref{decay8} and \eqref{decay9} do not attain our expectations, they improve the decay rate of the right-hand side of the energy inequality \eqref{decay3}. Indeed, applying \eqref{decay8}, \eqref{decay9}, and Lemma \ref{lm61}, we have
\begin{equation*}
\begin{aligned}
\int_{\psi\geq\sqrt{X+1}}|\phi|^{3}\frac{1}{u^{3}}&\lesssim \|\phi\|_{L^{2}_{\psi}}^{2}\|\phi\|_{L^{\infty}_{\psi}}\lesssim (X+1)^{-\frac{23}{32}};\\
\int_{\psi\leq\sqrt{X+1}}|\phi|^{3}\frac{1}{u^{3}}&\lesssim (X+1)^{\frac{5}{4}}\|\phi_{\psi}\|_{L^{2}_{\psi}}^{3}\lesssim (X+1)^{-\frac{23}{32}};\\
\int_{\psi\geq\sqrt{X+1}}|\phi|^{2}|\phi_{X}|\frac{1}{u^{3}}&\lesssim \|\phi_{X}\|_{L^{2}_{\psi}}\|\phi\|_{L^{2}_{\psi}}\|\phi\|_{L^{\infty}_{\psi}}\lesssim (X+1)^{-\left(\frac{25}{16}-\theta\right)};\\
\int_{\psi\leq\sqrt{X+1}}|\phi|^{2}|\phi_{X}|\frac{1}{u^{3}}&\lesssim (X+1)^{-\left(\frac{1}{4}-\theta\right)}\|\phi_{\psi}\|_{L^{2}_{\psi}}^{2}\lesssim (X+1)^{-\left(\frac{25}{16}-\theta\right)}.
\end{aligned}
\end{equation*}
Recall \eqref{decay3}, we obtain the following differential inequality,
\begin{equation}\label{decay10}
\mathcal{A}'(X)+K_{2}\min\left\{(X+1)^{-\frac{3-2\mu}{6-8\mu}}\mathcal{A}(X)^{\frac{6-4\mu}{3-4\mu}},\ \mathcal{A}(X)^{\frac{3-2\mu}{1-2\mu}}\right\}\leq K_{3}(X+1)^{-\left(\frac{25}{16}-\theta\right)}.
\end{equation}
Similar to the previous discussions, \eqref{decay10} implies that
\begin{equation}\label{decay11}
\mathcal{A}(X):=\int_{0}^{\infty}|\phi|^{2}\frac{1}{u}d\psi\lesssim (X+1)^{-\left(\frac{1}{2}-\mu\right)}.
\end{equation}
Therefore, 
\begin{equation}\label{decay111}
\|\phi_{\psi}\|_{L^{2}_{\psi}}^{2}\lesssim (X+1)^{-\left(\frac{3}{2}-\mu\right)}.
\end{equation}

\textit{Step 2.} Next, we derive the decay rates of $\|\phi_{X}\|_{L^{2}_{\psi}}$ and $\|\phi_{X\psi}\|_{L^{2}_{\psi}}$. We will show that:
\begin{equation*}
\left\|\frac{\phi_{X}}{\sqrt{u}}\right\|_{L^{2}_{\psi}}^{2}\lesssim (X+1)^{-\left(\frac{5}{2}-2\theta-\mu\right)},\quad \left\|\phi_{X\psi}(X+1)^{\frac{5}{4}-(\theta+\mu)}\right\|_{L^{2}_{X,\psi}}\lesssim 1.
\end{equation*}
Recall that $\phi_{X}$ satisfies the following equation:
\begin{equation}\label{decay12}
\p_{X}\phi_{X}-u\p_{\psi}^{2}\phi_{X}+A\phi_{X}-u_{X}\phi_{\psi\psi}+A_{X}\phi=0.
\end{equation}
We also define
\begin{equation*}
\mathcal{Q}(X):=\int_{0}^{\infty}|\phi_{X}|^{2}\frac{1}{u}d\psi(X+1)^{2-2\theta}.
\end{equation*}
Testing \eqref{decay12} by 
\begin{equation*}
\phi_{X}\frac{1}{u}(X+1)^{2-2\theta},
\end{equation*}
we obtain the following energy identity:
\begin{equation}\label{decay13}
\begin{aligned}
0 & = \int \Big(\partial_{X}\phi_{X} - u\partial_{\psi\psi}\phi_{X} + A\phi_{X} - u_{X}\phi_{\psi\psi} + A_{X}\phi\Big)\phi_{X}\frac{1}{u}d\psi (X+1)^{(2-2\theta)}\\
& = \int \phi_{XX}\phi_{X}\frac{1}{u}d\psi (X+1)^{(2-2\theta)} - \int \phi_{X\psi\psi}\phi_{X}d\psi (X+1)^{(2-2\theta)}\\
& \quad + \int A|\phi_{X}|^{2}\frac{1}{u}d\psi (X+1)^{(2-2\theta)} - \int u_{X}\phi_{\psi\psi}\phi_{X}\frac{1}{u}d\psi (X+1)^{(2-2\theta)}\\
& \quad + \int A_{X}\phi\phi_{X}\frac{1}{u}d\psi (X+1)^{(2-2\theta)} =: I_{1} + I_{2} + I_{3} + I_{4} + I_{5}.
\end{aligned}
\end{equation}

\textit{Estimate of $I_{1}$.}
For the term $I_{1}$, we have
\begin{equation}\label{decay14}
\begin{aligned}
I_{1}&=\frac{\p_{X}}{2}\left(\int_{0}^{\infty}|\phi_{X}|^{2}\frac{1}{u}d\psi (X+1)^{2-2\theta}\right)\\
&+\int_{0}^{\infty}|\phi_{X}|^{2}\frac{u_{X}}{2u^{2}}d\psi (X+1)^{2-2\theta}-(1-\theta)\int_{0}^{\infty}|\phi_{X}|^{2}\frac{1}{u}d\psi(X+1)^{1-2\theta}.
\end{aligned}
\end{equation}

\textit{Estimate of $I_{2}$.}
Integrating by parts, we have
\begin{equation}\label{decay15}
I_{2}=\int_{0}^{\infty}|\phi_{X\psi}|^{2}d\psi(X+1)^{2-2\theta}.
\end{equation}

\textit{Estimate of $I_{3}$.}
Similar to the proof of Lemma \ref{lm61}, the term $I_{3}$ should be estimated together with the second term of \eqref{decay14},
\begin{equation}\label{decay16}
\begin{aligned}
&|I_{3}+\eqref{decay14}_{2}|\\
&\leq \sum_{j=1}^{3}\int_{0}^{\infty} |\phi_{X}|^{2}\frac{1}{u^{3}}|\mathcal{R}_{j}|d\psi (X+1)^{2-2\theta}\\
&\lesssim_{\eqref{decay4}} \int_{0}^{\infty}|\phi_{X}|^{2}|\phi|\frac{1}{u^{3}}d\psi(X+1)^{1-2\theta}+\int_{0}^{\infty}|\phi_{X}|^{3}\frac{1}{u^{3}}d\psi(X+1)^{2-2\theta}\\
&= \int_{\psi\geq\sqrt{X+1}}|\phi_{X}|^{2}|\phi|\frac{1}{u^{3}}d\psi(X+1)^{1-2\theta}+ \int_{\psi\leq\sqrt{X+1}}|\phi_{X}|^{2}|\phi|\frac{1}{u^{3}}d\psi(X+1)^{1-2\theta}\\
&\quad +\int_{\psi\geq\sqrt{X+1}}|\phi_{X}|^{3}\frac{1}{u^{3}}d\psi(X+1)^{2-2\theta}+\int_{\psi\leq\sqrt{X+1}}|\phi_{X}|^{3}\frac{1}{u^{3}}d\psi(X+1)^{2-2\theta}\\
&=:M_{1}+M_{2}+M_{3}+M_{4}.
\end{aligned}
\end{equation}
By \eqref{decay11} and \eqref{decay111}, we have
\begin{equation*}
M_{1}\leq (X+1)^{1-2\theta}\|\phi\|_{L^{\infty}_{\psi}}\|\phi_{X}\|_{L^{2}_{\psi}}^{2}\lesssim (X+1)^{-\left(\frac{3}{2}-\frac{\mu}{2}\right)}\mathcal{Q}(X).
\end{equation*}
For the term $M_{2}$, applying Lemma \ref{lm62}, we obtain
\begin{equation*}
\begin{aligned}
M_{2}&\leq (X+1)^{-1}\left\|\frac{\phi}{u^{2}}\right\|_{L^{\infty}_{\psi\leq\sqrt{X+1}}}\mathcal{Q}(X)\\
&\lesssim (X+1)^{-\frac{1}{2}}\|\phi_{\psi}\|_{L^{\infty}_{\psi\leq\sqrt{X+1}}}\mathcal{Q}(X)\\
&\lesssim (X+1)^{-\left(\frac{5}{4}-\theta\right)}\mathcal{Q}(X).
\end{aligned}
\end{equation*}
To estimate $M_{3}$, by Young's inequality, we have
\begin{equation*}
\begin{aligned}
M_{3}&\lesssim (X+1)^{2-2\theta}\|\phi_{X}\|_{L^{2}_{\psi}}^{\frac{5}{2}}\|\phi_{X\psi}\|_{L^{2}_{\psi}}^{\frac{1}{2}}\\
&\leq \frac{1}{2}(X+1)^{2-2\theta}\|\phi_{X\psi}\|_{L^{2}_{\psi}}^{2}+C_{1} (X+1)^{2-2\theta}\|\phi_{X}\|_{L^{2}_{\psi}}^{\frac{10}{3}}\\
&\leq \frac{1}{2}(X+1)^{2-2\theta}\|\phi_{X\psi}\|_{L^{2}_{\psi}}^{2}+C_{2}(X+1)^{-\frac{4}{3}(1-\theta)}\mathcal{Q}(X).
\end{aligned}
\end{equation*}
for some $C_{1},C_{2}>0$.
For the term $M_{4}$, by Cauchy-Schwarz inequality and Lemma \ref{lm61},
\begin{equation*}
\begin{aligned}
M_{4}&\lesssim (X+1)^{\frac{11}{4}-2\theta}\int_{\psi\leq\sqrt{X+1}}\frac{|\phi_{X}|^{3}}{\psi^{\frac{3}{2}}}d\psi\\
&\leq (X+1)^{\frac{11}{4}-2\theta}\left(\int \frac{|\phi_{X}|^{2}}{\psi^{2}}\right)^{\frac{1}{2}}\left(\int \frac{|\phi_{X}|^{4}}{\psi}\right)^{\frac{1}{2}}\\
&\lesssim (X+1)^{\frac{11}{4}-2\theta}\|\phi_{X}\|_{L^{2}_{\psi}}\|\phi_{X\psi}\|_{L^{2}_{\psi}}^{2}\\
&\lesssim \|\phi_{0}\|_{E_{\text{initial}}}(X+1)^{2-2\theta}\|\phi_{X\psi}\|_{L^{2}_{\psi}}^{2}.
\end{aligned}
\end{equation*}
Therefore, $M_{4}$ is absorbed by $I_{2}$ term provided $\kappa>0$ is sufficiently small.

\textit{Estimate of $I_{4}$.}
Recall that
\begin{equation}\label{decay17}
\begin{aligned}
I_{4}&=-\int_{0}^{\infty}u_{X}\phi_{\psi\psi}\phi_{X}\frac{1}{u}d\psi (X+1)^{2-2\theta}\\
&=-\int \bar{u}_{X}\phi_{\psi\psi}\phi_{X}\frac{1}{u}(X+1)^{2-2\theta}-\int \rho_{X}\phi_{\psi\psi}\phi_{X}\frac{1}{u}(X+1)^{2-2\theta}\\
&=:I_{41}+I_{42}.
\end{aligned}
\end{equation}
By the equation \eqref{eqphi}, the estimation of $I_{42}$ is similar to $I_{3}$. For the term $I_{41}$, by the properties of Blasius profile,
\begin{equation*}
I_{41}=-\int \bar{u}_{X}\phi_{X}\left(\frac{\phi_{X}+A\phi}{u^{2}}\right)d\psi (X+1)^{2-2\theta}\geq -\int A\phi\phi_{X}\frac{\bar{u}_{X}}{u^{2}}(X+1)^{2-2\theta}.
\end{equation*}
Applying Cauchy-Schwarz inequality and \eqref{decay11}, we have
\begin{align*}
\left|\int A\phi\phi_{X}\frac{\bar{u}_{X}}{u^{2}}(X+1)^{2-2\theta}\right|&\lesssim (X+1)^{-2\theta}\left\|\frac{\phi}{\sqrt{u}}\right\|_{L^{2}_{\psi}}\left\|\frac{\phi_{X}}{\sqrt{u}}\right\|_{L^{2}_{\psi}}\\
&\leq (X+1)^{-(1+2\theta)}\left\|\frac{\phi}{\sqrt{u}}\right\|_{L^{2}_{\psi}}^{2}+\frac{\mathcal{Q}(X)}{X+1}\\
&\lesssim (X+1)^{-\frac{3}{2}}+\frac{\mathcal{Q}(X)}{X+1}.
\end{align*}

\textit{Estimate of $I_{5}$.}Utilizing the weighted embedding inequality \eqref{techweight}, we have
\begin{equation}\label{decay18}
\begin{aligned}
|I_{5}|&\leq \left\|u^{\frac{3}{2}}A_{X}\right\|_{L^{2}_{\psi}}\left\|\frac{\phi\phi_{X}}{u^{\frac{5}{2}}}\right\|_{L^{2}_{\psi}}(X+1)^{2-2\theta}\lesssim \left\|\frac{\phi\phi_{X}}{u^{\frac{5}{2}}}\right\|_{L^{2}_{\psi}}(X+1)^{\frac{1}{4}-2\theta}\\
&\leq \left\|\frac{\phi}{u^{2}}\right\|_{L^{\infty}_{\psi}}\left\|\frac{\phi_{X}}{\sqrt{u}}\right\|_{L^{2}_{\psi}}(X+1)^{\frac{1}{4}-2\theta}=\left\|\frac{\phi}{u^{2}}\right\|_{L^{\infty}_{\psi}}\sqrt{\mathcal{Q}(X)}(X+1)^{-\left(\frac{3}{4}+\theta\right)}\\
&\leq \left\|\frac{\phi}{u^{2}}\right\|_{L^{\infty}_{\psi}}^{2}(X+1)^{-\left(\frac{1}{2}+2\theta\right)}+\frac{\mathcal{Q}(X)}{X+1}.
\end{aligned}
\end{equation}
Clearly,
\begin{equation*}
 \left\|\frac{\phi}{u^{2}}\right\|_{L^{\infty}_{\psi\geq\sqrt{X+1}}}\lesssim \|\phi\|_{L^{\infty}}\lesssim (X+1)^{-\left(\frac{1}{2}-\frac{\mu}{2}\right)}.
 \end{equation*}
 By applying Lemma \ref{lm62},
 \begin{equation*}
 \begin{aligned}
 \left\|\frac{\phi}{u^{2}}\right\|_{L^{\infty}_{\psi\leq\sqrt{X+1}}}^{2}&\lesssim (X+1)\|\phi_{\psi}\|_{L^{\infty}_{\psi\leq\sqrt{X+1}}}\\
 &\lesssim (X+1)^{\mu-1}+(X+1)^{\left(\frac{1}{4}+\frac{\mu}{2}\right)}\left\|\frac{\phi_{X}}{\sqrt{u}}\right\|_{L^{2}_{\psi}}+(X+1)^{\frac{3}{2}}\left\|\frac{\phi_{X}}{\sqrt{u}}\right\|_{L^{2}_{\psi}}^{2}\\
 &=(X+1)^{\mu-1}+(X+1)^{\frac{\mu}{2}+\theta-\frac{3}{4}}\sqrt{\mathcal{Q}(X)}+(X+1)^{2\theta-\frac{1}{2}}\mathcal{Q}(X).
 \end{aligned}
 \end{equation*}
 Therefore,
 \begin{equation*}
 \begin{aligned}
 |I_{5}|&\lesssim (X+1)^{-\left(\frac{3}{2}+2\theta-\mu\right)}+(X+1)^{-\left(\frac{5}{4}+\theta-\frac{\mu}{2}\right)}\sqrt{\mathcal{Q}(X)}+\frac{\mathcal{Q}(X)}{X+1}\\
 &\lesssim (X+1)^{-\frac{3}{2}}+\frac{\mathcal{Q}(X)}{X+1}.
 \end{aligned}
 \end{equation*}
 Combining \eqref{decay13} --- \eqref{decay18}, and the interpolation inequality \eqref{decay6}, we obtain the following differential inequality for $\mathcal{Q}(X)$:
 \begin{equation}\label{decay19}
 \mathcal{Q}'(X)+C_{3}\min\left\{(X+1)^{-\frac{3-2\mu}{6-8\mu}}\mathcal{Q}(X)^{\frac{6-4\mu}{3-4\mu}},\ \mathcal{Q}(X)^{\frac{3-2\mu}{1-2\mu}}\right\}\leq C_{4}\left[(X+1)^{-\frac{3}{2}}+\frac{\mathcal{Q}(X)}{X+1}\right],
 \end{equation}
 which implies that
 \begin{equation}\label{decay20}
 \mathcal{Q}(X)\lesssim (X+1)^{\mu-\frac{1}{2}}.
 \end{equation}
 Finally, testing \eqref{decay12} by
 \begin{equation*}
\phi_{X}\frac{1}{u}(X+1)^{\frac{5}{2}-2(\theta+\mu)},
\end{equation*}
and calculate the same as \eqref{decay13}-\eqref{decay18}, we obtain
\begin{equation}\label{decay21}
\int_{0}^{X_{0}}\int_{0}^{\infty}|\phi_{X\psi}|^{2}(X+1)^{\frac{5}{2}-2(\theta+\mu)}d\psi dX\lesssim 1,\quad \forall X_{0}>0.
\end{equation}
Hence we finish the proof of Lemma \ref{lm63}.
\end{proof}
	\frenchspacing
	\bibliographystyle{plain}
	\bibliography{Prandtl_Equation}	
\end{document}